\documentclass[twocolumn]{autart}    

\usepackage{amssymb}                                                       
\usepackage{ifpdf}
\usepackage{amsmath}
\usepackage{amsfonts}
\usepackage{mathrsfs}
\usepackage{color}
\usepackage{graphicx}
\usepackage{subfigure}
\usepackage{epstopdf}
\usepackage{lineno}
\usepackage{cite}
\usepackage{enumerate}
\usepackage{algorithm}
\usepackage{algpseudocode}
\usepackage{bm}

\newcommand{\order}[1]{\mathcal{O}\left(#1\right)}

\newcommand{\prt}[1]{\left(#1\right)}
\newcommand{\brk}[1]{\left[#1\right]}
\newcommand{\crk}[1]{\left\{#1\right\}}

\newcommand{\norm}[1]{\left\Vert #1 \right\Vert}

\newtheorem{theorem}{Theorem}

\newtheorem{corollary}{Corollary}

\newtheorem{lemma}{Lemma}     
\newtheorem{remark}{Remark}
\newtheorem{proposition}{Proposition}
\newtheorem{assumption}{Assumption}

\newcommand{\1}{\mathbf{1}}
\newcommand{\T}{\intercal}

\definecolor{cuhkpl}{RGB}{152,24,147}
\definecolor{revisiongray}{gray}{0.55}

\allowdisplaybreaks

\begin{document}

\begin{frontmatter}

\title{Distributed risk-averse optimization via CVaR\thanksref{footnoteinfo}} 

\thanks[footnoteinfo]{This work was supported by the Swedish Research Council Distinguished Professor Grant 2017-01078, Knut and Alice Wallenberg Foundation, Wallenberg Scholar Grant, and Swedish Strategic Research Foundation SUCCESS Grant FUS21-0026.}

\author[DCS]{Siyi Wang}\ead{siyiw@kth.se},    
\author[DCS]{Kun Huang}\ead{kunhuang@kth.se},               
\author[DCS]{Lei Xu}\ead{lei5@kth.se},  
\author[DCS]{Karl H. Johansson}\ead{kallej@kth.se}

\address[DCS]{Department of Decision and Control Systems, School of Electrical Engineering and Computer Science, KTH Royal Institute of Technology,
10044  Stockholm, Sweden}

\begin{keyword}                           
Conditional value at risk, 
Distributed optimization,
Zeroth-order optimization 
\end{keyword}                             

\begin{abstract}
Distributed systems often operate under uncertainty, where minimizing expected loss may overlook rare but severe events.
This paper studies a distributed risk-averse convex optimization problem in which agents cooperatively minimize the average of local conditional value-at-risk (CVaR) objectives over a time-varying network. 
Each agent has access only to noisy evaluations of its local loss function, 
rather than to its CVaR objective or gradient.
We therefore develop a zeroth-order algorithm that uses 
sampled losses to construct empirical CVaR estimates and their gradient estimates.
At each iteration, agents combine neighboring decisions and perform a local update. 
Under convexity and Lipschitz continuity assumptions, 
we prove that the agents reach exact asymptotic consensus. 
We also establish a finite-time expected suboptimality bound for the weighted ergodic iterate.  
With diminishing step sizes and fixed sample sizes, the local last iterates converge almost surely to a common optimum, and their limiting expected CVaR gap is bounded in terms of the smoothing and finite-sample errors.  
This distributed bound matches the parameter dependence of the centralized benchmark provided in this paper. 
Finally, simulations on a distributed sensor network estimation problem illustrate the efficacy of the method.
\end{abstract}

\end{frontmatter}

\section{Introduction}
\label{sec:0_intro}

Distributed optimization is a fundamental framework for decision-making in large-scale networked systems~\cite{nedic2018distributed}. In this framework, agents cooperatively minimize a global objective using local computation and information exchanged with their neighbors. Applications include wireless sensor network localization~\cite{simonetto2014distributed}, distributed estimation~\cite{rabbat2004distributed}, and multi-robot systems~\cite{shorinwa2024distributeda,shorinwa2024distributedb}.
In stochastic environments,  distributed optimization methods often
handle uncertainty by minimizing the average expected loss $J(x) = \sum_{i=1}^m\mathbb{E}_{\xi^i}[ J_i(x;\xi^i)]/m$ \cite{sundhar2010distributed,xie2025differentially}. 
Recent research has developed a broad range of distributed optimization methods under different problem and network settings. 
Examples include decentralized stochastic gradient methods with changing topologies and local 
updates~\cite{koloskova2020unified}, stochastic gradient tracking~\cite{pu2021distributed}, 
asynchronous schemes~\cite{even2024asynchronous}, 
and gradient-free algorithms based on local function 
evaluations~\cite{pang2022gradient,yi2022zeroth,tang2023zeroth,yuan2024distributed}. 
Convergence guarantees have been established for various objectives under different assumptions on local information and communication networks.

In high-stakes applications, such as autonomous navigation and distributed power dispatch, 
rare but severe events can have catastrophic consequences. 
For instance, a sensor anomaly may lead to fatal collisions in autonomous navigation, 
while a localized power surge can cascade into a grid-wide blackout \cite{basaran2026comprehensive}. 
Therefore, optimizing only the expected cost may leave these systems vulnerable to tail-risk events.
This motivates risk-averse distributed optimization that explicitly accounts for tail risk.
Among various risk measures, conditional value at risk (CVaR) has attracted significant attention due to its coherence and convexity.
CVaR captures tail risk by measuring the expected loss above a prescribed quantile. 
In this paper, we study a distributed stochastic convex optimization problem over networks, where agents collaboratively optimize a shared CVaR objective with respect to a global decision variable.
Specifically, consider a network of agents $ [m] := {1,\ldots,m} $ that collaboratively solve the optimization problem
\begin{equation}\label{eq problem}
\min_{x\in\mathcal{X}} \mathcal{C}(x)= 
\frac{1}{m}\sum_{i=1}^{m} C^i(x),
\end{equation}
where $x$ denotes the decision variable, $\mathcal{X}$ is the feasible set, and $C^i(\cdot)$ denotes the local CVaR objective induced by the
stochastic loss $J_i(x,\xi^i)$ of agent $i$.
 
CVaR has been studied in several multi-agent settings, 
including game theory \cite{wang2022risk,wang2024learning}, reinforcement learning \cite{al2024risk,ishikawa2025decentralized}, and multi-armed bandits \cite{tan2023cvar}. 
These works typically optimize individual utilities \cite{wang2022risk, wang2024learning}, policies, or sequential decisions rather than a shared stochastic convex objective over a communication network \cite{tan2023cvar}. 
Distributed risk-aware optimization has also been studied through distributionally robust formulations that minimize worst-case risk over an ambiguity set~\cite{jiao2022distributed,lan2023optimal,ning2026collaborative}. 
In contrast, the CVaR formulation considered here targets tail losses and admits a tractable implementation. 
The main challenge is that an agent generally knows neither the distribution of its local uncertainty nor an analytical expression for its CVaR objective, and hence cannot directly evaluate a CVaR gradient. 
To the best of our knowledge, no existing work has addressed distributed stochastic convex optimization of CVaR objectives over networks.

In this paper, we develop a zeroth-order distributed risk-averse learning algorithm for minimizing the average of local CVaR functions over time-varying networks.
At each iteration, every agent mixes the decisions received from its neighbors, perturbs the mixed point in a random direction, and queries multiple noisy values of its local loss. The resulting empirical CVaR value is used to construct a one-point estimate of the gradient of a smoothed local CVaR objective. The agent then performs a projected update.
Our analysis separates three sources of error: network disagreement, zeroth-order smoothing, and finite-sample CVaR estimation. Under standard convexity and Lipschitz assumptions and uniformly jointly connected, doubly stochastic communication graphs, we first derive a finite-time expected suboptimality bound for the weighted ergodic average.
For polynomially decaying step sizes, the limiting ergodic error is of order $\order{\delta_{\max} + e_s/\delta_{\min}}$.
Here, $\delta_{\max}$ and $\delta_{\min}$ are the largest and smallest smoothing radii, respectively, and $e_s$ quantifies the sampling accuracy.
We further prove exact asymptotic consensus. When each agent uses a fixed sample size, its last iterate converges almost surely to a common optimizer of a fixed convex surrogate, and the limiting expected CVaR gap is  $\order{\delta_{\max}+e_s/\delta_{\min}}$.
A centralized counterpart removes the network-disagreement term. Its upper bound has similar dependence on the horizon, smoothing radii, and sampling accuracy, but it does not contain the graph-dependent constants present in the distributed upper bound.

The main contributions of this paper are summarized as follows: 
\begin{enumerate}
    \item   We formulate a distributed risk-averse stochastic convex optimization problem with CVaR as the risk measure and develop a zeroth-order distributed algorithm for solving it. 
    \item Under standard convexity, Lipschitz continuity, and diminishing step size assumptions, we prove exact asymptotic consensus and characterize ergodic convergence. For fixed per-agent sample sizes, we further prove ordinary last-iterate convergence and bound the limiting expected CVaR gap by  $\order{\delta_{\max}+e_s/\delta_{\min}}$.
    \item We develop and analyze a centralized zeroth-order benchmark. The centralized and distributed upper bounds exhibit similar dependence on the iteration horizon, smoothing radii, and sampling accuracy.  
\end{enumerate}

The remainder of this paper is organized as follows. 
Section~\ref{sec:preliminary} introduces the CVaR and communication-graph preliminaries.
Section~\ref{sec:main result} develops the zeroth-order distributed risk-averse optimization algorithm. 
Section~\ref{sec:analysis} analyzes its convergence.
Section~\ref{sec:centralize} introduces and analyzes a centralized benchmark for comparison.
Section~\ref{sec:simulation} evaluates the proposed algorithm through numerical simulations.
Section~\ref{sec:5_conc} concludes the paper, 
and Section~\ref{sec:appendix} provides auxiliary results and complementary proofs.

\textbf{Notation:} The symbol $\|\cdot\|$ denotes the Euclidean norm. The notation $\mathcal O(\cdot)$ suppresses multiplicative constants, whereas $\widetilde{\mathcal O}(\cdot)$ additionally suppresses logarithmic factors.  
For two subsets $A$ and $B$ of a Euclidean space, their Minkowski sum is $A\oplus B:=\{a+b\mid a\in A,\ b\in B\}$.

\section{Preliminaries}\label{sec:preliminary}
Consider $m$ agents indexed by $[m]:=\{1,\ldots,m\}$. Agent $i$ has a stochastic loss $J^i:\mathcal X\times\Xi_i\to\mathbb R$, where $x\in\mathcal X$ is the decision variable, $\mathcal X\subset\mathbb R^d$ is nonempty, compact, and convex, and $\xi^i\in\Xi_i$ is the local uncertainty. 
We denote the diameter of $\mathcal X$ by $D_x:=\sup_{x,y\in\mathcal X}\|x-y\|$.
Additionally, we assume that $\mathcal{X}\subseteq \mathbb{R}^d $ contains the ball of radius $r>0$ centered at the origin, i.e.,
 $
r\mathbb{B} \subseteq \mathcal{X}$,
 where 
$\mathbb{B}:=\{x\in\mathbb{R}^d:\|x\|_2\le 1\}.$

\subsection{CVaR} 
We use CVaR as the risk measure. 
Given a risk level $\alpha_i\in(0,1)$, 
we define the local CVaR objective of agent $i$ through the Rockafellar--Uryasev representation \cite{rockafellar2000optimization}:
\begin{equation*}
    \begin{aligned}
       C^i(x) & :={\rm{CVaR}}_{\alpha_i}\left[J^i(x,\xi^i)\right]  \\
     & = \min_{\tau\in\mathbb{R}}
     \left\{\tau+\frac{1}{\alpha_i}
     \mathbb{E}\left[\left(J^i(x,\xi^i)-\tau\right)_+\right]\right\},
    \end{aligned}
\end{equation*}
where $(z)_+:=\max\{z,0\}$ denotes the positive part of $z$.

\subsection{Graph}
Consider a time-varying undirected graph $\mathcal{G}_k = (\mathcal{V}, \mathcal{E}_k)$, where $\mathcal{V} = \{1, \dots, m\}$ denotes the set of agents and $\mathcal{E}_k$ denotes the set of edges at iteration $k$. An edge $i \leftrightarrow j \in \mathcal{E}_k$ indicates that agents $i$ and $j \neq i $ can exchange messages at iteration $k$.  
Each agent has access only to its local function evaluations, 
and the decision variables communicated by its neighbors.
Given $\mathcal{G}_k$, we let $\mathcal{N}_k^i$ denote the neighbor set of agent $i$ at iteration $k$: $\mathcal{N}_k^i  = \{j \in \mathcal{V} \setminus\{i\}  \mid i \leftrightarrow j \in \mathcal{E}_k\} \cup \{i\}$. The inclusion of agent $i$ itself reflects the fact that each agent has access to its own information.  
The time-varying graph model captures changes in information flow, such as routing variations caused by sensor mobility, that cannot be represented by a fixed topology.

We make the following assumptions on the graph $\mathcal{G}_k$.
\begin{assumption}\label{assumption:graph q}
There exists a scalar $q$ such that the graph $(\mathcal{V},\cup_{l=1,\dots,q}\mathcal{E}_{k+l})$ is connected for all $k$. 
\end{assumption}
The graph sequence $\{\mathcal{G}_k\}_{k\geq 0}$ specifies the
available communication links over time. 
To describe how agents combine information received through these links, we associate with
each graph $\mathcal{G}_k$ a weight matrix
$W_k=[w_k^{ij}]$.
We impose the following standard assumptions on the weight matrices
to ensure information mixing over the time-varying graph sequence. 
\begin{assumption}\label{assumption:graph}
For every $k \geq 0$, the matrix $W_k$ has the following properties:
\begin{enumerate}
    \item $W_k$ is doubly stochastic,i.e., $\1^{\T}W_k = \1^{\T}$ and $W_k \1 = \1 $.
    \item $W_k$ is compatible with the structure of the graph $\mathcal{G}_k$, i.e., $w_k^{ij}=0$ whenever $j\notin\mathcal N_k^i$.
    \item $W_k$ has positive diagonal entries, i.e., $w_k^{ii} > 0$ for all $i \in \mathcal{V}$.
    \item There is a $\varrho>0$ such that $w_k^{ij}\geq\varrho$ for all $i\in\mathcal V$ and $j\in\mathcal N_k^i$.
\end{enumerate}
\end{assumption} 

Using this information structure, the next section develops a distributed algorithm for minimizing the average CVaR objective in~\eqref{eq problem}.

\section{Distributed risk-averse algorithm}\label{sec:main result} 
This section develops the zeroth-order distributed risk-averse method summarized in Algorithm~\ref{alg:zeroth-order}.

Each agent uses sampled local losses to estimate its CVaR and then constructs a one-point gradient estimate for a smoothed local CVaR objective. 
We begin by smoothing CVaR for gradient estimation.
 Let $\mathbb{S}^{d-1} = \crk{u \in \mathbb{R}^d: \| u\| =1}$ denote the boundary of $\mathbb{B}^d$. We introduce a smoothed approximation of $C^i(x)$ following the techniques in \cite{flaxman2004online}:
\begin{equation}\label{eq:smoothed cvar}
    C_\delta^i(x) = \mathbb{E}_{\nu \sim {\rm Unif}(\mathbb{B})}[C^i(x+\delta_i \nu )].
\end{equation}   
Here, $\delta_i>0$ is the smoothing radius of agent $i$. We define $\delta_{\max}:=\max_{i\in\mathcal V}\delta_i$ and assume that $\delta_{\max}<r$.
It follows from \cite{duchi2015optimal} that the gradient of $C_\delta^i(x)$ can be expressed as
\begin{equation}\label{eq:two point gradient estimate}
    \nabla C_\delta^i(x) = \mathbb{E}_{u\sim {\rm Unif}(\mathbb{S}^{d-1})}\left[\frac{d}{\delta_i} C^i(x+\delta_i u ) u\right].
\end{equation}
This identity motivates an estimator of the smoothed
CVaR gradient. 
At iteration $k$, let $y_k^i$ denote the point at which agent $i$
estimates its local CVaR gradient, and let $\hat y_k^i = y_k^i+\delta_i u_k^i$ be its perturbed counterpart, where $u_k^i\sim {\rm Unif}(\mathbb{S}^{d-1})$.
Inspired by \eqref{eq:two point gradient estimate}, agent $i$ constructs
the CVaR gradient estimate
\begin{equation}\label{eq:estimated gradient}
    \hat{g}_k^i = \frac{d}{\delta_i}  \widehat{C}_k^i(\hat{y}_k^i)    u_k^i.
\end{equation} 
Here, 
$\widehat C_k^i(\hat y_k^i)$ denotes the empirical CVaR estimate
computed from the queried losses at $\hat{y}_k^i$. 
Because it uses an empirical rather than exact CVaR value, this estimator is biased with respect to the gradient in~\eqref{eq:smoothed cvar}.

We next describe how the empirical CVaR estimates in \eqref{eq:estimated gradient} are constructed. 
At each iteration $k$, agent $i$ first obtains the point $y_k^i$ through local
communication. Specifically, agent $i$ sends its local decision
$x_k^i$ to its neighbors and receives $x_k^j$ from agents
$j\in\mathcal{N}_k^i$. It then computes the weighted aggregation
\begin{equation}\label{eq:update y}
    y_k^i
    =
    \sum_{j\in\mathcal{N}_k^i} w_k^{ij}x_k^j .
\end{equation}
Given the perturbed point $\hat y_k^i$, agent $i$ queries its local function and constructs the empirical distribution function
\begin{align}\label{eq:EDF}
 \hat{P}_k^{i}(z,\hat{y}_k^i)&=\frac{1}{s_k^i} \sum_{j=1}^{s_k^i} \mathbf{1} \{J^i(\hat{y}_k^i, \xi_k^{i,j}) \leq z \}.
\end{align} 
Here, $J^i(y,\xi)$ denotes a noisy function evaluation at
the point $y$, and $s_k^i\geq 1$ denotes the number of queried
function evaluations at each point for agent $i$ at iteration $k$. 
For each agent $i$, all samples $\{\xi_k^{i,j}\}_{k,j}$ are drawn independently from the same fixed distribution $P_i$ 
and are independent of the perturbation directions and the past iterates.
The sample sizes are chosen to satisfy
\begin{align}\label{eq:sampling requirement}
    \sum_{i=1}^{m} \frac{1}{m\sqrt{s_k^i}} \le e_s, \quad \text{for~all}~ k,
\end{align}
where $e_s>0$ is a tuning scalar. 
Given the empirical distribution $\hat P_k^i$, 
we define the empirical
CVaR by the Rockafellar--Uryasev representation:
\begin{align*}
\widehat C_k^i(\hat y_k^i)
:=
\min_{\tau\in\mathbb R}
\left\{
\tau+
\frac{1}{\alpha_i s_k^i}
\sum_{j=1}^{s_k^i}
\left[
J^i(\hat y_k^i,\xi_k^{i,j})-\tau
\right]_+
\right\}.
\end{align*}

\begin{algorithm}[t]
\caption{Zeroth-order distributed risk-averse learning}
\label{alg:zeroth-order}
\begin{algorithmic}[1]
    \Require Initial values $x_0^i$ and $y_0^i$, iteration horizon $T$,
    smoothing parameters $\delta_i$, learning rates $\eta_k$, and risk
    levels $\alpha_i$

    \For{$k = 0,\ldots,T-1$}
        \For{$i = 1,\ldots,m$}
            \State Send $x_k^i$ to agents $j\in\mathcal{N}_k^i$ and
            receive $x_k^j$
            \State Compute the aggregation $y_k^i$ via \eqref{eq:update y}
            \State Sample $u_k^i$ and set
            $\hat{y}_k^i = y_k^i + \delta_i u_k^i$

            \For{$j = 1,\ldots,s_k^i$}
                \State Play $\hat{y}_k^i$ and query
                $J_k^i(\hat{y}_k^i,\xi_k^{i,j})$
            \EndFor

            \State Form the empirical distribution function
            $\hat{P}_k^i(z;\hat{y}_k^i)$ via \eqref{eq:EDF}
            \State Form the CVaR estimate
            $\widehat{C}(\hat{y}_k^i)$ and its gradient estimate
            $\hat{g}_k^i$ as in \eqref{eq:estimated gradient}
            \State Update $x_{k+1}^i$ via
            \eqref{eq:gradient descent}
        \EndFor
    \EndFor
\end{algorithmic}
\end{algorithm}
Define $\mathcal X_\delta:=\{x\mid x/(1-\delta_{\max}/r)\in\mathcal X\}$ as the projection set, and let $\mathcal P_{\mathcal X_\delta}(x):=\arg\min_{y\in\mathcal X_\delta}\|x-y\|^2$ denote the Euclidean projection onto this set.
Using the CVaR gradient estimate in \eqref{eq:estimated gradient}, agent $i$ updates its decision by
\begin{align}
x_{k+1}^i &= \mathcal{P}_{\mathcal{X}_\delta} [y_{k}^i - \eta_k \hat{g}_k^i]. \label{eq:gradient descent}
\end{align}   
Here, $x_0^i\in\mathcal X_\delta$ is the initial decision and $\eta_k>0$ is the step size.
Note that 
$  \mathcal{X}_\delta  \oplus \delta_i  \mathbb{B} =  \big(1-\frac{\delta_{\max}}{r}\big)\mathcal{X} \oplus \delta_i   \mathbb{B} = \big(1-\frac{\delta_{\max}}{r}\big)\mathcal{X} \oplus \frac{\delta_i}{r} r   \mathbb{B}  \subseteq \big(1-\frac{\delta_i}{r}\big)\mathcal{X} \oplus \frac{\delta_i}{r} \mathcal{X} = \mathcal{X}.  $  
Hence, by induction, $x_k^i\in\mathcal X_\delta\subset\mathcal X$ for every agent and iteration. 
Convexity of $\mathcal X_\delta$ and row stochasticity of $W_k$ further imply that $y_k^i\in\mathcal X_\delta$, 
and the construction above therefore guarantees $\hat y_k^i\in\mathcal X$.

\section{Convergence analysis}\label{sec:analysis}

This section analyzes Algorithm~\ref{alg:zeroth-order}. Section~\ref{sec:iteration} derives the one-step recursion, Section~\ref{sec:ergodic} establishes finite-time and asymptotic guarantees for the weighted ergodic iterate, and Section~\ref{sec:last iterate} proves ordinary last-iterate convergence under fixed per-agent sample sizes.
 
We impose the following assumptions, which are standard in risk-averse optimization~\cite{cardoso2019risk,wang2022risk}.
\begin{assumption}\label{assumption:Lipschitz}
For every $i\in[m]$ and $\xi^i\in\Xi_i$, the function $J^i(\cdot,\xi^i)$ is $L_i$-Lipschitz continuous on $\mathcal X$. Moreover, there exists $U_i>0$ such that $|J^i(x,\xi^i)|\le U_i$ for all $(x,\xi^i)\in\mathcal X\times\Xi_i$.
\end{assumption}
\begin{assumption}\label{assumption:convex}
For every $i\in[m]$ and $\xi^i\in\Xi_i$, the function $J^i(\cdot,\xi^i)$ is convex on $\mathcal X$.
\end{assumption}

We first present two properties of the CVaR objective and its smoothed approximation; see, e.g.,~\cite{cardoso2019risk}.  
The proofs can be seen, e.g., in \cite{cardoso2019risk}. 
\begin{lemma}[Lipschitzness]\label{lemma:smoothed cvar Lipschitz} 
Under Assumption~\ref{assumption:Lipschitz}, both $C^i$ and $C_\delta^i$ are $L_i$-Lipschitz continuous on $\mathcal X$, and $|C_\delta^i(x)-C^i(x)|\le\delta_iL_i$ for every $x\in\mathcal X$ and $i\in\mathcal V$. 
\end{lemma}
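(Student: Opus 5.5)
The plan is to work directly from the Rockafellar--Uryasev representation and treat the three claims in order.

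First, $C^i$ is $L_i$-Lipschitz. Fix $x,y\in\mathcal X$ and $\tau\in\mathbb R$. The map $z\mapsto(z-\tau)_+$ is $1$-Lipschitz, so
\[
\bigl|(J^i(x,\xi)-\tau)_+-(J^i(y,\xi)-\tau)_+\bigr|\le L_i\|x-y\|
\]
for every $\xi$. Taking expectations, the functions $\phi_x(\tau):=\tau+\alpha_i^{-1}\mathbb E[(J^i(x,\xi^i)-\tau)_+]$ and $\phi_y(\tau)$ satisfy $|\phi_x(\tau)-\phi_y(\tau)|\le \alpha_i^{-1}L_i\|x-y\|$ uniformly in $\tau$. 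Since $|\min\phi_x-\min\phi_y|\le\sup_\tau|\phi_x-\phi_y|$, this gives only the constant $L_i/\alpha_i$. Uniform boundedness $|J^i|\le U_i$ guarantees the minimum over $\tau$ is attained in $[-U_i,U_i]$, so the minima are well defined.

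To obtain the sharp constant $L_i$, I would use the dual (risk-envelope) representation instead:
\[
\mathrm{CVaR}_{\alpha_i}[Z]=\sup\bigl\{\mathbb E[ZQ]:\ 0\le Q\le 1/\alpha_i,\ \mathbb E[Q]=1\bigr\}.
\]
This is a supremum of linear functionals with $\mathbb E[Q]=1$ and $Q\ge0$. Hence
\[
\bigl|\mathrm{CVaR}[Z_1]-\mathrm{CVaR}[Z_2]\bigr|\le\sup_Q \mathbb E\bigl[|Z_1-Z_2|\,Q\bigr]\le\operatorname{ess\,sup}|Z_1-Z_2|.
\]
Applied to $Z_1=J^i(x,\xi^i)$ and $Z_2=J^i(y,\xi^i)$, whose difference is pointwise at most $L_i\|x-y\|$, this yields the constant $L_i$. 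An equivalent route that stays inside the paper's definition is to note that CVaR is monotone and translation-equivariant (as one reads off the Rockafellar--Uryasev formula by the substitution $\tau\to\tau+c$), then apply these properties to $J^i(y,\cdot)-L_i\|x-y\|\le J^i(x,\cdot)\le J^i(y,\cdot)+L_i\|x-y\|$. I would present this second route, since it needs only the stated definition.

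For the smoothed function $C^i_\delta$, write
\[
|C^i_\delta(x)-C^i_\delta(y)|\le\mathbb E_\nu\bigl|C^i(x+\delta_i\nu)-C^i(y+\delta_i\nu)\bigr|\le L_i\|x-y\|.
\]
The points $x+\delta_i\nu$ must lie where the Lipschitz property holds. I would handle this by observing that the translation/monotonicity argument works for any two points at which $J^i$ is $L_i$-Lipschitz. The relevant points lie in $\mathcal X$ whenever $x\in\mathcal X_\delta$, by the Minkowski-sum inclusion shown before this section. Otherwise we simply interpret the statement on the domain where $C^i_\delta$ is defined. The approximation bound follows the same way:
\[
|C^i_\delta(x)-C^i(x)|\le\mathbb E_\nu|C^i(x+\delta_i\nu)-C^i(x)|\le L_i\delta_i\,\mathbb E\|\nu\|\le\delta_iL_i.
\]

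The main obstacle is the sharp constant $L_i$ rather than $L_i/\alpha_i$. That is why the proof goes through monotonicity and translation equivariance, not a naive sup-over-$\tau$ comparison. A secondary obstacle is the domain issue for the perturbed points.
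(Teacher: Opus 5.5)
Your proof is correct. It follows the standard argument that the paper cites from \cite{cardoso2019risk} without reproducing. Monotonicity and translation equivariance of CVaR, read off the Rockafellar--Uryasev formula, turn $J^i(y,\xi)-L_i\|x-y\|\le J^i(x,\xi)\le J^i(y,\xi)+L_i\|x-y\|$ into the sharp constant $L_i$. Averaging over $\nu\sim{\rm Unif}(\mathbb B)$ then transfers this to $C_\delta^i$ and gives $|C_\delta^i(x)-C^i(x)|\le L_i\delta_i\mathbb E\|\nu\|\le\delta_iL_i$.

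Your domain caveat is also right. The claims about $C_\delta^i$ are only justified for $x\in\mathcal X_\delta$, where $\mathcal X_\delta\oplus\delta_i\mathbb B\subseteq\mathcal X$ keeps the perturbed points feasible, and that is exactly where the paper uses them.
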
 
\begin{lemma}[Convexity]\label{lemma:cvar convex}
Given Assumption \ref{assumption:convex}, $C^i(x)$ and $C_\delta^i(x)$ are both convex in $x$, for $i\in\mathcal{V}$. 
\end{lemma}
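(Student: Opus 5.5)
We prove convexity of $C^i$ first and then transfer it to the smoothed function $C^i_\delta$.

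\emph{Step 1: joint convexity.} Fix $i$ and define
\[
\phi^i(x,\tau) := \tau + \frac{1}{\alpha_i}\,\mathbb{E}\big[(J^i(x,\xi^i)-\tau)_+\big].
\]
We show that $\phi^i$ is jointly convex on $\mathcal X\times\mathbb R$. For each fixed $\xi^i$, the map $(x,\tau)\mapsto J^i(x,\xi^i)-\tau$ is convex in $(x,\tau)$ by Assumption~\ref{assumption:convex}, being a convex function plus a linear one. The scalar map $z\mapsto (z)_+$ is convex and nondecreasing. Composing a convex nondecreasing function with a convex function gives a convex function, so $(x,\tau)\mapsto (J^i(x,\xi^i)-\tau)_+$ is jointly convex.

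Expectation preserves convexity, since convexity inequalities are preserved under integration against a probability measure. The expectation is finite because $|J^i|\le U_i$ by Assumption~\ref{assumption:Lipschitz}. Dividing by $\alpha_i>0$ and adding the linear term $\tau$ keeps $\phi^i$ jointly convex.

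\emph{Step 2: partial minimization.} We have $C^i(x)=\inf_{\tau\in\mathbb R}\phi^i(x,\tau)$. A partial infimum of a jointly convex function over a convex set is convex, provided it is finite. We verify this directly. Take $x_1,x_2\in\mathcal X$, $\lambda\in[0,1]$, and $\epsilon>0$. Pick $\tau_1,\tau_2$ that are $\epsilon$-optimal for $x_1$ and $x_2$ respectively. Then
\[
C^i(\lambda x_1+(1-\lambda)x_2) \le \phi^i\big(\lambda x_1+(1-\lambda)x_2,\ \lambda\tau_1+(1-\lambda)\tau_2\big) \le \lambda C^i(x_1)+(1-\lambda)C^i(x_2)+\epsilon .
\]
Letting $\epsilon\to0$ gives convexity of $C^i$.

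Finiteness of $C^i$ follows from boundedness. For $\tau\ge U_i$ we have $\phi^i=\tau\ge U_i$. For $\tau\le -U_i$, using $J^i-\tau\ge0$,
\[
\phi^i = \tau + \frac{1}{\alpha_i}\mathbb E[J^i-\tau] \ge \tau\Big(1-\frac{1}{\alpha_i}\Big) - \frac{U_i}{\alpha_i} \ge -U_i .
\]
Since $\phi^i(x,\cdot)$ is convex, it is at least the minimum of its values at $\pm U_i$ on $[-U_i,U_i]$. Hence $\phi^i\ge -U_i$ on all of $\mathbb R$, and the infimum is finite; in fact it is attained on $[-U_i,U_i]$. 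This is the only point needing care.

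\emph{Step 3: smoothing.} For each fixed $\nu\in\mathbb B$, the map $x\mapsto C^i(x+\delta_i\nu)$ is convex, because it is a convex function composed with an affine shift. Note the shifted points must lie in $\mathcal X$. This holds for $x\in\mathcal X_\delta$ by the inclusion $\mathcal X_\delta\oplus\delta_i\mathbb B\subseteq\mathcal X$ shown earlier, so convexity of $C^i_\delta$ is understood on that domain.

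Alternatively, the argument of Steps 1--2 applies verbatim on the enlarged domain wherever $J^i$ is defined and convex. Averaging over $\nu\sim\mathrm{Unif}(\mathbb B)$ preserves convexity, because the convexity inequality holds pointwise in $\nu$ and integrates. Therefore $C^i_\delta$ is convex.
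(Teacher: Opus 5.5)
Your proof is correct in substance. It is the standard argument: the paper gives no proof of this lemma and only cites the literature, where convexity of $C^i$ comes from joint convexity of the Rockafellar--Uryasev function $\phi^i(x,\tau)$ followed by partial minimization over $\tau$. Convexity of $C^i_\delta$ then follows by averaging the shifted convex functions $x\mapsto C^i(x+\delta_i\nu)$. Your remark that $C^i_\delta$ is only defined and convex where $x+\delta_i\mathbb B\subseteq\mathcal X$, that is on $\mathcal X_\delta$, is a useful precision. The paper glosses over it by stating Lemma~\ref{lemma:smoothed cvar Lipschitz} on all of $\mathcal X$, and $\mathcal X_\delta$ is where the paper actually applies the convexity of $C^i_\delta$.

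One step is wrongly justified. The sentence ``since $\phi^i(x,\cdot)$ is convex, it is at least the minimum of its values at $\pm U_i$ on $[-U_i,U_i]$'' is false as a general fact. Convexity bounds a function on an interval from \emph{above} by the chord, not from below by its endpoint values; for example, $\tau\mapsto\tau^2$ on $[-1,1]$ dips below both endpoint values.

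The conclusion is still true, and easier to get. Since $\mathbb E[(J^i-\tau)_+]\ge 0$, you have $\phi^i(x,\tau)\ge\tau\ge -U_i$ for every $\tau\ge -U_i$. Your computation already covers $\tau\le -U_i$, so $\phi^i\ge -U_i$ on all of $\mathbb R$. Together with $\phi^i(x,U_i)=U_i$, this gives $C^i(x)\in[-U_i,U_i]$, which is all your $\epsilon$-optimality argument needs.

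If you also want attainment, it follows from your tail computations. On $[U_i,\infty)$, $\phi^i(x,\tau)=\tau$ is increasing. On $(-\infty,-U_i]$, $\phi^i(x,\cdot)$ is affine with slope $1-1/\alpha_i<0$. So the infimum is taken over the compact interval $[-U_i,U_i]$, where $\phi^i(x,\cdot)$ is continuous.
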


\subsection{Iteration relation}\label{sec:iteration}

This subsection bounds the network disagreement and finite-sample CVaR error and then incorporates both terms into a one-step recursion.

Define the global average of the decision state as 
$$\bar{x}_k = \frac{1}{m}\sum_{i=1}^m x_{k}^i.$$ 
Because $|J^i|\le U_i$ and $\|u_k^i\|=1$, the estimator in~\eqref{eq:estimated gradient} satisfies
\begin{align}\label{eq:gradient bound}
\|\hat{g}_k^i\| &= \norm{ \frac{d}{\delta_i}  \widehat{C}_k^i(\hat{y}_k^i) u_k^i }  \le \frac{dU_i}{\delta_i}     .
\end{align}
To simplify notation, we denote $G_i = \frac{dU_i}{\delta_i}$ for all $ i \in \mathcal{V}$, and $G_{\max} := \max_{i\in\mathcal{V}} G_i$.   
The following lemma bounds the network disagreement error driven by the doubly stochastic weight matrices.
\begin{lemma}\cite[Lemma 4.1]{sundhar2010distributed}
\label{lemma:disagreement}
Suppose Assumptions~\ref{assumption:graph q}--\ref{assumption:convex} hold. Then, for every $i\in\mathcal V$ and $k\ge0$,
\begin{align}\label{eq:disagreement}
\|\bar{x}_{k+1} - x_{k+1}^i\| 
\leq &  m\theta\mu^{k+1}  x_{0,\max} + \theta \sum_{l=1}^k \eta_{l-1} \mu^{k+1-l} \sum_{j=1}^m G_j  \nonumber\\
&+ \frac{\eta_k}{m} \sum_{j=1}^m G_j + \eta_k G_i.
\end{align}  
Here, $x_{0,\max}:=\max_{i\in\mathcal V}\|x_0^i\|$, and the graph-dependent constants $\theta>0$ and $\mu\in(0,1)$ are specified in Lemma~\ref{lemma:graph}.
\end{lemma}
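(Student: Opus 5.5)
The plan is to rewrite the projected update as a linear consensus iteration driven by a bounded perturbation, and then use the geometric mixing of products of the $W_k$ matrices. Define the projection error $e_k^i := x_{k+1}^i - y_k^i = \mathcal P_{\mathcal X_\delta}[y_k^i-\eta_k\hat g_k^i]-y_k^i$. Since $y_k^i\in\mathcal X_\delta$ (row stochasticity and convexity), nonexpansiveness of the projection gives
\begin{align*}
\|e_k^i\| = \bigl\|\mathcal P_{\mathcal X_\delta}[y_k^i-\eta_k\hat g_k^i]-\mathcal P_{\mathcal X_\delta}[y_k^i]\bigr\|\le \eta_k\|\hat g_k^i\|\le \eta_k G_i,
\end{align*}
where the last step uses \eqref{eq:gradient bound}. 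Thus $x_{k+1}^i=\sum_j w_k^{ij}x_k^j+e_k^i$.

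Next I would unroll this recursion with the transition matrices $\Phi(k,s):=W_kW_{k-1}\cdots W_s$. This gives
\begin{align*}
x_{k+1}^i=\sum_{j}[\Phi(k,0)]_{ij}x_0^j+\sum_{l=1}^{k}\sum_j[\Phi(k,l)]_{ij}e_{l-1}^j+e_k^i .
\end{align*}
Double stochasticity makes $\1^\T W_k=\1^\T$, so averaging over $i$ yields
\begin{align*}
\bar x_{k+1}=\frac1m\sum_j x_0^j+\sum_{l=1}^{k}\frac1m\sum_j e_{l-1}^j+\frac1m\sum_j e_k^j .
\end{align*}
Subtracting the two expressions, the initial-condition term is bounded by $\sum_j|[\Phi(k,0)]_{ij}-\frac1m|\,\|x_0^j\|$, and each intermediate term by $\sum_j|[\Phi(k,l)]_{ij}-\frac1m|\,\eta_{l-1}G_j$. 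The final terms contribute $\frac{\eta_k}{m}\sum_jG_j+\eta_kG_i$ by the triangle inequality.

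The key ingredient is the standard geometric mixing estimate $|[\Phi(k,s)]_{ij}-\frac1m|\le\theta\mu^{k-s+1}$. I take this from Lemma~\ref{lemma:graph}, which rests on Assumptions~\ref{assumption:graph q}--\ref{assumption:graph} (joint connectivity over windows of length $q$, positive diagonals, and the uniform lower bound $\varrho$). Substituting it in gives $m\theta\mu^{k+1}x_{0,\max}$ for the initial term and $\theta\sum_{l=1}^k\eta_{l-1}\mu^{k+1-l}\sum_jG_j$ for the accumulated perturbations, which is exactly \eqref{eq:disagreement}.

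I expect the main obstacle to be the mixing estimate itself, i.e.\ proving Lemma~\ref{lemma:graph}. If it had to be shown, I would follow Nedić--Ozdaglar: over every block of $(m-1)q$ steps, all entries of the product are at least $\varrho^{(m-1)q}$, which yields a contraction of the spread of each column toward $1/m$. The remaining steps are only careful index bookkeeping, in particular matching the exponents $\mu^{k+1-l}$ with the indexing of $\Phi(k,l)$.
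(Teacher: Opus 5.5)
Your proposal is correct and is essentially the standard argument behind the cited result (\cite[Lemma 4.1]{sundhar2010distributed}, which the paper invokes without reproducing it). That argument bounds the projection error by $\eta_k G_i$ using nonexpansiveness and $y_k^i\in\mathcal X_\delta$, unrolls the perturbed consensus recursion with transition matrices, subtracts the average using double stochasticity, and applies the geometric mixing bound of Lemma~\ref{lemma:graph}. Your convention of including $W_s$ in $\Phi(k,s)$ with exponent $k-s+1$ correctly matches the paper's $\Phi(k,s)=W_k\cdots W_{s+1}$ with exponent $k-s$.

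One caveat, which does not affect the proof: your fallback sketch for the mixing lemma (uniform positivity over blocks of $(m-1)q$ steps) would give Nedi\'c--Ozdaglar-type constants, not the specific $\theta,\mu$ of Lemma~\ref{lemma:graph}. This is harmless because you, like the paper, take that lemma as given.
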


Using a finite empirical distribution introduces error into the CVaR gradient estimate. We next define and bound this error.
Define the CVaR gradient estimate based on the true distribution as 
\begin{equation*}
g_k^i = \frac{d}{\delta_i}C^i(\hat{y}_k^i )u_k^i, 
\end{equation*} 
The resulting estimation error is
\begin{equation}\label{eq:cvar gradient error}
e_k^i = \hat{g}_k^i- g_k^i = \frac{d}{\delta_i}\brk{{ \widehat{C}_k^i (\hat{y}_k^i) - C^i(\hat{y}_k^i )    }u_k^i} 
\end{equation}
which is induced by the empirical CVaR approximation. 
Define $\mathcal{F}_k$ as the sigma-algebra generated by the random variables $\{x_0^i,u_t^i,\xi_t^{i,j}:i\in[m],j=1,\dots,s_t^i,t=0\dots,k-1\}$, up to iteration $k$. Then $x_k^i$ and $y_k^i$ are $\mathcal{F}_k$-measurable. Assume that, conditional on $\mathcal F_k$, the current perturbation directions $\{u_k^i\}_{i=1}^m$ are independent and each follows ${\rm Unif}(\mathbb S^{d-1})$.
Define
$\mathcal H_k := \mathcal F_k \vee \sigma(u_k^1,\ldots,u_k^m)$.
For every $i\in[m]$, the current batch is independent of $\mathcal H_k$.
Then, by \eqref{eq:two point gradient estimate}, we have
$\mathbb{E} \brk{ g_k^i | \mathcal{F}_k  } = \nabla C_\delta^i(y_k^i)$.

\begin{lemma}\label{lemma:cvar gradient error}
Suppose the sampling condition~\eqref{eq:sampling requirement} holds. 
For all $k \ge 0$, the expected global CVaR gradient estimate error with respect to $\mathcal{F}_k$ is bounded as 
\begin{align}\label{eq:cvar gradient error sum}
    &  \sum_{i=1}^m \mathbb{E} \brk{  \|e_k^i\| \big| \mathcal{F}_k  }    \le   
    md\sqrt{ 2\pi }\Big\{\max_{i\in \mathcal{V}} \frac{U_i}{\alpha_i\delta_i}\Big\} e_s.   
\end{align}
\end{lemma}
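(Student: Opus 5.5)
Since $\|u_k^i\|=1$, the definition~\eqref{eq:cvar gradient error} gives
\[
\|e_k^i\| = \frac{d}{\delta_i}\bigl|\widehat C_k^i(\hat y_k^i)-C^i(\hat y_k^i)\bigr| .
\]
The lemma therefore reduces to a uniform finite-sample bound on the empirical CVaR error at a point that is fixed given $\mathcal H_k$. The plan has three steps: condition on $\mathcal H_k$, bound the CVaR error by the sup-distance between distribution functions, and control that sup-distance by the Dvoretzky--Kiefer--Wolfowitz (DKW) inequality.

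\textbf{Step 1 (conditioning).} Work conditionally on $\mathcal H_k=\mathcal F_k\vee\sigma(u_k^1,\ldots,u_k^m)$. Given $\mathcal H_k$, the point $\hat y_k^i$ is deterministic and lies in $\mathcal X$. The batch $\{\xi_k^{i,j}\}_{j}$ is i.i.d.\ from $P_i$ and independent of $\mathcal H_k$. Hence the losses $Z_j:=J^i(\hat y_k^i,\xi_k^{i,j})$ are i.i.d.\ copies of $Z:=J^i(\hat y_k^i,\xi^i)$, supported in $[-U_i,U_i]$. Once we show $\mathbb E[\,|\widehat C_k^i-C^i|\mid\mathcal H_k]$ is bounded by a deterministic quantity, the tower property passes the same bound to $\mathcal F_k$.

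\textbf{Step 2 (CVaR error versus DKW distance).} Let $F$ be the CDF of $Z$ and $\hat F=\hat P_k^i(\cdot,\hat y_k^i)$ the empirical CDF. Because the support is in $[-U_i,U_i]$, the minimizing $\tau$ in both Rockafellar--Uryasev problems can be restricted to $[-U_i,U_i]$. For any such $\tau$, the tail formula gives
\[
\mathbb E(Z-\tau)_+=\int_\tau^{U_i}(1-F(z))\,dz ,
\]
and the same identity holds for $\hat F$. The two objectives therefore differ by at most
\[
\frac{1}{\alpha_i}\int_{-U_i}^{U_i}|F-\hat F|\,dz\le \frac{2U_i}{\alpha_i}\sup_z|F(z)-\hat F(z)|.
\]
Since a difference of minima is bounded by the sup of the difference of the objectives, we get
\[
|\widehat C_k^i-C^i|\le \frac{2U_i}{\alpha_i}\|F-\hat F\|_\infty .
\]

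\textbf{Step 3 (DKW and integration).} By DKW with Massart's constant,
\[
\mathbb P\bigl(\|F-\hat F\|_\infty>\epsilon\bigr)\le 2e^{-2s\epsilon^2},\qquad s=s_k^i .
\]
Integrating the tail gives
\[
\mathbb E\|F-\hat F\|_\infty\le \int_0^\infty 2e^{-2s\epsilon^2}d\epsilon=\sqrt{\pi/(2s)} .
\]
Combining with Step 2,
\[
\mathbb E[\,|\widehat C_k^i-C^i|\mid\mathcal H_k]\le \frac{2U_i}{\alpha_i}\sqrt{\frac{\pi}{2s_k^i}}=\frac{\sqrt{2\pi}\,U_i}{\alpha_i\sqrt{s_k^i}} .
\]
Multiplying by $d/\delta_i$, bounding $U_i/(\alpha_i\delta_i)$ by its maximum over agents, summing over $i$, and applying~\eqref{eq:sampling requirement} in the form $\sum_i 1/\sqrt{s_k^i}\le m e_s$ yields exactly
\[
\sum_{i=1}^m\mathbb E[\|e_k^i\|\mid\mathcal F_k]\le md\sqrt{2\pi}\Bigl\{\max_i\frac{U_i}{\alpha_i\delta_i}\Bigr\}e_s .
\]

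\textbf{Main obstacle.} The delicate step is Step 2, where the quantile-based CVaR error must be reduced to a sup-norm CDF distance with the exact constant $2U_i/\alpha_i$. The restriction of $\tau$ to $[-U_i,U_i]$ must be justified for both the true and the empirical problem, which holds because each minimizer is a (sample) $(1-\alpha_i)$-quantile of a variable supported in $[-U_i,U_i]$. The tail-integral identity must also use the lower limit $-U_i$ correctly, so that the integration length is $2U_i$. The DKW step is standard, though the constant $\sqrt{2\pi}$ depends on using Massart's constant $2$ in the tail bound.
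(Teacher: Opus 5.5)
Your proposal is correct and follows essentially the same route as the paper. Both arguments condition on $\mathcal H_k$, bound the CVaR error by $\frac{2U_i}{\alpha_i}\sup_z|F-\hat F|$, apply DKW with constant $2$ and integrate the tail to get $\sqrt{\pi/(2s_k^i)}$, then use the tower property and the sampling condition. The only difference is that you derive the CVaR--CDF bound directly from the Rockafellar--Uryasev tail-integral representation, restricting $\tau$ to $[-U_i,U_i]$, whereas the paper cites this bound from prior work.
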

\textit{Proof.} See Appendix. 
\hfill $\qed$

The following proposition combines these estimates into the one-step recursion.
\begin{proposition}
Let Assumptions~\ref{assumption:graph q}--\ref{assumption:convex}  
and the sampling condition~\eqref{eq:sampling requirement} hold. 
For any  $z \in \mathcal{X}_{\delta}$ and all $k\ge 0$, we have
\begin{align}\label{eq:iterate relation filtration}
 &\sum_{i=1}^m  \mathbb{E}\left[\|y_{k+1}^i-z\|^2 | \mathcal{F}_k \right] \le   \sum_{i=1}^m    \|y_k^i-z\|^{2} \nonumber \\ 
&\quad  + 2\eta_kL_{\max}\sum_{j=1}^{m}\|\bar{x}_k-x_k^j\|  -2m\eta_k(\mathcal{C}(\bar{x}_k)-\mathcal{C}(z)) \nonumber \\
& \quad +  2m\eta_k D_1   
+\eta_k^{2}  \sum_{i=1}^m G_i^2,  
\end{align}
with $ D_1 =  2L_{\max}\delta_{\max}+  \sqrt{ 2\pi } dD_x\Big\{\max_{i\in \mathcal{V}} \frac{U_i}{\alpha_i\delta_i}\Big\} e_s$, 
and $L_{\max} = \max_{i \in \mathcal{V}}L_i$. 
\end{proposition}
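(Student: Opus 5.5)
We follow the standard projected distributed subgradient recursion argument. The plan is to first bound $\|x_{k+1}^i-z\|^2$, then pass to the $y$ variables by mixing, and finally control the inner product term using convexity together with the error decomposition $\hat g_k^i = g_k^i + e_k^i$.

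\textbf{Step 1: from $y$ to $x$.} Since $y_{k+1}^i=\sum_j w_{k+1}^{ij}x_{k+1}^j$, convexity of $\|\cdot-z\|^2$ and row stochasticity give $\|y_{k+1}^i-z\|^2\le\sum_j w_{k+1}^{ij}\|x_{k+1}^j-z\|^2$. Summing over $i$ and using column stochasticity yields
\[
\sum_i\|y_{k+1}^i-z\|^2\le\sum_i\|x_{k+1}^i-z\|^2 .
\]

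\textbf{Step 2: the projection step.} Since $z\in\mathcal X_\delta$, nonexpansiveness of $\mathcal P_{\mathcal X_\delta}$ gives
\[
\|x_{k+1}^i-z\|^2\le\|y_k^i-z\|^2-2\eta_k\langle \hat g_k^i,y_k^i-z\rangle+\eta_k^2G_i^2,
\]
where the last term uses the bound \eqref{eq:gradient bound}. Taking the conditional expectation given $\mathcal F_k$ and using $\mathbb E[g_k^i|\mathcal F_k]=\nabla C_\delta^i(y_k^i)$ with $y_k^i$ being $\mathcal F_k$-measurable gives
\[
-2\eta_k\langle\nabla C^i_\delta(y_k^i),y_k^i-z\rangle-2\eta_k\mathbb E[\langle e_k^i,y_k^i-z\rangle|\mathcal F_k].
\]
The error term is bounded by $2\eta_kD_x\mathbb E[\|e_k^i\||\mathcal F_k]$, because $y_k^i,z\in\mathcal X$. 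Summing over $i$ and invoking Lemma~\ref{lemma:cvar gradient error} gives $2m\eta_k\sqrt{2\pi}dD_x\max_i\frac{U_i}{\alpha_i\delta_i}e_s$, which is the second part of $D_1$.

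\textbf{Step 3: the convexity term.} By Lemma~\ref{lemma:cvar convex},
\[
-\langle\nabla C^i_\delta(y_k^i),y_k^i-z\rangle\le -(C^i_\delta(y_k^i)-C^i_\delta(z)).
\]
Lemma~\ref{lemma:smoothed cvar Lipschitz} then lets us replace $C_\delta^i$ by $C^i$ at both points at cost $2\delta_iL_i\le 2L_{\max}\delta_{\max}$, which produces the first part of $D_1$. It also lets us replace $C^i(y_k^i)$ by $C^i(\bar x_k)$ at cost $L_i\|y_k^i-\bar x_k\|$.

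\textbf{Step 4: the network term (main obstacle).} We need to produce $2\eta_kL_{\max}\sum_j\|\bar x_k-x_k^j\|$. The key observation is $\|y_k^i-\bar x_k\|\le\sum_j w_k^{ij}\|x_k^j-\bar x_k\|$, so summing over $i$ and using column stochasticity gives
\[
\sum_i\|y_k^i-\bar x_k\|\le\sum_j\|x_k^j-\bar x_k\|.
\]
Summing $C^i(\bar x_k)-C^i(z)$ over $i$ gives $m(\mathcal C(\bar x_k)-\mathcal C(z))$. Collecting the terms from Steps 1 to 4 yields \eqref{eq:iterate relation filtration}.

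The delicate point is bookkeeping. The smoothing bias must be charged exactly twice, and the bound $D_x$ must be used for $\|y_k^i-z\|$, which is valid because both points lie in $\mathcal X$. Conditioning must go through $\mathcal H_k$ so that Lemma~\ref{lemma:cvar gradient error} applies.
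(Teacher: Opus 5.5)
Your proposal is correct and follows the paper's proof essentially step for step: projection nonexpansiveness, the mixing bound $\sum_i\|y_{k+1}^i-z\|^2\le\sum_j\|x_{k+1}^j-z\|^2$ via column stochasticity, the split $\hat g_k^i=g_k^i+e_k^i$ combined with convexity of $C_\delta^i$ and Lemma~\ref{lemma:cvar gradient error}, and the bound $\sum_i\|y_k^i-\bar x_k\|\le\sum_j\|x_k^j-\bar x_k\|$. The only cosmetic difference is the order of two substitutions: you replace $C_\delta^i$ by $C^i$ before moving from $y_k^i$ to $\bar x_k$, whereas the paper first moves to $\bar x_k$ using Lipschitz continuity of $C_\delta^i$; since both functions are $L_i$-Lipschitz, the constants come out identical.
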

\textit{Proof.}
Using the Euclidean projection property, for all $z \in \mathcal{X}_\delta$ and all $k$, we have 
\begin{align}\label{eq:one-step bound}
& \|x_{k+1}^i-z\|^{2} \nonumber \\
&=\| \mathcal{P}_{\mathcal{X}_\delta} \left[y_k^i-\eta_k\hat{g}_k^i\right]-z\|^{2} \nonumber \\
&\le \| y_k^i-\eta_k\hat{g}_k^i -z\|^{2} \nonumber \\
&= \|y_k^i-z\|^{2}-2 \eta_k \langle \hat{g}_k^i,y_k^i-z\rangle +\eta_k^{2}\|\hat{g}_k^i\|^{2},  
\end{align}
 where the inequality follows from $\mathcal{P}_{\mathcal{X}_\delta}[z] = z$.
Taking the conditional expectation of~\eqref{eq:one-step bound} with respect to $\mathcal F_k$ gives
\begin{align}\label{eq:exp one-step bound}
 \mathbb{E}\left[\|x_{k+1}^i-z\|^{2} |\mathcal{F}_k\right] \le &  \mathbb{E}\!\left[ \|y_k^i-z\|^{2}\big| \mathcal{F}_k\right]  \nonumber \\
&\hspace{-4em}  - 2 \eta_k\mathbb{E}\left[ \langle \hat{g}_k^i,y_k^i-z\rangle|\mathcal{F}_k\right]\! +\!\eta_k^{2}  G_i^2   , 
\end{align} 
where we used $\|\hat g_k^i\|\le G_i$ from~\eqref{eq:gradient bound}.
Moreover, since  $x_k^i$ and $y_k^i$ are $\mathcal{F}_k$-measurable, we have $\mathbb{E}\!\left[ \|y_k^i-z\|^{2}\big| \mathcal{F}_k\right]  = \|y_k^i-z\|^{2}$. 
We next relate $\|y_{k+1}^i-z\|$ to $\|x_{k+1}^i-z\|$. For $z \in \mathcal{X}_\delta$, we have  
\begin{align}\label{eq:y<=x}
&\sum_{i=1}^{m}\|y_{k+1}^i-z\|^{2}=\sum_{i=1}^{m}\left\|\sum_{j=1}^{m} w_{k+1}^{ij}  x_{k+1}^j-z\right\|^{2} \nonumber \\
&\leq \sum_{i=1}^{m} \sum_{j=1}^{m} w_{k+1}^{ij}\|x_{k+1}^j-z\|^{2} = \sum_{j=1}^{m}\|x_{k+1}^j-z\|^2.
\end{align}
The inequality follows from convexity of the squared norm, and the final equality uses column stochasticity, $\sum_{i=1}^m w_{k+1}^{ij}=1$, from Assumption~\ref{assumption:graph}.
Combining \eqref{eq:exp one-step bound} with \eqref{eq:y<=x}, 
we obtain 
\begin{align}\label{eq:iterate relation}
&\sum_{i=1}^m   \mathbb{E}\left[\|y_{k+1}^i-z\|^2|\mathcal{F}_k\right]   \le \sum_{j=1}^m  \mathbb{E}\left[\|x_{k+1}^j -z\|^2 |\mathcal{F}_k\right] \nonumber \\ 
&\!\le\! \sum_{i=1}^m  \Big\{   \|y_k^i-z\|^{2}    -2 \eta_k\mathbb{E}\left[ \langle \hat{g}_k^i,y_k^i-z\rangle|\mathcal{F}_k\right]  + \eta_k^{2}G_i^2\Big\}.
\end{align}
Using~\eqref{eq:cvar gradient error}, the inner-product term in~\eqref{eq:iterate relation} can be written as
\begin{align}\label{eq:Exp <g, y-z>}
&\mathbb{E}\left[ \langle \hat{g}_k^i,y_k^i-z\rangle|\mathcal{F}_k\right] \nonumber \\
& =\mathbb{E}\left[ \langle g_k^i, y_k^i-z\rangle |\mathcal{F}_k \right] +\mathbb{E}\left[ \langle e_k^i, y_k^i-z \rangle |\mathcal{F}_k \right] \nonumber \\ 
 &=  
 \langle   \nabla  C_\delta^i (y_k^i)      ,y_k^i-z\rangle   +\mathbb{E}\left[ \langle      e_k^i,y_k^i-z\rangle \big| \mathcal{F}_k \right]
  \nonumber \\ 
&\ge     C_\delta^i (y_k^i) -C_\delta^i(z)    -D_x \mathbb{E}\left[ \| e_k^i \| \big| \mathcal{F}_k\right] .
\end{align}
The first equality uses the error decomposition~\eqref{eq:cvar gradient error}. The inequality follows from convexity of $C_\delta^i$ (Lemma~\ref{lemma:cvar convex}) and the $\|y_k^i-z\|\le D_x$.
For $C_\delta^i (y_k^i) -C_\delta^i(z) $, we have 
\begin{align}\label{eq:C(y)-C(z)}
 &C_\delta^i(y_k^i) -C_\delta^i(z)  \nonumber \\
& =  C_\delta^i(y_k^i) - C_\delta^i(\bar{x}_k) + C_\delta^i(\bar{x}_k)-  C_\delta^i(z) \nonumber \\ 
& \ge -  L_{\max}\|\bar{x}_k-y_k^i\| + C_\delta^i(\bar{x}_k)-  C_\delta^i(z) \nonumber \\ 
& \ge - L_{\max}\|\bar{x}_k-y_k^i\| +  C^i(\bar{x}_k)-  C^i(z)  - 2L_i\delta_i.
\end{align}
The first inequality uses the $L_i$-Lipschitz continuity of $C_\delta^i$, with $L_{\max}:=\max_{i\in\mathcal V}L_i$. The second uses the smoothing bound $|C_\delta^i(x)-C^i(x)|\le\delta_iL_i$ from Lemma~\ref{lemma:smoothed cvar Lipschitz}. Because the updates remain in $\mathcal X_\delta$, their average $\bar x_k$ also belongs to $\mathcal X_\delta$.
Summing \eqref{eq:C(y)-C(z)} over $i \in [m]$, we obtain 
\begin{align}\label{eq:sum(C(y)-C(z))}
  &\sum_{i=1}^{m}  C_\delta^i(y_k^i)-C_\delta^i(z)  \nonumber \\ 
 & \! \geq \! -L_{\max}\sum_{i=1}^{m}\|\bar{x}_k-y_k^i\|+\sum_{i=1}^m\left(C^i(\bar{x}_k) -C^i(z)    -   2L_i\delta_i \right)  \nonumber \\
  & \!   \geq \! -L_{\max}\sum_{j=1}^{m}\|\bar{x}_k-x_k^j\| + \sum_{i=1}^m\left(C^i(\bar{x}_k) -C^i(z)    -   2L_i\delta_i \right)  \nonumber \\
 &  \! \geq\! -L_{\max}\sum_{j=1}^{m}\!\|\bar{x}_k-x_k^j\|\!+\!m(\mathcal{C}(\bar{x}_k)-\mathcal{C}(z))\!-\!    2mL_{\max}\delta_{\max}  
\end{align}
where the second inequality follows from
\begin{align*}
&\sum_{i=1}^{m}\|y_k^i- \bar{x}\| =\sum_{i=1}^{m}\left\|\sum_{j=1}^{m} w_k^{ij}  x_k^j- \bar{x}\right\|  \nonumber \\
&\leq \sum_{i=1}^{m} \sum_{j=1}^{m} w_k^{ij}\|x_k^j- \bar{x}\|  = \sum_{j=1}^{m}\|x_k^j- \bar{x}\| .
\end{align*} 
The last inequality follows from the definition of $\mathcal{C}(\cdot)$. 
The bound of the global estimation error $\sum_{i=1}^m\mathbb{E}\left[ \| e_k^i \| \big| \mathcal{F}_k\right] $ is provided in \eqref{eq:cvar gradient error sum}.
Finally, substituting \eqref{eq:cvar gradient error sum}, \eqref{eq:Exp <g, y-z>}, and \eqref{eq:sum(C(y)-C(z))} into \eqref{eq:iterate relation}, we obtain \eqref{eq:iterate relation filtration}.
\hfill $\qed$

\subsection{Ergodic convergence}\label{sec:ergodic}

This subsection first derives a non-asymptotic expected optimality-gap bound for the weighted ergodic iterate in 
Theorem~\ref{thm:ergodic}. 
Lemma~\ref{lemma:exact consensus} then establishes exact asymptotic consensus, 
and Corollary~\ref{cor:convergence_rate} specializes the finite-time bound to polynomially decaying step sizes 
and gives its asymptotic error order.
\begin{theorem} 
\label{thm:ergodic}
Suppose Assumptions~\ref{assumption:graph q}--\ref{assumption:convex} and the sampling condition~\eqref{eq:sampling requirement} hold. 
Then
\begin{align}\label{eq:sum c regret}
&\mathbb{E}\brk{\mathcal{C}(\hat{x}_T)- \mathcal{C}(x^\ast)}\le    \frac{D_x^2 }{2\sum_{k=0}^{T-1}\eta_{k}}+  G_{\max}^2\frac{\sum_{k=0}^{T-1}\eta_k^2}{2\sum_{k=0}^{T-1}\eta_{k}} \nonumber \\ 
&+ 
\frac{ L_{\max}}{m}\frac{\sum_{k=0}^{T-1}  \eta_k  \mathbb{E}\brk{\sum_{j=1}^{m}\|\bar{x}_k-x_k^j\|}}{\sum_{k=0}^{T-1}\eta_{k}} + 
 D_2  ,
\end{align}
where $\hat{x}_T = \frac{\sum_{k=0}^{T-1}\eta_k\bar{x}_k}{\sum_{k=0}^{T-1}\eta_k}$ is the $T$-step weighted average state, $D_2 = D_1 +   D_x\delta_{\max}L_{\max}/r $.  
\end{theorem}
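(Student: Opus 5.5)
The plan is to telescope the one-step recursion \eqref{eq:iterate relation filtration} of the Proposition. Its comparison point $z$ must lie in $\mathcal X_\delta$, while $x^\ast$ minimizes $\mathcal C$ over $\mathcal X$. We therefore first fix $z=(1-\delta_{\max}/r)x^\ast\in\mathcal X_\delta$ and relate $\mathcal C(z)$ to $\mathcal C(x^\ast)$. Since $\|z-x^\ast\|=(\delta_{\max}/r)\|x^\ast\|$ and $\mathcal C$ is $L_{\max}$-Lipschitz by Lemma~\ref{lemma:smoothed cvar Lipschitz}, we get $\mathcal C(z)-\mathcal C(x^\ast)\le L_{\max}\delta_{\max}\|x^\ast\|/r$. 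The term $D_x\delta_{\max}L_{\max}/r$ in $D_2$ suggests bounding $\|x^\ast\|\le D_x$. This uses $0\in r\mathbb B\subseteq\mathcal X$, so $\|x^\ast-0\|\le D_x$.

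Next, take total expectations in \eqref{eq:iterate relation filtration} and rearrange:
\begin{align*}
2m\eta_k\,\mathbb E[\mathcal C(\bar x_k)-\mathcal C(z)] \le{}& \sum_i\mathbb E\|y_k^i-z\|^2-\sum_i\mathbb E\|y_{k+1}^i-z\|^2 \\
&+2\eta_kL_{\max}\mathbb E\sum_j\|\bar x_k-x_k^j\|+2m\eta_kD_1+\eta_k^2\sum_iG_i^2 .
\end{align*}
Sum over $k=0,\dots,T-1$. The distance terms telescope to at most $\sum_i\mathbb E\|y_0^i-z\|^2$. Since both $y_0^i$ and $z$ lie in $\mathcal X_\delta\subset\mathcal X$, this is at most $mD_x^2$. (If $y_0^i$ is just an input, I would replace it with the mixture of the $x_0^j$, as in \eqref{eq:update y}.) We also bound $\sum_iG_i^2\le mG_{\max}^2$. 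Dividing by $2m\sum_k\eta_k$ produces exactly the first three terms of \eqref{eq:sum c regret} plus $D_1$.

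Finally, by convexity of $\mathcal C$ (Lemma~\ref{lemma:cvar convex}) and Jensen's inequality, $\mathcal C(\hat x_T)\le\sum_k\eta_k\mathcal C(\bar x_k)/\sum_k\eta_k$. This turns the weighted sum into a bound on $\mathbb E[\mathcal C(\hat x_T)-\mathcal C(z)]$. Adding the Lipschitz comparison $\mathcal C(z)-\mathcal C(x^\ast)\le D_x\delta_{\max}L_{\max}/r$ gives $D_2=D_1+D_x\delta_{\max}L_{\max}/r$.

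The only delicate point is the bookkeeping of the projection shift: checking that $z\in\mathcal X_\delta$ by the definition $\mathcal X_\delta=(1-\delta_{\max}/r)\mathcal X$, and confirming the crude bound $\|x^\ast\|\le D_x$. Everything else is a routine telescoping argument.
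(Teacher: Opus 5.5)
Your proposal is correct and follows the paper's proof essentially step for step. Like the paper, you take total expectations in the one-step recursion at the scaled comparator $z_\delta=(1-\delta_{\max}/r)x^\ast\in\mathcal X_\delta$ and telescope, bounding $\sum_i\mathbb E\|y_0^i-z_\delta\|^2\le mD_x^2$ and $\sum_iG_i^2\le mG_{\max}^2$. You then divide by $2m\sum_k\eta_k$, apply Jensen, and absorb the comparator gap $D_xL_{\max}\delta_{\max}/r$ (via $0\in\mathcal X$, so $\|x^\ast\|\le D_x$) into $D_2$.
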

\textit{Proof}.  \ 
Starting from~\eqref{eq:iterate relation filtration}, define $V_k(z):=\sum_{i=1}^m\|y_k^i-z\|^2$ for brevity.
By the tower property of conditional expectation, we have  $\mathbb{E}\left[ \mathbb{E}[ V_{k+1}(z)| \mathcal{F}_k] \right] =  \mathbb{E}\brk{ V_{k+1}(z)}$. 
Taking total expectations in~\eqref{eq:iterate relation filtration} therefore yields
\begin{align}\label{eq:iteration relation 0}
& \mathbb{E}\brk{V_{k+1}(z)  } \le   \mathbb{E}\brk{V_{k}(z) }+ 2\eta_kL_{\max}\mathbb{E}\brk{\sum_{j=1}^{m}\|\bar{x}_k-x_k^j\|} \nonumber \\ 
&-2m\eta_k\mathbb{E}\brk{\mathcal{C}(\bar{x}_k)-\mathcal{C}(z)}+  2m\eta_k D_1   
+\eta_k^{2}  \sum_{i=1}^m G_i^2.  
\end{align}
Let $x^\ast\in\arg\min_{x\in\mathcal X}\mathcal C(x)$ and define the scaled comparison point
$z_\delta:=\left(1-\delta_{\max}/r\right)x^\ast$.
Since $\mathcal X_\delta=\left(1-\delta_{\max}/r\right)\mathcal X$, we have $z_\delta\in\mathcal X_\delta$.
Taking $z=z_\delta$ in \eqref{eq:iteration relation 0}, and rearranging the terms, we obtain 
\begin{align}\label{eq:iteration relation optimal}
&2m\eta_k\mathbb{E}\brk{\mathcal{C}(\bar{x}_k)-\mathcal{C}(z_\delta)  }  \le   \mathbb{E}\brk{V_k(z_\delta)}   - \mathbb{E}\brk{V_{k+1}(z_\delta)}  \nonumber \\ 
&+2\eta_kL_{\max}   \mathbb{E}\brk{\sum_{j=1}^{m}\|\bar{x}_k-x_k^j\| }+  2m\eta_k D_1  
+\eta_k^{2}  \sum_{i=1}^m G_i^2  .
\end{align}
Summing the difference terms in~\eqref{eq:iteration relation optimal} over $k=0,\ldots,T-1$ gives
\begin{align*}
&\sum_{k=0}^{T-1} \crk{\mathbb{E}\brk{V_{k}(z_\delta)}   - \mathbb{E}\brk{V_{k+1}(z_\delta)}}  =\mathbb{E}\brk{V_{0}(z_\delta)}   - \mathbb{E}\brk{V_{T}(z_\delta)}  \nonumber\\
&\le \mathbb{E}\brk{V_{0}(z_\delta)} =  \mathbb{E}\brk{\sum_{i=1}^m  \|y_0^i-z_\delta\|^2}   \le  m D_x^2. 
\end{align*}
Then, summing~\eqref{eq:iteration relation optimal} over $k=0,\ldots,T-1$ and dividing by $2m\sum_{k=0}^{T-1}\eta_k$ gives
\begin{align}\label{eq:sum eta c regret}
& \!  \frac{\sum_{k=0}^{T-1} \eta_k\mathbb{E}\brk{\mathcal{C}(\bar{x}_k)-\mathcal{C}(z_\delta) }}{\sum_{k=0}^{T-1}\eta_k} \! \le   \!  \frac{D_x^2 }{2\sum_{k=0}^{T-1}\eta_{k}} \!+   \!\frac{G_{\max}^2}{2} \! \frac{\sum_{k=0}^{T-1}\eta_k^2}{\sum_{k=0}^{T-1}\eta_{k}} \nonumber \\ 
&+ 
\frac{ L_{\max}}{m}\frac{\sum_{k=0}^{T-1}  \eta_k  \mathbb{E}\brk{\sum_{j=1}^{m}\|\bar{x}_k-x_k^j\|}}{\sum_{k=0}^{T-1}\eta_{k}} + 
 D_1    .
\end{align}  
Since $0\in\mathcal X$, the definition of $D_x$ gives $\|x^\ast\|\leq D_x$. The Lipschitz continuity of $\mathcal C$ therefore yields
\begin{align}\label{eq:scaled-comparator-gap}
&\mathcal C(z_\delta)-\mathcal C(x^\ast)
\leq L_{\max}\|z_\delta-x^\ast\| \nonumber\\
&=\frac{L_{\max}\delta_{\max}}{r}\|x^\ast\|
\leq\frac{D_xL_{\max}\delta_{\max}}{r}.
\end{align}
Consequently,
\begin{align}\label{eq:scaled-comparator-decomposition}
&\mathcal{C}(\bar{x}_k) - \mathcal{C}(x^\ast)
=\mathcal{C}(\bar{x}_k)-\mathcal C(z_\delta)
+\mathcal C(z_\delta)-\mathcal C(x^\ast)\nonumber\\
&\leq\mathcal{C}(\bar{x}_k)-\mathcal C(z_\delta)
+\frac{D_x\delta_{\max}L_{\max}}{r}.
\end{align}

By Jensen's inequality, we have  
\begin{equation}\label{eq:jensen inequality}
\mathbb{E}\brk{\mathcal{C}(\hat{x}_T) - \mathcal{C}(x^\ast) }\le \frac{\sum_{k=0}^{T-1}\eta_{k}\mathbb{E}\brk{\mathcal{C}(\bar{x}_k)-\mathcal{C}(x^\ast)}}{\sum_{k=0}^{T-1}\eta_{k}} .  
\end{equation} 
Substituting \eqref{eq:scaled-comparator-decomposition} and \eqref{eq:jensen inequality} into \eqref{eq:sum eta c regret}, we obtain \eqref{eq:sum c regret}. 
\hfill $\blacksquare$
Theorem~\ref{thm:ergodic} provides a non-asymptotic error bound for the optimality gap of the ergodic iterate, and explicitly quantifies the effects of network disagreement, finite-sample errors in CVaR estimation, and the bias introduced by the zeroth-order approach.

Theorem~\ref{thm:ergodic} contains an accumulated disagreement term that depends on the graph and step size sequence. The next lemma characterizes this term and establishes exact asymptotic consensus.
\begin{lemma}\label{lemma:exact consensus}
Let Assumptions~\ref{assumption:graph q}--\ref{assumption:convex} hold. 
If $\sum_{k=0}^\infty \eta_k = \infty$ and $\sum_{k=0}^\infty \eta_k^2<\infty $, then $\sum_{k=0}^\infty\eta_k  \|\bar{x}_k-x_k^i\|  < \infty$, for all $i \in \mathcal{V}$. Furthermore, for all $i\in\mathcal{V}$, we have $\lim_{k\rightarrow\infty}\|\bar{x}_{k+1}-x_{k+1}^i\|=0$ almost surely.
\end{lemma}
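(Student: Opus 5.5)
The plan is to start from the deterministic disagreement bound of Lemma~\ref{lemma:disagreement}. It holds pathwise, because $\|\hat g_k^i\|\le G_i$ almost surely. Write $\bar G:=\sum_{j=1}^m G_j$. Then for every $i$ and $k\ge0$,
\begin{align*}
\|\bar{x}_{k+1}-x_{k+1}^i\| \le m\theta\mu^{k+1}x_{0,\max} + \theta\bar G\sum_{l=1}^{k}\eta_{l-1}\mu^{k+1-l} + \eta_k\Big(\frac{\bar G}{m}+G_i\Big).
\end{align*}
Both claims of the lemma then reduce to properties of the sequences on the right-hand side. These sequences are deterministic, so the almost-sure statement will actually hold surely.

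\textbf{Summability.} Multiply the bound (with index shifted to $k+1$) by $\eta_{k+1}$ and sum over $k$, treating the three terms separately.
\begin{itemize}
\item[(i)] For the geometric term, Cauchy--Schwarz gives $\sum_k\eta_{k+1}\mu^{k+1}\le(\sum_k\eta_k^2)^{1/2}(\sum_k\mu^{2k})^{1/2}<\infty$.
\item[(ii)] For the last term, assume $\eta_k$ is nonincreasing, which is implicit for the polynomial steps used later; otherwise use $\eta_{k+1}\eta_k\le(\eta_{k+1}^2+\eta_k^2)/2$. Either way, $\sum_k\eta_{k+1}\eta_k\le\sum_k\eta_k^2<\infty$.
\item[(iii)] The convolution term is the main obstacle. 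Use $\eta_{k+1}\eta_{l-1}\le\frac12(\eta_{k+1}^2+\eta_{l-1}^2)$, swap the order of summation, and bound the geometric tails by $\sum_{k}\mu^{k+1-l}\le 1/(1-\mu)$. This gives
\begin{align*}
\sum_{k}\eta_{k+1}\sum_{l=1}^{k}\eta_{l-1}\mu^{k+1-l}\le \frac{1}{1-\mu}\sum_{k}\eta_k^2<\infty .
\end{align*}
\end{itemize}
The $k=0$ term $\eta_0\|\bar x_0-x_0^i\|\le \eta_0 D_x$ is finite. Hence $\sum_k\eta_k\|\bar x_k-x_k^i\|<\infty$ for all $i$. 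The hypothesis $\sum_k\eta_k=\infty$ is not needed for this part, and the argument covers each realization.

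\textbf{Consensus.} Show that the right-hand side tends to zero.
\begin{itemize}
\item[(i)] $\mu^{k+1}\to0$ since $\mu\in(0,1)$.
\item[(ii)] $\eta_k\to0$ because $\sum_k\eta_k^2<\infty$.
\item[(iii)] The convolution $\sum_{l=1}^{k}\eta_{l-1}\mu^{k+1-l}$ tends to zero by the standard lemma (Ram--Nedi\'c--Veeravalli): if $\gamma_k\to0$ and $\mu\in(0,1)$, then $\sum_{l\le k}\mu^{k-l}\gamma_l\to0$. To prove it, fix $\epsilon$ and pick $N$ with $\gamma_l<\epsilon$ for $l\ge N$. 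The part of the sum with $l<N$ is at most $\mu^{k-N}\sup_l\gamma_l\to0$, and the part with $l\ge N$ is at most $\epsilon/(1-\mu)$.
\end{itemize}
Therefore $\|\bar x_{k+1}-x_{k+1}^i\|\to0$ along every sample path, and in particular almost surely.
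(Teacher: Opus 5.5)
Your proof is correct. The summability half is essentially the paper's argument. You multiply the pathwise bound of Lemma~\ref{lemma:disagreement} by $\eta_{k+1}$, split the cross products $\eta_{k+1}\eta_{l-1}$ by the arithmetic--geometric mean inequality, and control the convolution by exchanging the order of summation. The paper packages that last step as part~2 of Lemma~\ref{lemma:accumulative sequence}, and it handles the geometric term through boundedness of $\eta_k$ rather than Cauchy--Schwarz.

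The consensus half takes a different route. The paper does not read the limit off the disagreement bound. Instead it:
\begin{enumerate}
\item derives the recursion $X_{k+1}\le X_k+h_k$ for $X_k=\sum_{i}\|x_k^i-\bar x_k\|^2$ from nonexpansiveness of the projection;
\item uses the summability just established to make $h_k$ summable;
\item obtains convergence of $X_k$ by a quasi-Fej\'er argument;
\item excludes a positive limit, since with $\sum_k\eta_k=\infty$ it would contradict $\sum_k\eta_k\|\bar x_k-x_k^i\|<\infty$.
\end{enumerate}
You instead send each term of the bound to zero via $\mu^k\to0$, $\eta_k\to0$, and part~1 of Lemma~\ref{lemma:accumulative sequence}. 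This is shorter, does not use $\sum_k\eta_k=\infty$, and gives an explicit decay rate (of order $\eta_k$ for the polynomial step sizes used later).

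The paper's detour buys little here, because it rests on the same pointwise bound. Its merit is that the structure (Fej\'er-type recursion plus divergent step sizes) needs only summability of the weighted disagreement, however that is obtained. It is also the template reused in the Robbins--Siegmund argument of Theorem~\ref{theorem:last-iterate}. Both arguments are pathwise, so either one gives sure rather than merely almost-sure convergence; your observation to that effect is valid but not specific to your route.

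There is one small slip in your proof of the convolution lemma. The head $\sum_{l<N}\mu^{k-l}\gamma_l$ is bounded by $\mu^{k-N+1}\sup_l\gamma_l/(1-\mu)$, not by $\mu^{k-N}\sup_l\gamma_l$; the latter fails for $\mu$ near $1$ and $N\ge2$. The conclusion is unaffected, and you could simply cite part~1 of Lemma~\ref{lemma:accumulative sequence} with $\gamma=0$.
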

\textit{Proof}.    See Appendix. 
\hfill $\qed$

Lemma~\ref{lemma:exact consensus} shows that, under the standard diminishing step size, the weighted disagreement is summable and the local decisions asymptotically reach exact consensus almost surely.

We next specialize the ergodic bound to polynomially decaying step sizes.   
\begin{corollary} 
\label{cor:convergence_rate}
Let Assumptions~\ref{assumption:graph q}--\ref{assumption:convex} and the sampling condition~\eqref{eq:sampling requirement}  hold. 
Choose $\eta_k=c_1/(k+1)^\beta$, where $\beta\in[0.5,1]$ and $c_1>0$. Then
\begin{equation}\label{eq:ergodic order}
\mathop{\lim\sup}_{T \to \infty} \mathbb{E}\brk{\mathcal{C}(\hat{x}_T) - \mathcal{C}^\ast} =  \order{\delta_{\max} + \frac{e_s}{\delta_{\min}}}.
\end{equation} 
\end{corollary}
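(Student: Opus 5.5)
The plan is to take the finite-time bound \eqref{eq:sum c regret} of Theorem~\ref{thm:ergodic} with $\eta_k=c_1/(k+1)^\beta$ and let $T\to\infty$ in each of its four terms. First, the step-size sums. For $\beta\in[0.5,1]$ we have $\sum_{k=0}^{T-1}\eta_k\to\infty$: it grows like $T^{1-\beta}$ for $\beta<1$ and like $\log T$ for $\beta=1$. For $\beta\in(0.5,1]$ the sum $\sum_k\eta_k^2$ is finite, while for $\beta=0.5$ it equals $c_1^2\sum_{k\le T}1/(k+1)\sim c_1^2\log T$. In every case the ratio $\sum\eta_k^2/\sum\eta_k$ therefore tends to zero; at $\beta=1/2$ it behaves like $\log T/\sqrt T$. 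Hence the first two terms, $D_x^2/(2\sum\eta_k)$ and $G_{\max}^2\sum\eta_k^2/(2\sum\eta_k)$, vanish in the limit. Note that $G_{\max}=dU_{\max}/\delta_{\min}$ is a fixed constant here, since the radii do not depend on $k$.

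Second, the disagreement term. Rather than invoking Lemma~\ref{lemma:exact consensus}, which requires $\sum\eta_k^2<\infty$ and so excludes $\beta=1/2$, I will bound it directly from Lemma~\ref{lemma:disagreement}. That lemma gives, deterministically,
\begin{align*}
\|\bar x_k-x_k^j\|\le m\theta\mu^k x_{0,\max}+\theta mG_{\max}\sum_{l=1}^{k-1}\eta_{l-1}\mu^{k-l}+2\eta_{k-1}G_{\max}.
\end{align*}
Multiplying by $\eta_k$ and summing over $k$, the first piece contributes $\sum_k\eta_k\mu^k<\infty$. For the convolution piece, I will use $\eta_k\eta_{l-1}\le(\eta_k^2+\eta_{l-1}^2)/2$ and the bound $\sum_{k\ge l}\mu^{k-l}\le1/(1-\mu)$ to get a contribution of order $\sum_k\eta_k^2/(1-\mu)$. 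The last piece gives $\sum_k\eta_k\eta_{k-1}$, which is also of order $\sum\eta_k^2$. So $\sum_{k<T}\eta_k\,\mathbb E\sum_j\|\bar x_k-x_k^j\|=\order{1+\sum_{k<T}\eta_k^2}$. Dividing by $\sum_{k<T}\eta_k$ sends the third term to zero by the same ratio argument as before. For $\beta>1/2$ this is exactly the summability statement of Lemma~\ref{lemma:exact consensus}, and I will cite it there.

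Finally, the remaining constant is
\begin{align*}
D_2=2L_{\max}\delta_{\max}+\sqrt{2\pi}\,dD_x\max_i\frac{U_i}{\alpha_i\delta_i}\,e_s+\frac{D_xL_{\max}}{r}\,\delta_{\max}.
\end{align*}
Bounding $\max_i U_i/(\alpha_i\delta_i)\le U_{\max}/(\alpha_{\min}\delta_{\min})$ puts $D_2$ in the form $\order{\delta_{\max}+e_s/\delta_{\min}}$, with constants depending on $L_{\max}$, $D_x$, $r$, $d$, $U_{\max}$, $\alpha_{\min}$. Taking the $\limsup$ then yields \eqref{eq:ergodic order}, where $\mathcal C^\ast=\mathcal C(x^\ast)$.

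The main obstacle is the disagreement term at the endpoint $\beta=1/2$, where Lemma~\ref{lemma:exact consensus} does not apply. The plan handles this through the direct convolution estimate above, which needs only the ratio $\sum\eta_k^2/\sum\eta_k\to0$ and not summability of $\eta_k^2$. Checking the index shifts in Lemma~\ref{lemma:disagreement}, in particular $\eta_{k-1}$ against $\eta_k$, which are comparable with ratio at most $2^\beta$, is routine.
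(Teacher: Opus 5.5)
Your proposal is correct and follows the paper's proof. The paper likewise evaluates each term of \eqref{eq:sum c regret} separately and bounds the accumulated disagreement directly from Lemma~\ref{lemma:disagreement} as $K_0+K_1G_{\max}\sum_{k<T}\eta_k^2$. That is exactly what your convolution estimate produces, and it covers $\beta=1/2$ without needing Lemma~\ref{lemma:exact consensus}. The paper then concludes using $\sum\eta_k^2/\sum\eta_k\to0$ and $D_2=\order{\delta_{\max}+e_s/\delta_{\min}}$, as you do.

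The only differences are cosmetic. The paper records the finite-horizon rate $\order{\rho_\beta(T)/\delta_{\min}^2+\delta_{\max}+e_s/\delta_{\min}}$ before letting $T\to\infty$, whereas you pass to the limit directly. Also, strictly, $G_{\max}\le dU_{\max}/\delta_{\min}$ holds as an inequality rather than an equality, which does not affect your argument.
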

\textit{Proof}.  We evaluate the terms of \eqref{eq:sum c regret} separately.
First, for the cumulated step sizes, we have 
\begin{align*}
\sum_{k=0}^{T-1} \eta_k = \left\{\begin{array}{ll}
  \order{T^{1-\beta}}    &  {\rm if}~ \beta \in [0.5, 1)\\ 
\order{\log(T)}      &  {\rm if}~ \beta = 1
\end{array}\right.,
\end{align*}
and
\begin{align*}
\sum_{k=0}^{T-1} \eta_k^2 = \left\{\begin{array}{ll}
 \order{\log(T)}    &  {\rm if}~ \beta = 0.5\\ 
\order{1}      &  {\rm if}~ \beta = (0.5, 1]
\end{array}\right. .
\end{align*}
From Lemma~\ref{lemma:disagreement},  the upper bound of the consensus error term can be written as $$\sum_{k=0}^{T-1}  \eta_k  \mathbb{E}\brk{\sum_{j=1}^{m}\|\bar{x}_k-x_k^j\|} \le K_0 + K_1G_{\max}\sum_{k=0}^{T-1}\eta_k^2,$$
where $K_0$ and $K_1$ are constants independent of $T$. 
Moreover, $G_{\max}  = \max_{i\in \mathcal{V}}\frac{d_iU_i}{\delta_i}=\order{\frac{1}{\delta_{\min}}}$, with $\delta_{\min} = \min_{i\in\mathcal{V}}\delta_i$. 
From the definition of $D_2$, we have $D_2 = \order{\delta_{\max}+ \frac{e_s}{\delta_{\min}}}$.
Substituting these estimates into~\eqref{eq:sum c regret} shows that the weighted ergodic iterate satisfies
\begin{align}\label{eq:finite-horizon order ergodic}
  \mathbb{E}\brk{\mathcal{C}(\hat{x}_T)-\mathcal{C}^\ast}  =   \order{ \frac{\rho_\beta(T)}{\delta_{\min}^2 } + \delta_{\max}+ \frac{e_s}{\delta_{\min}}} ,  
\end{align}
with 
\begin{align*}
\rho_\beta(T)
 = \left\{\begin{array}{ll}
  \frac{\log T}{\sqrt{T}}   &  {\rm if}~\beta=0.5 \\
  \frac{1}{T^{1-\beta}}&  {\rm if}~ \beta \in (0.5, 1)\\ 
\frac{1}{\log(T) }     &  {\rm if}~ \beta = 1
\end{array}\right.,
\end{align*}
Since $\lim_{T\to\infty}\rho_\beta(T)=0$, for $\beta\in[0.5,1]$, we have \eqref{eq:ergodic order}.
\hfill $\qed$

\begin{remark}
This work presents the first framework for distributed risk-averse optimization utilizing CVaR.  
Although the smoothing parameters are fixed throughout the iterations, we keep their dependence explicit in the convergence bound because they are algorithmic tuning parameters. 
In the finite-iteration result \eqref{eq:finite-horizon order ergodic}, $\rho_\beta(T)$ describes the decay of the transient optimization error.
The factor $1/\delta_{\min}^2$ arises from the magnitude of the one-point zeroth-order gradient estimator.  
Additionally, the communication network affects the finite-time bound through the accumulated disagreement term. 
However, under a diminishing step size, the disagreement vanishes asymptotically. 
This means that the network topology influences the transient convergence behavior, 
but does not change the order of the limiting error neighborhood.
\end{remark}

\begin{remark}
In the present analysis, the smoothing radii $\delta_i$ are fixed throughout the iterations. Consequently, \eqref{eq:ergodic order} shows that the expected optimality gap of the ergodic iterate approaches a nonvanishing neighborhood of the optimum, rather than establishing convergence of the algorithm itself to that neighborhood.
The term $\delta_{\max}$ represents the approximation introduced by CVaR smoothing, whereas $e_s/\delta_{\min}$ represents the finite-sample error in the empirical CVaR estimates.
Specifically, decreasing the smoothing radius reduces the approximation bias but increases the magnitude of the zeroth-order gradient estimator.
If a diminishing $\delta_k$ scheme is selected as in \cite{yi2022zeroth}, the sample size $s_k^i$ must increase asymptotically to control the resulting estimation error. We therefore use fixed smoothing radii and finite sample sizes to obtain an implementable algorithm.
\end{remark}

\subsection{Last-iterate convergence}\label{sec:last iterate}

Under the sampling condition~\eqref{eq:sampling requirement}, the sample sizes may vary over time. In this case, Theorem~\ref{thm:ergodic} and Lemma~\ref{lemma:exact consensus} yield the following last-iterate limit-inferior guarantee.

\begin{proposition}\label{prop:last-iterate-liminf}
Suppose Assumptions~\ref{assumption:graph q}--\ref{assumption:convex} and the sampling condition~\eqref{eq:sampling requirement} hold. Select $\eta_k=c_1/(k+1)^\beta$, where $\beta\in(0.5,1]$ and $c_1>0$. Then, for every agent $i\in[m]$,
\begin{equation}
\liminf_{k\to\infty}
\mathbb E\brk{\mathcal C(x_k^i)-\mathcal C^\ast}
 =\order{\delta_{\max}+\frac{e_s}{\delta_{\min}}}.
\label{eq:local-liminf-bound}
\end{equation}
\end{proposition}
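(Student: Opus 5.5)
The plan is to pass from the weighted-ergodic bound to a $\liminf$ statement for the averaged iterate $\bar x_k$, and then transfer it to each $x_k^i$ using the consensus result of Lemma~\ref{lemma:exact consensus}.

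\textbf{Step 1: a bound on the weighted average of the per-iterate gaps.} Write $a_k:=\mathbb E[\mathcal C(\bar x_k)-\mathcal C^\ast]\ge 0$. The proof of Theorem~\ref{thm:ergodic} gives, before Jensen's inequality is applied, the bound \eqref{eq:sum eta c regret} combined with \eqref{eq:scaled-comparator-decomposition}:
\begin{align*}
\frac{\sum_{k=0}^{T-1}\eta_k a_k}{\sum_{k=0}^{T-1}\eta_k}\le \frac{D_x^2}{2\sum_{k<T}\eta_k}+\frac{G_{\max}^2\sum_{k<T}\eta_k^2}{2\sum_{k<T}\eta_k}+\frac{L_{\max}}{m}\frac{\sum_{k<T}\eta_k\mathbb E\sum_j\|\bar x_k-x_k^j\|}{\sum_{k<T}\eta_k}+D_2 .
\end{align*}
With $\beta\in(0.5,1]$ we have $\sum_k\eta_k=\infty$ and $\sum_k\eta_k^2<\infty$. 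The first two terms on the right therefore vanish as $T\to\infty$. For the third term, Lemma~\ref{lemma:exact consensus} gives $\sum_k\eta_k\|\bar x_k-x_k^j\|<\infty$. The deterministic bound of Lemma~\ref{lemma:disagreement} makes this summability hold in expectation, as the proof of Corollary~\ref{cor:convergence_rate} shows via $K_0+K_1G_{\max}\sum\eta_k^2$. Hence the third term also vanishes, and
\begin{align*}
\limsup_{T\to\infty}\frac{\sum_{k<T}\eta_k a_k}{\sum_{k<T}\eta_k}\le D_2=\order{\delta_{\max}+e_s/\delta_{\min}}.
\end{align*}

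\textbf{Step 2: from the weighted average to $\liminf a_k$.} Suppose $\liminf_k a_k> D_2+\epsilon$ for some $\epsilon>0$. Then $a_k\ge D_2+\epsilon$ for all $k\ge k_0$. Because $\sum_k\eta_k=\infty$, the finitely many early terms are washed out of the weighted average, which would then have $\liminf$ at least $D_2+\epsilon$. This contradicts Step 1, so $\liminf_k a_k\le D_2$.

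\textbf{Step 3: transfer to each agent.} By Lipschitz continuity of $\mathcal C$ (Lemma~\ref{lemma:smoothed cvar Lipschitz}),
\begin{align*}
\mathbb E[\mathcal C(x_k^i)-\mathcal C^\ast]\le a_k+L_{\max}\mathbb E\|\bar x_k-x_k^i\|.
\end{align*}
This step needs $\mathbb E\|\bar x_k-x_k^i\|\to0$, and I expect it to be the main (mild) obstacle. I would obtain it directly from Lemma~\ref{lemma:disagreement}, since the bound there is deterministic. The term $m\theta\mu^{k+1}x_{0,\max}$ tends to $0$, and the terms $\eta_k$ tend to $0$. The convolution $\sum_{l\le k}\eta_{l-1}\mu^{k+1-l}$ also tends to $0$, because $\mu<1$ and $\eta_k\to0$: split the sum at $l=k/2$. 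As a fallback, almost-sure convergence from Lemma~\ref{lemma:exact consensus}, together with boundedness by $D_x$, gives the same conclusion by dominated convergence.

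\textbf{Conclusion.} Since the extra term $L_{\max}\mathbb E\|\bar x_k-x_k^i\|$ converges to $0$, taking $\liminf$ in the Step 3 inequality gives
\begin{align*}
\liminf_{k\to\infty}\mathbb E[\mathcal C(x_k^i)-\mathcal C^\ast]\le\liminf_k a_k\le D_2 .
\end{align*}
The $\liminf$ is nonnegative because $x_k^i\in\mathcal X$, so the gap is $\order{\delta_{\max}+e_s/\delta_{\min}}$, which is \eqref{eq:local-liminf-bound}.
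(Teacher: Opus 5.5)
Your proposal is correct and follows the paper's proof essentially step for step. You use the same weighted-average bound $\limsup\le D_2$, the same contradiction argument based on $\sum_k\eta_k=\infty$ to obtain $\liminf_k\mathbb E[\mathcal C(\bar x_k)-\mathcal C^\ast]\le D_2$, and the same Lipschitz transfer to $x_k^i$. The only difference is how you get $\mathbb E\|\bar x_k-x_k^i\|\to0$: you read it directly off the deterministic bound of Lemma~\ref{lemma:disagreement}, whereas the paper uses almost-sure consensus plus dominated convergence (your fallback), and your route is valid.
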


\textit{Proof}. 
Combining the weighted estimate~\eqref{eq:sum eta c regret}, the comparator bound~\eqref{eq:scaled-comparator-decomposition}, and the disagreement estimate used in the proof of Corollary~\ref{cor:convergence_rate} gives
\begin{equation}
\limsup_{T\to\infty}
\frac{\sum_{k=0}^{T-1}\eta_k
\mathbb E\brk{\mathcal C(\bar x_k)-\mathcal C^\ast}}
{\sum_{k=0}^{T-1}\eta_k}
\leq D_2.
\label{eq:weighted-gap-limsup}
\end{equation}
We first show that this weighted bound implies
\begin{equation}
\liminf_{k\to\infty}
\mathbb E\brk{\mathcal C(\bar x_k)-\mathcal C^\ast}
\leq D_2.
\label{eq:average-liminf-bound}
\end{equation}
Suppose otherwise. Then there exist $\epsilon>0$ and $K\geq0$ such that
$\mathbb E[\mathcal C(\bar x_k)-\mathcal C^\ast]\geq D_2+\epsilon$ for every $k\geq K$. Because $\sum_k\eta_k=\infty$, the contribution of the finite prefix $k<K$ vanishes after division by $\sum_{k=0}^{T-1}\eta_k$. Consequently, the limit inferior of the weighted average in~\eqref{eq:weighted-gap-limsup} is at least $D_2+\epsilon$, contradicting~\eqref{eq:weighted-gap-limsup}. This proves~\eqref{eq:average-liminf-bound}.

By Lemma~\ref{lemma:exact consensus}, $\|x_k^i-\bar x_k\|\to0$ almost surely. Since both iterates lie in the compact set $\mathcal X_\delta$, dominated convergence further gives $\mathbb E\|x_k^i-\bar x_k\|\to0$. The Lipschitz continuity of $\mathcal C$ therefore yields
\begin{equation*}
\left|\mathbb E\brk{\mathcal C(x_k^i)-\mathcal C^\ast}
-\mathbb E\brk{\mathcal C(\bar x_k)-\mathcal C^\ast}\right|
\leq L_{\max}\mathbb E\|x_k^i-\bar x_k\|\to0.
\end{equation*}
Thus, the two expected gap sequences have the same limit inferior, and~\eqref{eq:local-liminf-bound} follows from~\eqref{eq:average-liminf-bound}.
\hfill $\qed$ 

Proposition~\ref{prop:last-iterate-liminf} ensures that the expected local optimality gap enters the $D_2$-neighborhood infinitely often, but it does not establish an ordinary last-iterate limit. 
We therefore impose the additional condition that each agent use a time-invariant sample size. 
The corresponding expected empirical objective is then fixed over time, which enables the following convergence result.

\begin{theorem}\label{theorem:last-iterate}
Suppose Assumptions~\ref{assumption:graph q}--\ref{assumption:convex} and the sampling condition~\eqref{eq:sampling requirement} hold. Let $s_k^i=s^i$ for every $i\in[m]$ and $k\geq0$, where the constants $s^i$ need not be identical across agents, and select $\eta_k=c_1/(k+1)^\beta$, where $\beta\in(0.5,1]$ and $c_1>0$. Then there exists a random point $\bar x_\infty\in\mathcal X_\delta$ such that $\bar x_k\to\bar x_\infty$ and $x_k^i\to\bar x_\infty$ almost surely for every $i\in[m]$. Moreover,
\begin{align}
 \lim_{k\to\infty}
\mathbb E\brk{\mathcal C(x_k^i)-\mathcal C^\ast}
= \order{\delta_{\max}+\frac{e_s}{\delta_{\min}}}.
\label{eq:last-iterate-limit}
\end{align}
\end{theorem}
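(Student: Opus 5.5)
With fixed sample sizes, the finite-sample bias becomes a fixed function, so the idea is to treat the algorithm as an exact stochastic subgradient method on a fixed convex surrogate. For each agent define
\[
\widetilde C^i(x):=\mathbb E\big[\widehat C^{i}(x)\big],
\]
the expectation of the empirical CVaR computed from $s^i$ i.i.d. samples at $x$. Because $s^i$ is fixed and the samples are i.i.d. from $P_i$, this function does not depend on $k$.

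\textbf{Properties of the surrogate.} The empirical CVaR is the minimum over $\tau$ of an average of terms $\tau+(J^i(x,\xi^j)-\tau)_+/\alpha_i$. Each term is jointly convex in $(x,\tau)$ by Assumption~\ref{assumption:convex}. Partial minimization over $\tau$ preserves convexity, so $x\mapsto\widehat C^i(x)$ is convex for every sample realization, and hence so is $\widetilde C^i$. The empirical CVaR is also $L_i$-Lipschitz, because an upper tail average of Lipschitz functions is Lipschitz. From the argument behind Lemma~\ref{lemma:cvar gradient error} we get $|\widetilde C^i(x)-C^i(x)|\le \sqrt{2\pi}\,U_i/(\alpha_i\sqrt{s^i})$. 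Let $\widetilde C^i_\delta$ be the ball-smoothing of $\widetilde C^i$ as in \eqref{eq:smoothed cvar}. Since the current batch is independent of $\mathcal H_k$, we have
\[
\mathbb E[\hat g_k^i\mid\mathcal F_k]=\nabla \widetilde C^i_\delta(y_k^i),
\]
so $\hat g_k^i$ is an unbiased, bounded ($\|\hat g_k^i\|\le G_i$) stochastic gradient of the convex, Lipschitz function $\widetilde C^i_\delta$. Set $F:=\frac1m\sum_i\widetilde C^i_\delta$, which is continuous on $\mathcal X_\delta$, and let $X^\star:=\arg\min_{\mathcal X_\delta}F$, which is nonempty and compact.

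\textbf{Almost-sure convergence.} I will redo the recursion \eqref{eq:iterate relation filtration} with $C_\delta^i$ replaced by $\widetilde C^i_\delta$. The error term $e_k^i$ now has conditional mean zero, so $D_1$ disappears. For any $z\in X^\star$ this gives
\[
\mathbb E[V_{k+1}(z)\mid\mathcal F_k]\le V_k(z)-2m\eta_k\big(F(\bar x_k)-F(z)\big)+2\eta_kL_{\max}\sum_j\|\bar x_k-x_k^j\|+\eta_k^2\sum_iG_i^2 .
\]
The last two terms are summable. For the $\eta_k^2$ term this holds because $\beta>1/2$. For the disagreement term, Lemma~\ref{lemma:exact consensus} gives summability pathwise, since the bound in Lemma~\ref{lemma:disagreement} is deterministic. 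Robbins--Siegmund then yields two facts: $V_k(z)$ converges almost surely for each $z$, and $\sum_k\eta_k(F(\bar x_k)-F^\star)<\infty$ almost surely.

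By Lemma~\ref{lemma:exact consensus}, $\|y_k^i-\bar x_k\|\to0$, so $\|\bar x_k-z\|$ converges almost surely. Applying this to a countable dense subset of $X^\star$ gives a single probability-one event on which convergence holds for all such $z$ at once. Since $\sum\eta_k=\infty$, we have $\liminf_k F(\bar x_k)=F^\star$, so some subsequence of $\bar x_k$ converges to a point $\bar x_\infty\in X^\star$, using compactness and continuity of $F$. Because $\|\bar x_k-z\|$ converges for all $z$ in the dense subset, and hence by continuity for all $z\in X^\star$, it must converge to $0$ for $z=\bar x_\infty$. This is the standard Opial-type argument. Consensus then gives $x_k^i\to\bar x_\infty$. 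I expect the main technical care to be here: the countable-dense-set step, and checking that the comparison points $z\in X^\star\subseteq\mathcal X_\delta$ are admissible in the projection inequality.

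\textbf{Limiting CVaR gap.} By almost-sure convergence, continuity of $\mathcal C$, boundedness, and dominated convergence,
\[
\lim_k\mathbb E[\mathcal C(x_k^i)]=\mathbb E[\mathcal C(\bar x_\infty)].
\]
Since $\bar x_\infty$ minimizes $F$ over $\mathcal X_\delta$, compare with $z_\delta=(1-\delta_{\max}/r)x^\ast$:
\[
\mathcal C(\bar x_\infty)-\mathcal C^\ast\le [\mathcal C-F](\bar x_\infty)+F(z_\delta)-\mathcal C(z_\delta)+\mathcal C(z_\delta)-\mathcal C^\ast .
\]
Each bracket $|F-\mathcal C|$ is at most $L_{\max}\delta_{\max}+\sqrt{2\pi}\max_i\{U_i/\alpha_i\}\,e_s$, using Lemma~\ref{lemma:smoothed cvar Lipschitz} together with \eqref{eq:sampling requirement}. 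The last term is at most $D_xL_{\max}\delta_{\max}/r$ by \eqref{eq:scaled-comparator-gap}. The total is $\order{\delta_{\max}+e_s}$, which is within the claimed $\order{\delta_{\max}+e_s/\delta_{\min}}$. This proves \eqref{eq:last-iterate-limit}.
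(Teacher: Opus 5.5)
Your proposal is correct. The almost-sure convergence part is essentially the paper's argument. Your $\widetilde C^i_\delta$ is exactly the paper's surrogate $\widetilde C^i$, since smoothing and batch expectation commute by Fubini. The surrogate recursion, the Robbins--Siegmund step with descent term $2m\eta_k(F(\bar x_k)-F^\star)$, and the countable-dense-subset/Opial argument are the same as in the paper.

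One small imprecision: the quantity with zero conditional mean is $\hat g_k^i-\nabla\widetilde C^i_\delta(y_k^i)$, not the paper's $e_k^i$, which is measured against the exact CVaR. Also, the smoothing part of $D_1$ disappears too, because you never pass from $\widetilde C^i_\delta$ to $C^i$ inside the recursion.

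The genuine difference is in bounding the limiting gap. The paper first bounds the gradient mismatch, $\|\nabla\widetilde{\mathcal C}-\nabla\mathcal C_\delta\|\le d\sqrt{2\pi}\max_i\frac{U_i}{\alpha_i\delta_i}e_s$, via the sphere identity. It then combines the first-order optimality condition at $\bar x_\infty$ with convexity of $\mathcal C_\delta$, which reproduces exactly the constant $D_2$ of the ergodic theorem. You instead compare function values uniformly on $\mathcal X_\delta$, using $|F-\mathcal C|\le L_{\max}\delta_{\max}+\sqrt{2\pi}\max_i(U_i/\alpha_i)e_s$, and then use the minimizer property $F(\bar x_\infty)\le F(z_\delta)$ directly.

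Your route is more elementary: it needs no gradient-level mismatch estimate and no convexity at this final stage. It is also strictly sharper. The factor $d/\delta_i$ multiplying the finite-sample error arises only when that error is pushed through the one-point gradient formula. At a minimizer you only need objective values, so you obtain $\order{\delta_{\max}+e_s}$, with no $d/\delta_{\min}$ in front of $e_s$. The paper's route buys only that the ergodic and last-iterate neighborhoods share the same constant $D_2$. Since $\delta_{\min}\le\delta_{\max}<r$, your bound is indeed contained in the claimed $\order{\delta_{\max}+e_s/\delta_{\min}}$.
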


\textit{Proof}.
Under $s_k^i=s^i$, define the expected smoothed empirical CVaR
\begin{align*}
\widetilde C^i(x)
 :=\mathbb E_{\nu,\xi_k^{i,1:s^i}}
\brk{\widehat C_k^i(x+\delta_i\nu)},
\quad \nu\sim{\rm Unif}(\mathbb B).
\end{align*}
The expectation is over both the smoothing perturbation and the fresh empirical batch. Because the batch size and its distribution are independent of $k$, $\widetilde C^i$ is a fixed deterministic function. The empirical CVaR is convex and $L_i$-Lipschitz for every sample realization, and these properties are preserved by smoothing and expectation. Moreover, the smoothing identity gives
\begin{equation}\label{eq:surrogate-unbiased-gradient}
\mathbb E\brk{\hat g_k^i\mid\mathcal F_k}
=\nabla \widetilde C^i(y_k^i).
\end{equation}
Define $\widetilde{\mathcal C}(x):=m^{-1}\sum_{i=1}^m\widetilde C^i(x)$,
$\widetilde{\mathcal X}^{\ast}:=\arg\min_{x\in\mathcal X_\delta}\widetilde{\mathcal C}(x)$,
$\widetilde{\mathcal C}^\ast:=\min_{x\in\mathcal X_\delta}\widetilde{\mathcal C}(x)$, and
$h_k:=\widetilde{\mathcal C}(\bar x_k)-\widetilde{\mathcal C}^\ast\geq0$.
For any $z\in\widetilde{\mathcal X}^{\ast}$, 
repeating the derivation of \eqref{eq:iterate relation filtration} with \eqref{eq:surrogate-unbiased-gradient} yields
\begin{align}
&\mathbb E\brk{V_{k+1}(z)\mid\mathcal F_k}
\leq V_k(z)
-2m\eta_k\brk{\widetilde{\mathcal C}(\bar x_k)
-\widetilde{\mathcal C}(z)}\nonumber\\
&\quad+2\eta_kL_{\max}
\sum_{j=1}^m\|\bar x_k-x_k^j\|
+\eta_k^2\sum_{j=1}^mG_j^2.
\label{eq:surrogate-recursion}
\end{align}
Lemma~\ref{lemma:exact consensus} and $\sum_{k=0}^\infty\eta_k^2<\infty$ imply that the last two terms of \eqref{eq:surrogate-recursion} are summable almost surely. Since $h_k\geq0$, the Robbins--Siegmund theorem, with descent term $2m\eta_kh_k$, shows that $V_k(z)$ converges almost surely and
\begin{equation}\label{eq:surrogate-gap-summable}
\sum_{k=0}^\infty\eta_kh_k<\infty
\quad\text{almost surely}.
\end{equation}
Since $\sum_k\eta_k=\infty$, \eqref{eq:surrogate-gap-summable} implies $\liminf_{k\to\infty}h_k=0$ almost surely. Thus, there is a subsequence along which $h_k\to0$; compactness of $\mathcal X_\delta$ and continuity of $\widetilde{\mathcal C}$ give a further subsequence converging to a point $\bar x_\infty\in\widetilde{\mathcal X}^{\ast}$.

We next pass from the convergent subsequence to the full sequence. Let $\mathcal D$ be a countable dense subset of the nonempty compact set $\widetilde{\mathcal X}^{\ast}$. For each fixed $z\in\mathcal D$, the Robbins--Siegmund argument above shows that $V_k(z)$ converges almost surely. Countability of $\mathcal D$ therefore gives an event on which this convergence holds for every $z\in\mathcal D$ with probability 1. 
 Double stochasticity gives
$m^{-1}\sum_i y_k^i=\bar x_k$ and hence
\begin{equation*}
V_k(z)=m\|\bar x_k-z\|^2
+\sum_{i=1}^m\|y_k^i-\bar x_k\|^2.
\end{equation*}
Moreover, Lemma~\ref{lemma:exact consensus} implies 
$\sum_{i=1}^m\|y_k^i-\bar x_k\|
\leq\sum_{j=1}^m\|x_k^j-\bar x_k\|$, 
which converges to zero almost surely.   
Thus, on this common event, $\|\bar x_k-z\|$ converges for every $z\in\mathcal D$. Because $z\mapsto\|\bar x_k-z\|$ is uniformly $1$-Lipschitz, density extends this convergence to every $z\in\widetilde{\mathcal X}^{\ast}$, including the random cluster point $\bar x_\infty$. Along the convergent subsequence, $\|\bar x_k-\bar x_\infty\|\to0$; hence the full sequence satisfies $\bar x_k\to\bar x_\infty\in\widetilde{\mathcal X}^{\ast}$ almost surely.
Lemma~\ref{lemma:exact consensus} consequently gives
$x_k^i\to\bar x_\infty$ almost surely for every $i\in[m]$.

It remains to bound the true-objective gap at this surrogate optimizer. Define $\mathcal C_\delta(x):=m^{-1}\sum_{i=1}^mC_\delta^i(x)$. The same calculation as in Lemma~\ref{lemma:cvar gradient error}, followed by the sampling condition~\eqref{eq:sampling requirement}, gives, for every $x\in\mathcal X_\delta$,
\begin{equation}\label{eq:surrogate-gradient-error}
\left\|\nabla\widetilde{\mathcal C}(x)
-\nabla\mathcal C_\delta(x)\right\|
\leq   d\sqrt{2\pi }
\max_{i\in\mathcal V}\frac{U_i}{\alpha_i\delta_i}\,e_s.
\end{equation}
Let $z_\delta=(1-\delta_{\max}/r)x^\ast$. The first-order optimality condition for $\bar x_\infty\in\widetilde{\mathcal X}^{\ast}$ gives
$\langle\nabla\widetilde{\mathcal C}(\bar x_\infty),
\bar x_\infty-z_\delta\rangle\leq0$. Thus, by convexity of $\mathcal C_\delta$ and \eqref{eq:surrogate-gradient-error},
\begin{align*}
&\mathcal C_\delta(\bar x_\infty)-\mathcal C_\delta(z_\delta)
\leq\left\langle\nabla\mathcal C_\delta(\bar x_\infty),
\bar x_\infty-z_\delta\right\rangle\\
&=\left\langle\nabla\mathcal C_\delta(\bar x_\infty)
-\nabla\widetilde{\mathcal C}(\bar x_\infty),
\bar x_\infty-z_\delta\right\rangle\\
&\quad+\left\langle\nabla\widetilde{\mathcal C}(\bar x_\infty),
\bar x_\infty-z_\delta\right\rangle\\
&\leq D_x\left\|\nabla\mathcal C_\delta(\bar x_\infty)
-\nabla\widetilde{\mathcal C}(\bar x_\infty)\right\|\\
&\leq   D_xd\sqrt{ 2\pi } 
\max_{i\in\mathcal V}\frac{U_i}{\alpha_i\delta_i}\,e_s.
\end{align*}
Finally, $|\mathcal C_\delta(x)-\mathcal C(x)|\leq L_{\max}\delta_{\max}$ and \eqref{eq:scaled-comparator-gap} imply
\begin{align}
\mathcal C(\bar x_\infty)-\mathcal C^\ast
&\leq\left|\mathcal C(\bar x_\infty)-\mathcal C_\delta(\bar x_\infty)\right|
+\mathcal C_\delta(\bar x_\infty)-\mathcal C_\delta(z_\delta)\nonumber\\
&\quad+\left|\mathcal C_\delta(z_\delta)-\mathcal C(z_\delta)\right|
+\mathcal C(z_\delta)-\mathcal C^\ast\nonumber\\
&\leq2L_{\max}\delta_{\max}
+  D_xd\sqrt{2\pi } 
\max_{i\in\mathcal V}\frac{U_i}{\alpha_i\delta_i}\,e_s\nonumber\\
&\quad+\frac{D_xL_{\max}\delta_{\max}}{r}
=D_2.
\label{eq:last-iterate-true-gap}
\end{align}
By its definition, $D_2=\order{\delta_{\max}+e_s/\delta_{\min}}$. Since $\mathcal C$ is continuous and bounded on the compact set $\mathcal X$, dominated convergence gives
\begin{equation*}
\lim_{k\to\infty}\mathbb E\brk{\mathcal C(x_k^i)-\mathcal C^\ast}
=\mathbb E\brk{\mathcal C(\bar x_\infty)-\mathcal C^\ast}
\leq D_2,
\end{equation*}
which proves \eqref{eq:last-iterate-limit}.
\hfill 
$\qed$

\begin{remark}
In the risk-neutral setting, distributed stochastic approximation can achieve almost-sure convergence to a common optimizer under diminishing step sizes and suitably controlled subgradient errors~\cite{sundhar2010distributed}. More recent work studies decentralized stochastic gradient descent with changing topologies and local updates~\cite{koloskova2020unified}, stochastic gradient tracking~\cite{pu2021distributed}, and asynchronous optimization over graphs~\cite{even2024asynchronous}.
These methods typically use first-order stochastic gradients of expected-loss objectives, and their bounds characterize the effects of gradient noise, data heterogeneity, and network communication. 
In contrast, Theorem~\ref{theorem:last-iterate} uses finite-sample CVaR estimates and fixed zeroth-order smoothing. 
 Consequently, the local iterates converge to an optimizer of the expected 
 empirical-smoothed surrogate $\widetilde{\mathcal C}$, 
 while their limiting true CVaR gap is bounded by $D_2=\order{\delta_{\max}+e_s/\delta_{\min}}$. 
 This residual neighborhood is caused by CVaR estimation and smoothing.
\end{remark}

\section{Centralized Benchmark}\label{sec:centralize}

This section introduces a centralized benchmark for comparison with the distributed method. It targets the same objective~\eqref{eq problem}, but a coordinator aggregates all local zeroth-order CVaR gradient estimates before applying a single update. We first describe the algorithm and then establish its ergodic and fixed-sample last-iterate guarantees.

\subsection{Centralized Algorithm}\label{sec:centralize alg}
At iteration $k$, the central coordinator maintains a single decision variable $x_k^{\mathrm{cen}}$. 
Each agent $i\in\mathcal V$ samples a direction $u_k^i\sim{\rm Unif}(\mathbb S^{d-1})$, 
forms $\hat{x}_{i,k}^{\rm cen}=x_k^{\rm cen}+\delta_i u_k^i$, 
and queries $J^i(\hat{x}_{i,k}^{\rm cen},\xi_k^{i,j})$ for $j=1,\ldots,s_k^i$. 
The sample sizes satisfy~\eqref{eq:sampling requirement}. 
The resulting evaluations define the empirical distribution $\hat P_k^i(z;\hat{x}_{i,k}^{\rm cen})$ as in~\eqref{eq:EDF}, 
the empirical CVaR value $\widehat C_k^i(\hat{x}_{i,k}^{\rm cen})$, and the local zeroth-order CVaR gradient estimate: 
\begin{equation}\label{eq:cent gradient estimate i}
\hat{g}_{i,k}^{\mathrm{cen}} = \frac{d}{\delta_i}
\widehat{C}_k^i (\hat{x}_{i,k}^{\rm cen})
u_k^i.
\end{equation}
The central coordinator aggregates these estimates as
\begin{equation}\label{eq:cent gradient estimate}
\hat{g}_k^{\mathrm{cen}} = 
\frac{1}{m}\sum_{i=1}^m \hat{g}_{i,k}^{\mathrm{cen}}, 
\end{equation}
and updates the decision variable via
\begin{equation}\label{eq:cent gradient descent}
x_{k+1}^{\mathrm{cen}} = 
\mathcal{P}_{\mathcal{X}_\delta}
\left[
x_k^{\mathrm{cen}}-\eta_k \hat{g}_k^{\mathrm{cen}}
\right],  
\end{equation}
with initial value $x_0^{\rm cen}\in\mathcal X_\delta$.
The zeroth-order centralized risk-averse learning algorithm is summarized in Algorithm~\ref{alg:centralize}. 
\begin{algorithm}[t]
\caption{Zeroth-order centralized risk-averse learning}
\label{alg:centralize}
\begin{algorithmic}[1]
    \Require Initial value $x_0^{\mathrm{cen}}$, iteration horizon $T$,
    smoothing parameters $\delta_i$, learning rates $\eta_k$, and risk
    levels $\alpha_i$

    \For{$k = 0,\ldots,T-1$}
        \State The coordinator broadcasts $x_k^{\mathrm{cen}}$ to all agents

        \For{$i = 1,\ldots,m$}
            \State Sample $u_k^i$ and set
            $\hat{x}_{i,k}^{\mathrm{cen}}
            = x_k^{\mathrm{cen}}+\delta_i u_k^i$

            \For{$j = 1,\ldots,s_k^i$}
                \State Play $\hat{x}_{i,k}^{\mathrm{cen}}$ and query
                $J_k^i(\hat{x}_{i,k}^{\mathrm{cen}},\xi_k^{i,j})$
            \EndFor

            \State Form the empirical distribution function
            $\hat{P}_k^i(z;\hat{x}_{i,k}^{\mathrm{cen}})$
            via \eqref{eq:EDF}
            \State Form the CVaR estimate
            $\widehat{C}(\hat{x}_{i,k}^{\mathrm{cen}})$ and its gradient
            estimate $\hat{g}_{i,k}^{\mathrm{cen}}$ as in
            \eqref{eq:cent gradient estimate i}
        \EndFor

        \State The coordinator aggregates the gradient estimates
        $\hat{g}_k^{\mathrm{cen}}$ as in
        \eqref{eq:cent gradient estimate}
        \State Update $x_{k+1}^{\mathrm{cen}}$ via
        \eqref{eq:cent gradient descent}
    \EndFor
\end{algorithmic}
\end{algorithm}

\subsection{Convergence analysis}\label{sec:centralize convergence}
We next analyze the centralized benchmark.  Because the coordinator aggregates all local estimates before updating, no network-disagreement term appears, while finite-sample CVaR error and smoothing bias remain.
Corollary~\ref{coro:centralize ergodic} gives finite-time and asymptotic-order guarantees for the weighted ergodic iterate, while Corollary~\ref{coro:centralize last-iter} establishes last-iterate convergence under fixed per-agent sample sizes.
\begin{corollary}\label{coro:centralize ergodic}
Let Assumptions~\ref{assumption:Lipschitz}--\ref{assumption:convex}, and the sampling condition~\ref{eq:sampling requirement} hold. Then, the centralized method satisfies
\begin{align}\label{eq:centralize deviation result}
&\mathbb{E}[\mathcal{C}(\hat{x}_{T}^{\text{cen}}) - \mathcal{C}(x^\ast)] \! \le \! \frac{D_{x}^{2}}{2 \sum_{k=0}^{T-1} \eta_{k}}  \!+ \! \frac{G_{\text{cen}}^{2} \sum_{k=0}^{T-1} \eta_{k}^{2}}{2 \sum_{k=0}^{T-1} \eta_{k}} \! + \! D_2  ,
\end{align}
with $G_{\text{cen}} =\frac{1}{m} \sum_{i=1}^{m} G_i$, and with $\hat{x}_{T}^{\text{cen}} = \frac{\sum_{k=0}^{T-1} \eta_{k} x_k^{\rm cen}}{\sum_{k=0}^{T-1} \eta_{k}}$.
By choosing $\eta_{k} = \frac{c_1}{(k+1)^\beta} $, with $\ \beta \in [0.5, 1]$ and  $c_1 >0$, we have 
\begin{equation*}
\mathbb{E}[\mathcal{C}(\hat{x}_T^{\text{cen}}) - \mathcal{C}(x^\ast)] = \order{\frac{\rho_\beta(T)}{\delta_{\min}^2} + \delta_{\max} + \frac{e_s}{\delta_{\min}}} .
\end{equation*}
Letting $T\to\infty$ yields
\begin{equation*}
 \mathop{\lim\sup}_{T\to \infty}\mathbb{E}[\mathcal{C}(\hat{x}_T^{\text{cen}}) - \mathcal{C}(x^\ast)] = \order{  \delta_{\max} + \frac{e_s}{\delta_{\min}}}   .
\end{equation*}
\end{corollary}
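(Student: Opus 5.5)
The plan is to run the proof of Theorem~\ref{thm:ergodic} again with a single iterate, so that every consensus term drops out. Fix $z\in\mathcal X_\delta$. By nonexpansiveness of $\mathcal P_{\mathcal X_\delta}$,
\begin{equation*}
\|x_{k+1}^{\rm cen}-z\|^2\le\|x_k^{\rm cen}-z\|^2-2\eta_k\langle\hat g_k^{\rm cen},x_k^{\rm cen}-z\rangle+\eta_k^2\|\hat g_k^{\rm cen}\|^2 .
\end{equation*}
The triangle inequality and \eqref{eq:gradient bound} give $\|\hat g_k^{\rm cen}\|\le\frac1m\sum_i G_i=G_{\rm cen}$. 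Next, split $\hat g_{i,k}^{\rm cen}=g_{i,k}^{\rm cen}+e_{i,k}^{\rm cen}$ as in \eqref{eq:cvar gradient error}. The smoothing identity \eqref{eq:two point gradient estimate} then gives $\mathbb E[g_{i,k}^{\rm cen}\mid\mathcal F_k]=\nabla C_\delta^i(x_k^{\rm cen})$. The computation in Lemma~\ref{lemma:cvar gradient error} applies unchanged, since it only uses independence of the batch from $\mathcal H_k$ and condition \eqref{eq:sampling requirement}. It yields $\frac1m\sum_i\mathbb E[\|e_{i,k}^{\rm cen}\|\mid\mathcal F_k]\le d\sqrt{2\pi}\max_i\frac{U_i}{\alpha_i\delta_i}e_s$.

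Convexity of each $C_\delta^i$ (Lemma~\ref{lemma:cvar convex}) and $\|x_k^{\rm cen}-z\|\le D_x$ give
\begin{equation*}
\mathbb E[\langle\hat g_k^{\rm cen},x_k^{\rm cen}-z\rangle\mid\mathcal F_k]\ge\mathcal C_\delta(x_k^{\rm cen})-\mathcal C_\delta(z)-D_x\,d\sqrt{2\pi}\max_i\frac{U_i}{\alpha_i\delta_i}e_s .
\end{equation*}
Applying Lemma~\ref{lemma:smoothed cvar Lipschitz} twice turns $\mathcal C_\delta$ into $\mathcal C$ at a cost of $2L_{\max}\delta_{\max}$. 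We then take total expectations. The result is the centralized analogue of \eqref{eq:iteration relation 0}, without the disagreement term and with $m$ divided out:
\begin{equation*}
\mathbb E\|x_{k+1}^{\rm cen}-z\|^2\le\mathbb E\|x_k^{\rm cen}-z\|^2-2\eta_k\mathbb E[\mathcal C(x_k^{\rm cen})-\mathcal C(z)]+2\eta_kD_1+\eta_k^2G_{\rm cen}^2 .
\end{equation*}

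Now set $z=z_\delta=(1-\delta_{\max}/r)x^\ast\in\mathcal X_\delta$. Telescoping over $k=0,\dots,T-1$ and bounding $\|x_0^{\rm cen}-z_\delta\|^2\le D_x^2$ gives a weighted-average estimate. We divide it by $2\sum_k\eta_k$ and use \eqref{eq:scaled-comparator-gap}, which adds $D_xL_{\max}\delta_{\max}/r$ and turns $D_1$ into $D_2$. Jensen's inequality applied to the convex $\mathcal C$ at $\hat x_T^{\rm cen}$ then gives \eqref{eq:centralize deviation result}. For the rates, we reuse the step-size sums from the proof of Corollary~\ref{cor:convergence_rate}, together with $G_{\rm cen}\le G_{\max}=\order{1/\delta_{\min}}$ and $D_2=\order{\delta_{\max}+e_s/\delta_{\min}}$. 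These give the $\rho_\beta(T)/\delta_{\min}^2$ transient, and since $\rho_\beta(T)\to0$, the $\limsup$ claim follows.

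The one step that needs care is the reuse of Lemma~\ref{lemma:cvar gradient error}. In the centralized scheme the perturbation points $\hat x_{i,k}^{\rm cen}$ all depend on the common iterate, so we must confirm that, conditional on $\mathcal H_k$, the batches are still fresh and independent. We must also confirm that each $\hat x_{i,k}^{\rm cen}$ lies in $\mathcal X$; this follows from $\mathcal X_\delta\oplus\delta_i\mathbb B\subseteq\mathcal X$. Everything else is a simplification of the distributed proof, because Assumptions~\ref{assumption:graph q}--\ref{assumption:graph} are no longer needed.
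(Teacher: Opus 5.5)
Your proposal is correct and follows the paper's proof essentially step for step. The common chain is: projection nonexpansiveness with $\|\hat g_k^{\rm cen}\|\le G_{\rm cen}$, the split into the exact-CVaR estimate plus the DKW-controlled error from Lemma~\ref{lemma:cvar gradient error}, convexity of $\mathcal C_\delta$ with the $2L_{\max}\delta_{\max}$ smoothing cost, telescoping at $z_\delta$, Jensen's inequality, and the comparator gap \eqref{eq:scaled-comparator-gap} to pass from $D_1$ to $D_2$, with the rates inherited from Corollary~\ref{cor:convergence_rate}; the two checks you flag (fresh batches conditional on $\mathcal H_k$ and $\hat x_{i,k}^{\rm cen}\in\mathcal X$) are exactly what the paper uses implicitly when it says the error bound follows ``by following the proof of Lemma~\ref{lemma:cvar gradient error}''.
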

\textit{Proof.} 
See Appendix.
\hfill $\qed$
\begin{corollary}\label{coro:centralize last-iter}
Let Assumptions~\ref{assumption:Lipschitz}--\ref{assumption:convex} 
and the sampling condition~\eqref{eq:sampling requirement} hold, 
let $s_k^i=s^i$ for every $i$ and $k$, and select $\eta_k=c_1/(k+1)^\beta$, 
where $\beta\in(0.5,1]$ and $c_1>0$. 
Then there exists a random point $x_\infty^{\rm cen}\in \mathcal X_\delta$ 
such that $x_k^{\rm cen}\to x_\infty^{\rm cen}$ almost surely and
\begin{align*}
 \lim_{k\to\infty}\mathbb{E}[\mathcal C(x_k^{\rm cen})-\mathcal C^\ast] 
=\order{\delta_{\max}+e_s/\delta_{\min}}. 
\end{align*}
\end{corollary}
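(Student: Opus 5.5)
The plan is to mirror the proof of Theorem~\ref{theorem:last-iterate} with the network-disagreement terms removed, since the centralized iteration is the special case of a single decision variable. With $s_k^i=s^i$ fixed, I would reuse the deterministic surrogate $\widetilde C^i$ and $\widetilde{\mathcal C}=m^{-1}\sum_i\widetilde C^i$ from that proof, which are convex and $L_i$-Lipschitz. I would also let $\mathcal F_k$ be generated by $x_0^{\rm cen}$ and all directions and samples before iteration $k$. By \eqref{eq:surrogate-unbiased-gradient}, applied agentwise at the common point $x_k^{\rm cen}$ and averaged over $i$, we get
\begin{equation*}
\mathbb E\brk{\hat g_k^{\rm cen}\mid\mathcal F_k}=\nabla\widetilde{\mathcal C}(x_k^{\rm cen}).
\end{equation*}
In addition, \eqref{eq:gradient bound} gives $\|\hat g_k^{\rm cen}\|\le G_{\rm cen}$.

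First, for any $z\in\widetilde{\mathcal X}^\ast$, nonexpansiveness of $\mathcal P_{\mathcal X_\delta}$ and convexity of $\widetilde{\mathcal C}$ give the recursion
\begin{equation*}
\mathbb E\brk{\|x_{k+1}^{\rm cen}-z\|^2\mid\mathcal F_k}\le\|x_k^{\rm cen}-z\|^2-2\eta_k\brk{\widetilde{\mathcal C}(x_k^{\rm cen})-\widetilde{\mathcal C}^\ast}+\eta_k^2G_{\rm cen}^2 .
\end{equation*}
Because $\sum_k\eta_k^2<\infty$ for $\beta\in(0.5,1]$, the Robbins--Siegmund theorem yields two facts: $\|x_k^{\rm cen}-z\|$ converges almost surely, and $\sum_k\eta_k h_k<\infty$ almost surely, where $h_k=\widetilde{\mathcal C}(x_k^{\rm cen})-\widetilde{\mathcal C}^\ast$. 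Since $\sum_k\eta_k=\infty$, we have $\liminf_k h_k=0$. Compactness of $\mathcal X_\delta$ and continuity of $\widetilde{\mathcal C}$ then give a subsequence converging to a random $x_\infty^{\rm cen}\in\widetilde{\mathcal X}^\ast$.

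The main obstacle is upgrading this subsequential limit to almost-sure convergence of the full sequence, because $x_\infty^{\rm cen}$ is random. I would take a countable dense set $\mathcal D\subset\widetilde{\mathcal X}^\ast$ and intersect the probability-one events on which $\|x_k^{\rm cen}-z\|$ converges for each $z\in\mathcal D$. Since $z\mapsto\|x_k^{\rm cen}-z\|$ is $1$-Lipschitz uniformly in $k$, convergence extends to all $z\in\widetilde{\mathcal X}^\ast$, including $x_\infty^{\rm cen}$. Along the subsequence this distance tends to zero, so the full sequence converges to $x_\infty^{\rm cen}$. This step is simpler than in the distributed case, because no decomposition of $V_k$ into average and disagreement parts is needed.

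Finally, I would bound the true gap at $x_\infty^{\rm cen}$ exactly as in \eqref{eq:last-iterate-true-gap}. The gradient-mismatch bound \eqref{eq:surrogate-gradient-error} holds at every $x\in\mathcal X_\delta$, and $x_\infty^{\rm cen}$ satisfies the first-order optimality condition against $z_\delta=(1-\delta_{\max}/r)x^\ast$. Combining these with convexity of $\mathcal C_\delta$ gives
\begin{equation*}
\mathcal C_\delta(x_\infty^{\rm cen})-\mathcal C_\delta(z_\delta)\le D_xd\sqrt{2\pi}\max_i\frac{U_i}{\alpha_i\delta_i}e_s .
\end{equation*}
Adding the smoothing bias from Lemma~\ref{lemma:smoothed cvar Lipschitz} and the comparator gap \eqref{eq:scaled-comparator-gap} gives $\mathcal C(x_\infty^{\rm cen})-\mathcal C^\ast\le D_2$. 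Because $\mathcal C$ is continuous and bounded on compact $\mathcal X$, dominated convergence then gives $\lim_k\mathbb E[\mathcal C(x_k^{\rm cen})-\mathcal C^\ast]\le D_2=\order{\delta_{\max}+e_s/\delta_{\min}}$.
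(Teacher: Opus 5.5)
Your proposal is correct and follows essentially the same route as the paper's proof: the fixed-sample surrogate $\widetilde{\mathcal C}$ with $\mathbb E[\hat g_k^{\rm cen}\mid\mathcal F_k^{\rm cen}]=\nabla\widetilde{\mathcal C}(x_k^{\rm cen})$, the Robbins--Siegmund recursion, the countable-dense-set (quasi-Fej\'er) upgrade from subsequential to full almost-sure convergence, the gradient-mismatch bound \eqref{eq:surrogate-gradient-error} at $z_\delta$, and dominated convergence. You actually write out the full-sequence convergence step more explicitly than the paper's appendix does, since the paper only points back to the argument in the proof of Theorem~\ref{theorem:last-iterate}.
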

\textit{Proof.} 
See Appendix.
\hfill $\qed$
\begin{remark}   
The distributed and centralized finite-time bounds have the same dependence on $T$, the smoothing radii, and the sampling accuracy. However, they are not equal because the distributed bound contains graph-dependent constants arising from network disagreement, whereas the centralized bound does not.
\end{remark}

\section{Simulations}\label{sec:simulation}
This section evaluates the proposed method on a distributed estimation problem~\cite{xu2017convergence} and examines the effects of graph connectivity, smoothing, and sample size.
Each sensor observes an unknown parameter $x^{\mathrm{true}}\in\mathcal X:=\{x\in\mathbb R^{10}:\|x\|_\infty\leq10\}$. At iteration $k$, its $j$th independent measurement is
\begin{equation*}
z_k^{i,j}=A_i x^{\mathrm{true}}+w_k^{i,j},
\end{equation*}
where $A_i\in\mathbb R^{10\times10}$ is fixed throughout the experiment. The entries of $A_i$ and the components of $x^{\mathrm{true}}$ are initially sampled from $\mathcal N(0,1)$, with $x^{\mathrm{true}}$ projected onto $\mathcal X$. The components of $w_k^{i,j}$ are sampled independently from $\mathcal N(0,0.01^2)$ truncated to $[-10,10]$. Consequently, the measurement distribution has bounded support and, since $\mathcal X$ is compact and the matrices $A_i$ are fixed, the simulated loss is bounded.
For a candidate decision $x$, agent $i$ uses the regularized least-squares loss
\begin{equation*}
J^i(x,z^i)=\frac12\|z^i-A_i x\|^2+\frac{\lambda}{2}\|x\|^2,
\end{equation*}
where $\lambda=10^{-4}$ is the regularization parameter. We therefore solve
\begin{align*}
\mathop{\min}_{x\in\mathcal X}\ \frac1m\sum_{i=1}^m
{\rm CVaR}_{\alpha_i}\brk{J^i(x,z^i)}.
\end{align*}
We set $\alpha_i=0.5$ for every agent. 
We compute the reference solution $x^\ast$ by minimizing an empirical CVaR objective constructed from 8192 measurements per agent.
At every iteration, 
the CVaR empirical gap $\widehat{\mathcal C}_k(x)-\widehat{\mathcal C}_k(x^\ast)$ 
is evaluated on the sampled batch, 
with $x=\bar x_k$ for the distributed method and $x=x_k^{\rm cen}$ for the centralized method. 
It is therefore an empirical performance measure rather than the true CVaR optimality gap
and may occasionally be negative.
Each experiment consists of 20 independent Monte Carlo trials. 
The solid lines represent the trial averages, 
and the shaded regions depict $\pm1$ standard deviation.

Figure~\ref{fig:graph} compares Algorithm~\ref{alg:zeroth-order} across several static graphs and against the centralized benchmark in Algorithm~\ref{alg:centralize}. The network contains 16 agents and uses one of four undirected topologies: (1) an Erd\H{o}s--R\'enyi graph with edge probability $p=0.4$;
(2) a ring graph, in which each agent is connected to exactly two neighbors; 
(3) a two-dimensional grid graph; and (4) a complete graph.
The matrix $W_k$ is
constructed using the Metropolis rule and is doubly stochastic. Specifically,
its entries are given by
\begin{align*}
w_k^{ij} =\left\{
\begin{array}{ll}
\dfrac{1}{\max\{d_k^i,d_k^j\}+1},
& (i,j)\in\mathcal{E}_k, i\neq j \\
  1-\displaystyle\sum_{j\in\mathcal{N}_k^i\setminus\{i\}}w_k^{ij},
& i=j,\\
0,
& \text{otherwise},
\end{array}\right., 
 \end{align*}
where $d_k^i$ denotes the degree of agent $i$ at time $k$. 
We use $\delta^i=0.4$, $\eta_k=0.004/(k+1)^{0.55}$, $s_k^i=64$, and 10000 iterations. 
We compare the methods using the consensus error $m^{-1}\sum_{i=1}^{m}\|x_k^i-\bar{x}_k\|^2$, 
optimization error $\|\bar{x}_k-x^\ast\|^2$, total-state error $m^{-1}\sum_{i=1}^{m}\|x_k^i-x^\ast\|^2$, 
and the paired empirical CVaR gap defined above. 
For the centralized method, the optimization and total-state errors both reduce to $\|x_k^{\rm cen}-x^\ast\|^2$.  
The results indicate that stronger connectivity primarily improves the transient consensus rate: the complete and Erd\H{o}s--R\'enyi graphs reach consensus faster than the grid and ring graphs, with the ring exhibiting the slowest decay.
For the complete graph, the Metropolis matrix equals $\frac{1}{m}\mathbf 1\mathbf 1^\top$, 
so every agent evaluates its local estimate from the common network average after each mixing step. 
Its optimization, total-state, and empirical CVaR-gap curves therefore nearly overlap the centralized curves 
and are omitted from panels~(b)--(d) for visual clarity. 
The two methods need not coincide exactly because the distributed method projects the local updates separately, 
whereas the centralized method projects their aggregated update. 

Fig.~\ref{fig:connectivity} evaluates Algorithm~\ref{alg:zeroth-order} 
over periodic time-varying communication networks with window lengths $q\in\{1,2,5,10\}$. 
For each $q$, the edges of a complete graph are partitioned among $q$ consecutive instantaneous graphs. 
Hence, the union of every $q$ consecutive graphs is complete. 
We use $\delta^i=1$, $\eta_k=0.01/(k+1)^{0.55}$, $s_k^i=256$, 
and 5000 iterations. 
The trajectories remain stable even when individual communication graphs are temporarily disconnected. As shown in Fig.~\ref{fig:connectivity}, increasing $q$ generally slows the transient decay of both the total state error and the empirical CVaR gap,  with the clearest degradation occurring at $q=10$; the $q=1$ and $q=2$ curves remain close and occasionally cross.

\begin{figure}[t]
\centering
\subfigure[Consensus error]{\label{fig:graph_consensus}
\includegraphics[width=0.234\textwidth]{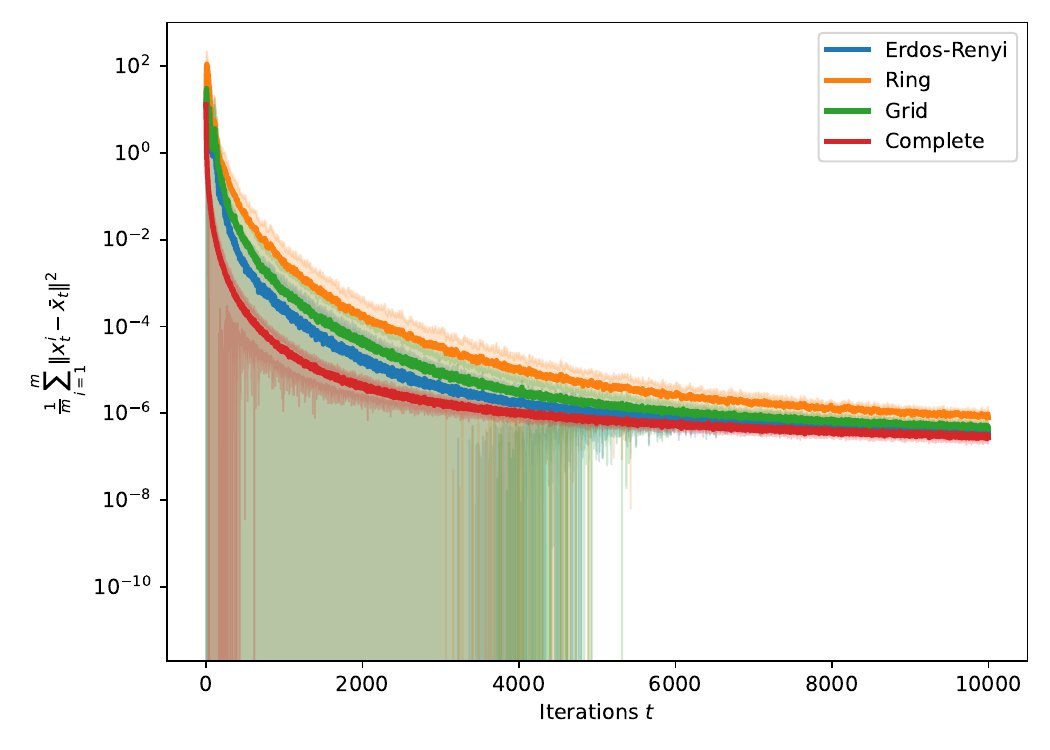}}
\hspace{-7pt}
\subfigure[Optimization error]{\label{fig:graph_optimization}
\includegraphics[width=0.234\textwidth]{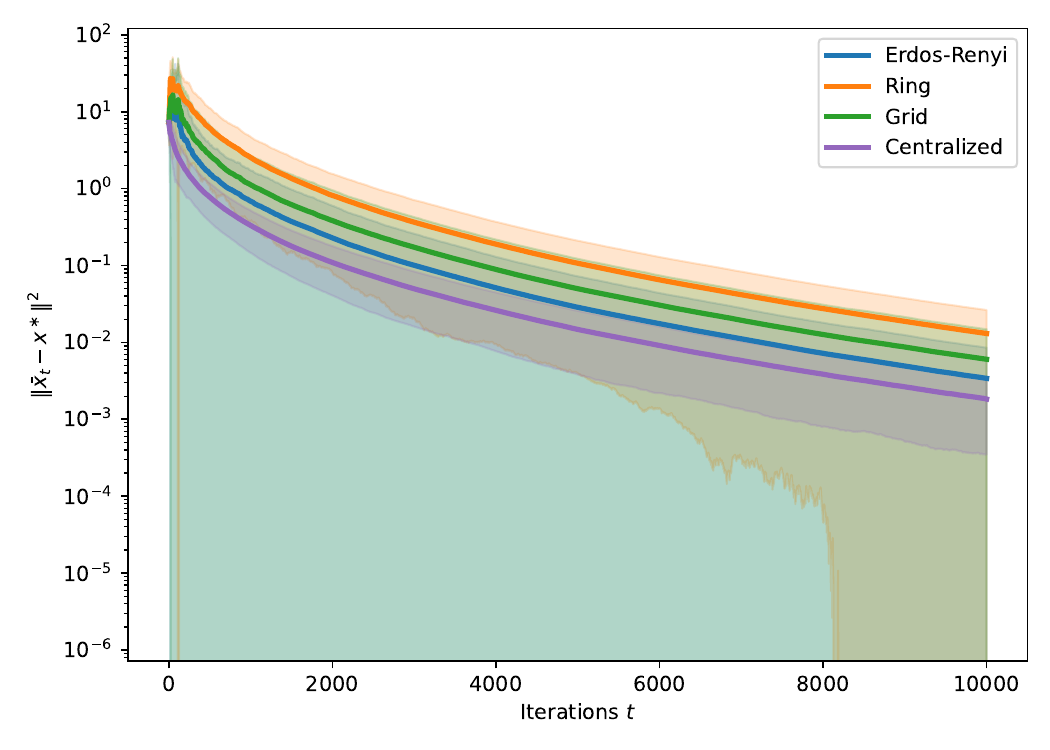}}
\vspace{-7pt}
\subfigure[Total state error]{\label{fig:graph_state}
\includegraphics[width=0.23\textwidth]{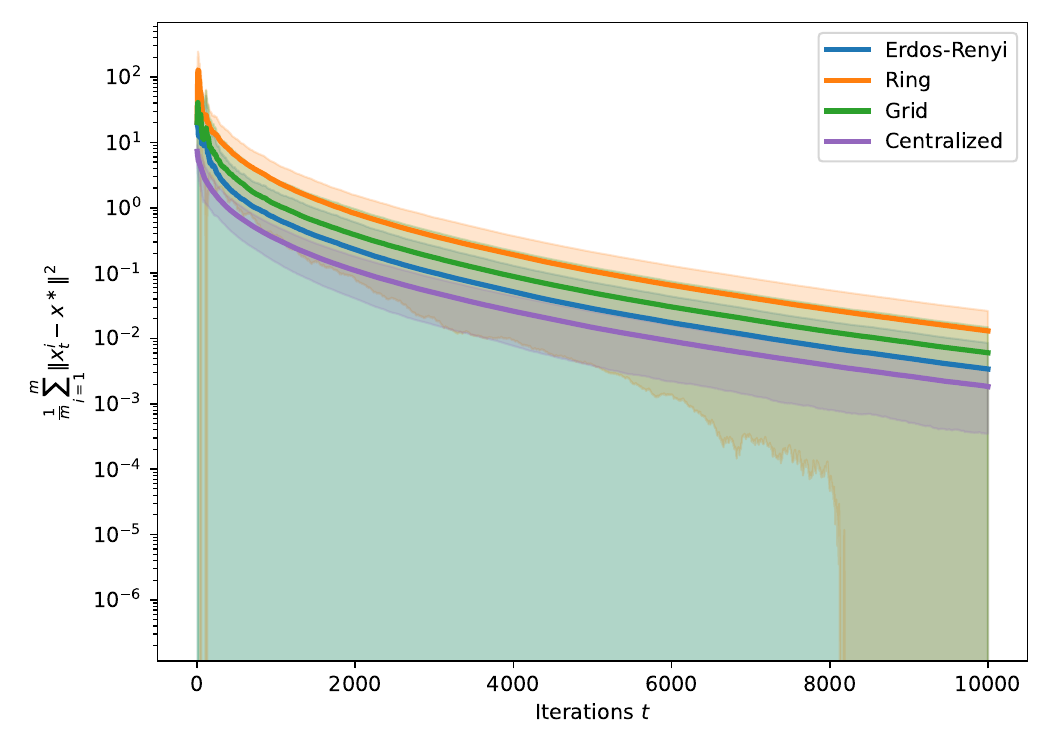}} 
\hspace{-5pt}
\subfigure[ Empirical CVaR gap ]{\label{fig:graph_cvar}
\includegraphics[width=0.23\textwidth]{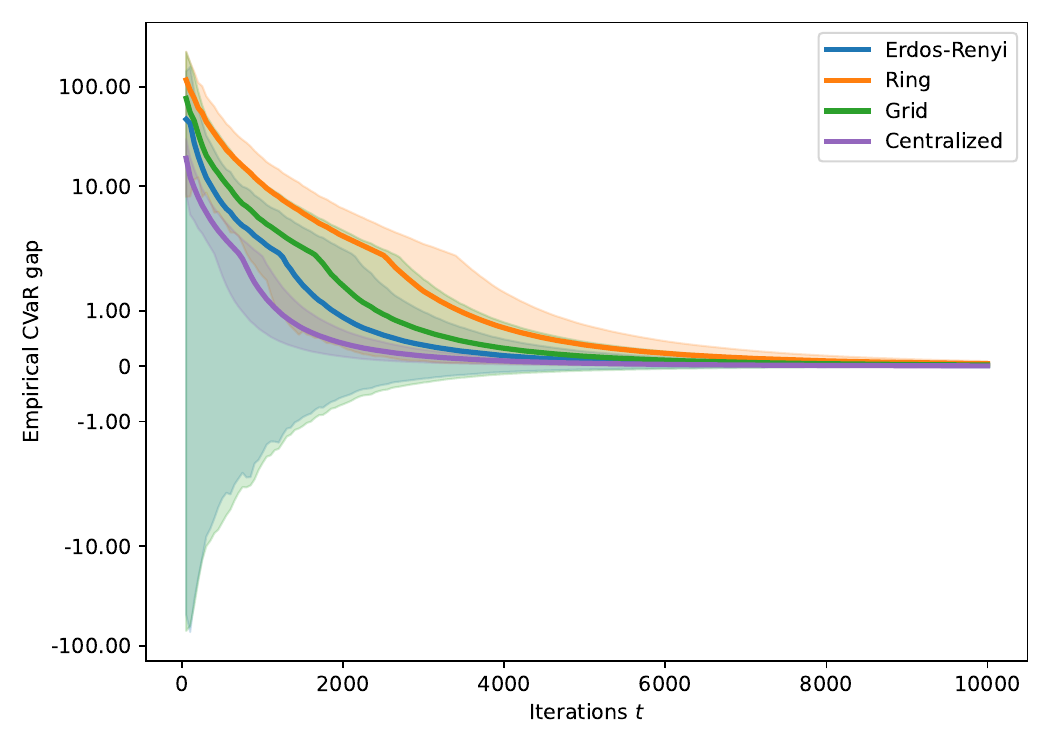}}
\vspace{-5pt}
\caption{ Performance under different communication graphs and comparison with the centralized benchmark. 
Panel~(a) includes all four distributed topologies. 
Panels~(b)--(d) omit the complete-graph curves because they nearly overlap the centralized curves. 
The centralized benchmark represents full information aggregation.
The Erd\H{o}s--R'enyi, grid, and ring graphs exhibit progressively weaker connectivity.} \label{fig:graph}
\end{figure}

\begin{figure}[t]
\centering
\subfigure[Total state error]{\label{fig:connect_state}
\includegraphics[width=0.23\textwidth]{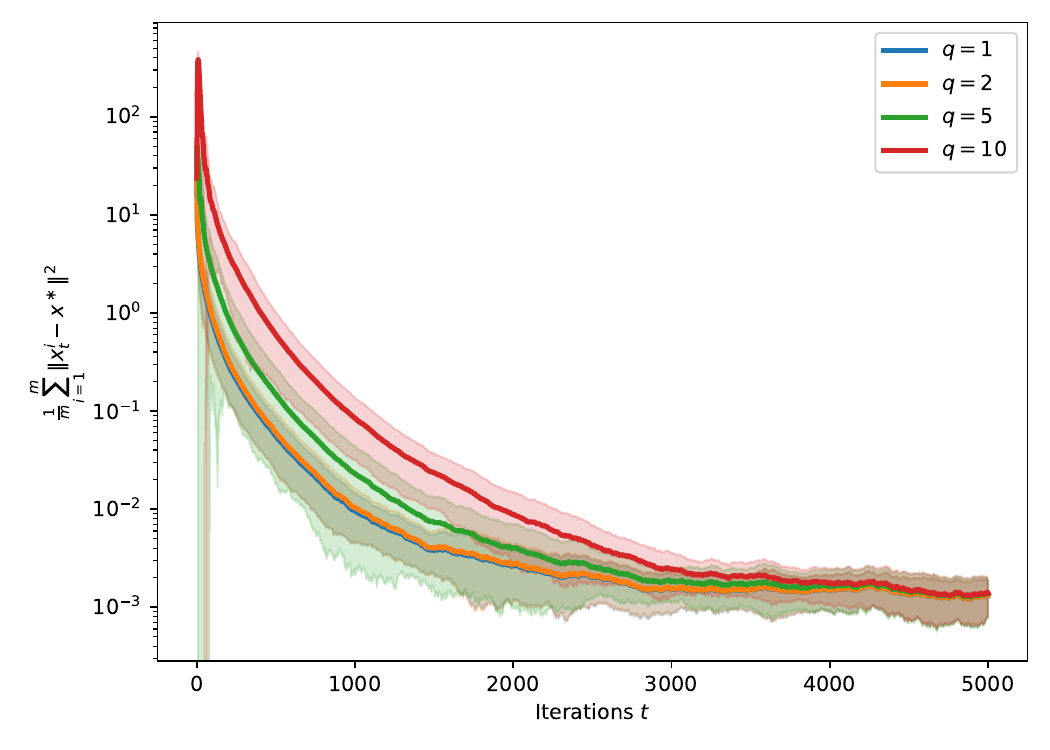}}
\hspace{-5pt}
\subfigure[Empirical CVaR gap]{\label{fig:connect_cvar}
\includegraphics[width=0.23\textwidth]{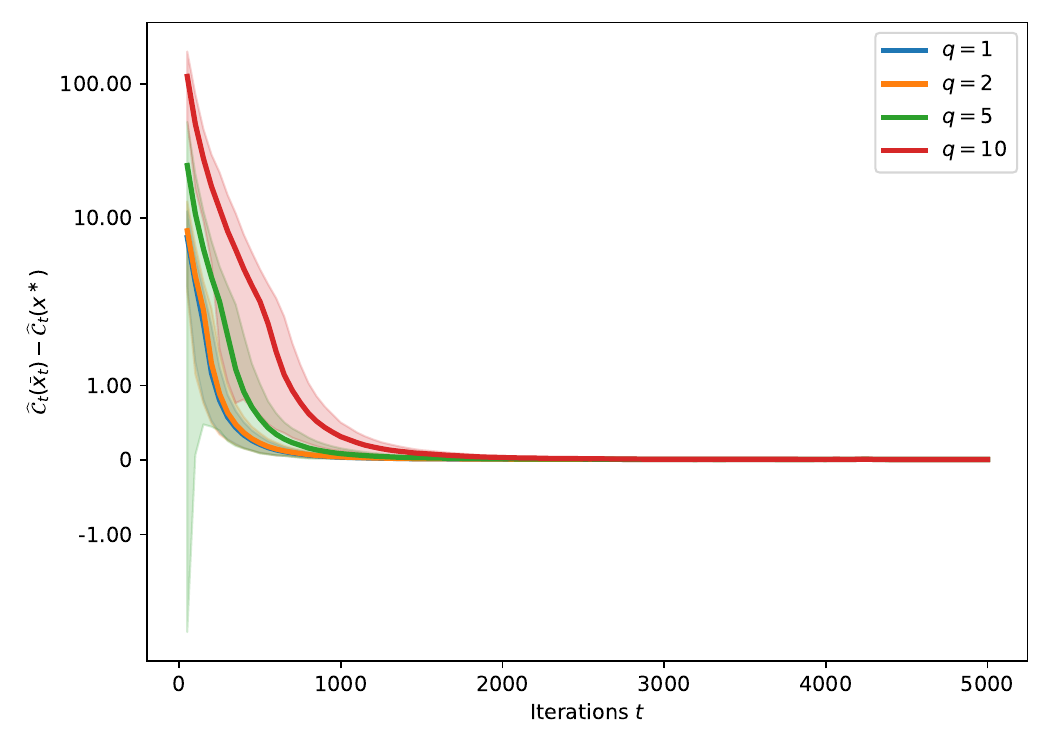}}
\vspace{-5pt}
\caption{ Performance of Algorithm~\ref{alg:zeroth-order} 
over periodic graph sequences whose $q$-step union is complete. 
Larger connectivity windows generally slow transient convergence.}\label{fig:connectivity}
\end{figure}

\begin{figure}[t]
\centering
\subfigure[Total state error]{\label{fig:delta_state}
\includegraphics[width=0.23\textwidth]{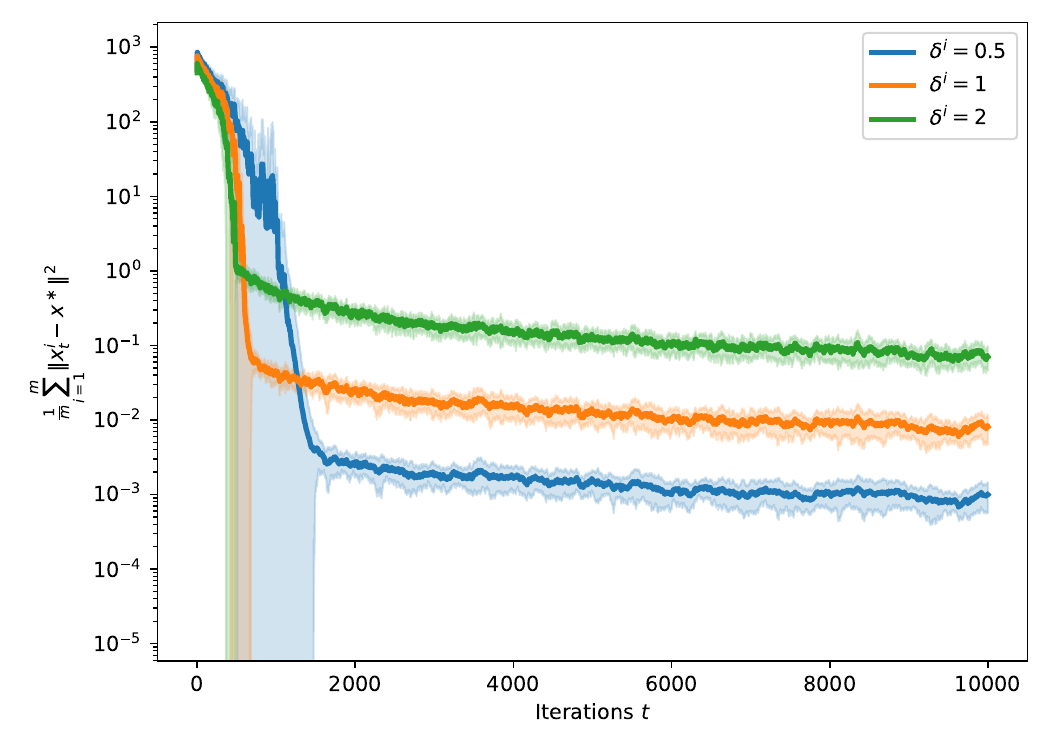}}
\hspace{-7pt}
\subfigure[ Empirical CVaR gap ]{\label{fig:delta_cvar}
\includegraphics[width=0.23\textwidth]{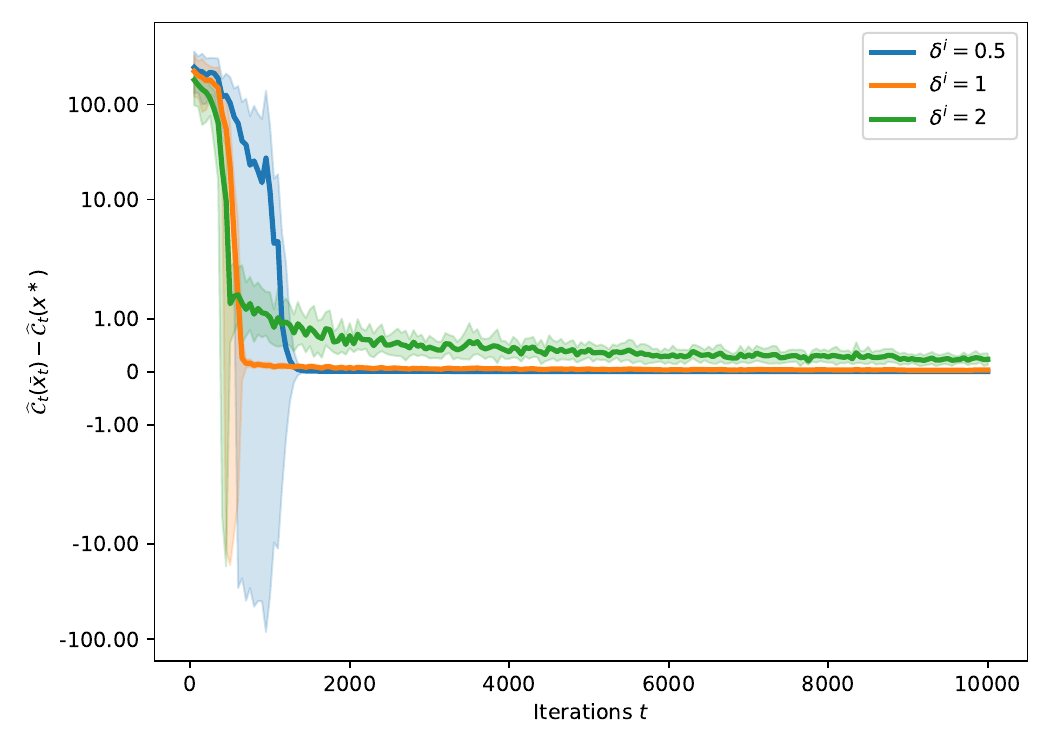}}
\vspace{-10pt}
\caption{Effect of the smoothing radius over a fixed Erd\H{o}s--Rényi graph.
Larger radii with correspondingly larger step-size coefficients accelerate transient convergence, 
but lead to larger long-run error neighborhoods.}
\label{fig:delta}
\vspace{-10pt}
\end{figure}

\begin{figure}[t]
\centering
\subfigure[Total state error]{\label{fig:sample_state}
\includegraphics[width=0.23\textwidth]{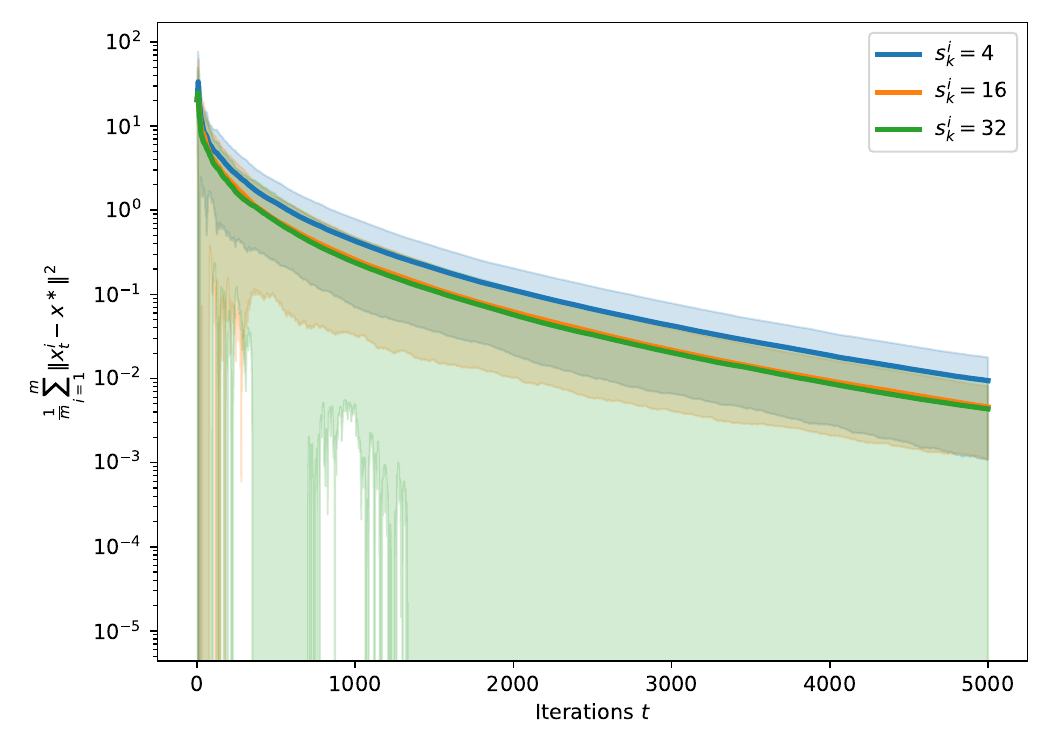}}
\hspace{-7pt}
\subfigure[ Empirical CVaR gap]{\label{fig:sample_cvar}
\includegraphics[width=0.23\textwidth]{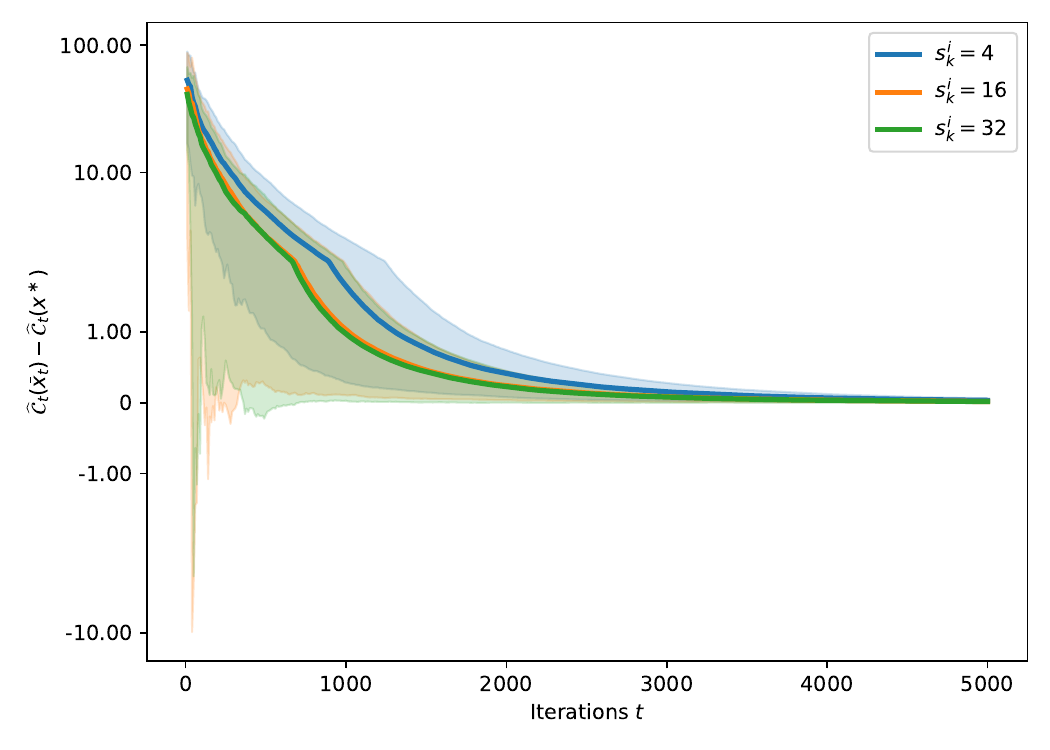}}
\vspace{-7pt}
\caption{ Effect of the CVaR sample size over a fixed Erd\H{o}s--Rényi graph. 
A larger sample size reduces the finite-sample variability of the CVaR-based zeroth-order estimator 
and accelerates transient convergence.} 
\label{fig:sample}
\vspace{-7pt}
\end{figure}

In Figs.~\ref{fig:delta} and~\ref{fig:sample}, 
we further investigate the effects of the smoothing radius and sample size. 
Both experiments use the same fixed Erdős--Rényi graph model with edge probability $p=0.4$ as in Fig.~\ref{fig:graph}. 
Fig.~\ref{fig:delta} considers $\delta^i\in\{0.5,1,2\}$ 
with the corresponding step-size coefficients $c_0\in\{0.05,0.1,0.2\}$, respectively, $\beta=0.55$, $s_k^i=256$, 
and 10000 iterations. 
The results reveal a trade-off between transient speed and asymptotic accuracy. 
Larger smoothing radii reduce the variance of the one-point estimator and, 
together with the correspondingly larger step-size coefficients, accelerate the transient decrease. 
However, they also lead to larger long-run error neighborhoods, consistent with the smoothing-bias term in the theoretical bound.

Fig.~\ref{fig:sample} compares $s_k^i\in\{4,16,32\}$ for every agent, using $\delta^i=0.5$, 
$\eta_k=0.005/(k+1)^{0.55}$, and 5000 iterations. 
Over this finite horizon, larger sample sizes reduce the variability of the empirical CVaR estimator, 
accelerate the transient decrease, and yield lower total-state error levels. 
The empirical CVaR-gap curves approach zero and become close at later iterations.
This behavior is qualitatively consistent with the dependence on sample size established in Theorems~\ref{thm:ergodic}--\ref{theorem:last-iterate}.

\section{Conclusion}\label{sec:5_conc}

This paper developed a zeroth-order method for distributed risk-averse stochastic convex optimization over time-varying networks. 
Using only noisy local loss evaluations, 
each agent constructs an empirical CVaR value and a one-point estimate of a smoothed CVaR gradient. 
Under standard convexity, Lipschitz continuity, and connectivity assumptions, 
we established a finite-time expected optimality gap bound for the weighted ergodic iterate and proved exact asymptotic consensus. 
For time-varying sample sizes, the expected local last-iterate gaps satisfy a limit-inferior guarantee. 
Furthermore, with fixed sample sizes, 
the local iterates converge almost surely to a common optimizer of the expected empirical smoothed surrogate, 
and their limiting expected true CVaR gaps are of order $\delta_{\max}+e_s/\delta_{\min}$.
The centralized benchmark has the same qualitative dependence on the horizon, smoothing radii, 
and sampling accuracy, 
whereas the distributed finite-time bound also contains graph-dependent constants arising from network disagreement. 
To the best of our knowledge, 
this is the first study of distributed stochastic convex optimization with CVaR objectives over networks. The simulations illustrate the efficacy of the method.

\section{Appendix}\label{sec:appendix}
This appendix collects auxiliary CVaR and consensus results and provides the proofs omitted from the main text. 
 \begin{remark} 
 The formulation allows each agent to have an individual risk level $\alpha_i$. If all agents instead share a common level $\alpha$, subadditivity of CVaR gives ${\rm CVaR}_\alpha[\sum_{i=1}^mJ^i]\le\sum_{i=1}^m{\rm CVaR}_\alpha[J^i]$.
The equality holds if and only if the random variables $J_1, J_2, \dots, J_m$ are positively correlated and move entirely in tandem. A trivial special case is that the random variables are exactly identical.  
In quantitative finance, the difference $ \sum_{i=1}^m \text{CVaR}_\alpha(J^i) - \text{CVaR}_\alpha\left(\sum_{i=1}^m J^i\right)$ is known as the diversification price, which represents the cost the system incurs to enable distributed computation \cite{mcneil2015quantitative}.
\end{remark}

The following lemma bounds the difference between two CVaR values in terms of the corresponding distribution functions. 
It will be used to control the finite-sample gradient-estimation error.
\begin{lemma}\cite[Lemma 3]{wang2022risk}\label{lemma:cvar-estimation error bound} 
Let $F$ and $G$ be the cumulative distribution functions of two random variables supported on $[-U,U]$. 
Given a risk level $\alpha$, we have 
\begin{equation*} 
    \Big|\text{CVaR}_\alpha[F] - \text{CVaR}_\alpha [G] \Big| \le \frac{2U}{\alpha} \sup_{x} \big|F(x)-G(x)\big|. 
\end{equation*} 
\end{lemma}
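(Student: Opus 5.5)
The plan is to pass through the Rockafellar--Uryasev representation and reduce the CVaR comparison to a comparison of quantile functions, since $F$ and $G$ enter the statement only through their distribution functions. Write $\mathrm{CVaR}_\alpha[F]=\frac{1}{\alpha}\int_{1-\alpha}^{1}F^{-1}(p)\,dp$, with $F^{-1}$ the generalized (left-continuous) quantile. This agrees with the minimization formula used in the paper for the upper tail. Then
\[
\Big|\mathrm{CVaR}_\alpha[F]-\mathrm{CVaR}_\alpha[G]\Big|\le\frac1\alpha\int_{1-\alpha}^{1}\big|F^{-1}(p)-G^{-1}(p)\big|\,dp ,
\]
so it suffices to bound the integral of the quantile difference by $2U\sup_x|F(x)-G(x)|$.

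To get this bound, I would use the layer-cake identity on the bounded support $[-U,U]$. For each $p$,
\[
F^{-1}(p)=-U+\int_{-U}^{U}\mathbf 1\{F(x)<p\}\,dx ,
\]
and similarly for $G$. Hence
\[
\big|F^{-1}(p)-G^{-1}(p)\big|\le\int_{-U}^{U}\big|\mathbf 1\{F(x)<p\}-\mathbf 1\{G(x)<p\}\big|\,dx .
\]
Integrating over $p\in[1-\alpha,1]$ and applying Fubini, the inner $p$-integral of the indicator difference equals the Lebesgue measure of the set of $p$ lying between $F(x)$ and $G(x)$ and inside $[1-\alpha,1]$. That measure is at most $|F(x)-G(x)|$. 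This gives
\[
\int_{1-\alpha}^1\big|F^{-1}-G^{-1}\big|\,dp\le\int_{-U}^{U}|F(x)-G(x)|\,dx\le 2U\sup_x|F(x)-G(x)| ,
\]
and dividing by $\alpha$ yields the claimed inequality.

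The main obstacle is conventions. The paper's CVaR is the upper-tail Rockafellar--Uryasev form with $\tau+\frac1\alpha\mathbb E(J-\tau)_+$, so I have to check that it equals the average of the top-$\alpha$ quantiles. I would verify this by taking $\tau$ to be the $(1-\alpha)$-quantile, which is a minimizer. Empirical distributions have atoms, and the quantile identity must still hold there; the layer-cake formula with strict inequality handles this correctly.

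An alternative route avoids quantiles altogether. Take the minimizer $\tau_G\in[-U,U]$ for $G$, and use
\[
\mathbb E(X-\tau)_+=\int_\tau^U\big(1-F(x)\big)\,dx .
\]
Then $\mathrm{CVaR}[F]-\mathrm{CVaR}[G]\le\frac1\alpha\int_{\tau_G}^U|F-G|\,dx\le\frac{2U}{\alpha}\sup|F-G|$, and the symmetric argument with $\tau_F$ gives the other direction. I would present this simpler argument as the actual proof, since it only needs minimizers in $[-U,U]$, which the boundedness of the support guarantees.
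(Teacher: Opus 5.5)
Your proof is correct in both versions. The paper gives no proof of this lemma; it only cites \cite{wang2022risk} and works with the Rockafellar--Uryasev definition. So the route closest to the paper's setup is your alternative one. Fix a minimizer $\tau_G\in[-U,U]$; one exists because the objective equals $\tau$ for $\tau\ge U$ and has slope $1-1/\alpha<0$ for $\tau\le -U$. Then use $\mathbb E(X-\tau)_+=\int_\tau^U(1-F(x))\,dx$, compare the two objectives at $\tau_G$, and swap the roles of $F$ and $G$.

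Your quantile route is also sound. The identity $F^{-1}(p)=-U+\int_{-U}^{U}\mathbf 1\{F(x)<p\}\,dx$ holds for $p\in(0,1]$ by right-continuity of $F$, and the Fubini step correctly bounds the relevant $p$-measure by $|F(x)-G(x)|$. It does cost an extra step: you must show that the Rockafellar--Uryasev minimum equals $\frac{1}{\alpha}\int_{1-\alpha}^{1}F^{-1}(p)\,dp$, including when the distributions have atoms, as the empirical ones used later in the paper do.

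The payoff of the two routes is the same. Both pass through the sharper intermediate bound $\frac{1}{\alpha}\int_{-U}^{U}|F(x)-G(x)|\,dx$ before the final estimate by $\frac{2U}{\alpha}\sup_x|F(x)-G(x)|$. Presenting the minimizer-swap argument as the proof, as you plan, is the better choice.
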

\textit{Proof of Lemma~\ref{lemma:cvar gradient error}.}
We bound the aggregate conditional error $\sum_{i=1}^m\mathbb E[\|e_k^i\|\mid\mathcal F_k]$. Because $\hat y_k^i$ depends on the current perturbation, define $\mathcal H_k:=\mathcal F_k\vee\sigma(u_k^1,\ldots,u_k^m)$.
The Dvoretzky--Kiefer--Wolfowitz inequality~\cite{dvoretzky1956asymptotic} gives, for every $t\ge0$,
\begin{equation*}
    \mathbb{P}\left\{ \sup_{z} |\hat{P}_k^i(z;\hat{y}_k^i) - P_k^i(z;\hat{y}_k^i)| \ge t | \mathcal{H}_k \right\}  \le   2e^{-2s_k^it^2}, 
\end{equation*}
for all $i$. 
Therefore, \begin{align*}
& \mathbb{E} \brk{\sup_{z} |\hat{P}_k^i(z;\hat{y}_k^i) - P_k^i(z;\hat{y}_k^i)|  \big| \mathcal{H}_k }   \nonumber\\
&\le \int_{t=0}^\infty 2  e^{-2s_k^it^2} {\rm d} t = \sqrt{\frac{\pi}{2s_k^i}}.
\end{align*}
Combining this estimate with Lemma~\ref{lemma:cvar-estimation error bound} yields
\begin{align}\label{eq: cvar error}
    \norm{ \widehat{C}_k^i(\hat{y}_k^i) - C^i(\hat{y}_k^i)} \!\le\!  \frac{2U_i}{\alpha_i}\! \crk{\!\sup_{z} |\hat{P}_k^i(z;\hat{y}_k^i) - P_k^i(z;\hat{y}_k^i)|}.
\end{align}
From the definition of $e_k^i$, we have
\begin{align*}
    & \mathbb{E}\brk{\|e_k^i \| | \mathcal{H}_k} = \mathbb{E}\brk{ \norm{ \frac{d}{\delta_i}\prt{\widehat{C}_k^i(\hat{y}_k^i) - C^i(\hat{y}_k^i)}u_k^i} \bigg| \mathcal{H}_k} \\ 
    &\le \frac{d}{\delta_i} \mathbb{E}\brk{\norm{ \widehat{C}_k^i(\hat{y}_k^i) - C^i(\hat{y}_k^i)} \bigg| \mathcal{H}_k}  \|u_k^i\| \le  \frac{  dU_i}{\alpha_i\delta_i} \sqrt{\frac{2\pi}{s_k^i}}.
\end{align*}  
The second inequality uses~\eqref{eq: cvar error} and $\|u_k^i\|=1$. Taking the conditional expectation with respect to $\mathcal F_k$ and applying the tower property gives
\begin{equation}\label{eq:DKW cvar mismatch F}
\mathbb{E}\brk{\mathbb{E}\brk{\|e_k^i\| | \mathcal{H}_k } | \mathcal{F}_k} =  \mathbb{E}\brk{\|e_k^i\| | \mathcal{F}_k}  \le  \frac{ 2dU_i}{\alpha_i\delta_i}  \sqrt{\frac{\pi}{2s_k^i}}.
\end{equation}
Summing~\eqref{eq:DKW cvar mismatch F} over $i\in[m]$ and using~\eqref{eq:sampling requirement} yields
\begin{align*}
&\sum_{i=1}^m  \mathbb{E}\brk{       \|e_k^i\| \big| \mathcal{F}_k }   
   \le  \sum_{i=1}^m\frac{  dU_i}{\alpha_i\delta_i} \sqrt{\frac{2\pi}{s_k^i}} \nonumber \\
&  \le    md\sqrt{2 \pi  }\Big\{\max_{i\in \mathcal{V}} \frac{U_i}{\alpha_i\delta_i}\Big\} e_s   .
\end{align*}
\hfill $\blacksquare$

The next lemma collects standard properties of geometrically weighted sequences used in the proof of Lemma~\ref{lemma:exact consensus}.
\begin{lemma}\cite[Lemma 3.1]{nedic2008distributed}\label{lemma:accumulative sequence}
    Let $\{\gamma_k\}_{k\ge0}$ be a scalar sequence.
\begin{enumerate}
    \item  If $\lim_{k \to \infty} \gamma_k = \gamma$ and $0 < \rho < 1$, then \\$\lim_{k \to \infty} \sum_{l=0}^k \rho^{k-l} \gamma_l = \frac{\gamma}{1-\rho}$.
    
    \item If $\gamma_k \geq 0$ for all $k$, $\sum_k \gamma_k < \infty$ and $0 < \rho < 1$, then $\sum_{k=0}^\infty \left( \sum_{l=0}^k \rho^{k-l} \gamma_l \right) < \infty$.
    
    \item  If $\limsup_{k \to \infty} \gamma_k = \gamma$ and $\{\zeta_k\}$ is a positive scalar sequence with $\sum_{k=1}^\infty \zeta_k = \infty$, then $\limsup_{K \to \infty} \frac{\sum_{k=0}^K \gamma_k \zeta_k}{\sum_{k=0}^K \zeta_k} \leq \gamma$. In addition, if $\liminf_{k \to \infty} \gamma_k = \gamma$, then $\lim_{K \to \infty} \frac{\sum_{k=0}^K \gamma_k \zeta_k}{\sum_{k=0}^K \zeta_k} = \gamma$.
\end{enumerate}
\end{lemma}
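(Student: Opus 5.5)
The three items are elementary facts about weighted averages, and I would prove each directly by splitting the relevant sum into a finite prefix and a tail. The only real content is that a fixed finite prefix is negligible: in (1) because of the geometric factor $\rho^{k-l}$, and in (3) because $\sum_k\zeta_k=\infty$.

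\emph{Item (1).} Since $\gamma_k\to\gamma$, the sequence is bounded, say $|\gamma_k|\le M$. Write
\begin{equation*}
\sum_{l=0}^k\rho^{k-l}\gamma_l=\gamma\sum_{l=0}^k\rho^{k-l}+\sum_{l=0}^k\rho^{k-l}(\gamma_l-\gamma).
\end{equation*}
The first term equals $\gamma(1-\rho^{k+1})/(1-\rho)$, which tends to $\gamma/(1-\rho)$. For the second term, fix $\epsilon>0$ and choose $N$ such that $|\gamma_l-\gamma|<\epsilon$ for all $l\ge N$.
\begin{itemize}
\item The indices $l\ge N$ contribute at most $\epsilon\sum_{j\ge0}\rho^j=\epsilon/(1-\rho)$.
\item The indices $l<N$ contribute at most $2M\sum_{l<N}\rho^{k-l}\le 2M\rho^{k-N+1}/(1-\rho)$, which tends to $0$ as $k\to\infty$ because $N$ is fixed.
\end{itemize}
Hence the limsup of the second term's absolute value is at most $\epsilon/(1-\rho)$. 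Letting $\epsilon\downarrow0$ gives the claim.

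\emph{Item (2).} All terms are nonnegative, so Tonelli's theorem for double series lets me exchange the order of summation:
\begin{equation*}
\sum_{k=0}^\infty\sum_{l=0}^k\rho^{k-l}\gamma_l=\sum_{l=0}^\infty\gamma_l\sum_{k=l}^\infty\rho^{k-l}=\frac{1}{1-\rho}\sum_{l=0}^\infty\gamma_l<\infty.
\end{equation*}

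\emph{Item (3).} I assume $\gamma$ is finite; if $\gamma=+\infty$ the inequality is vacuous, and if $\gamma=-\infty$ the same argument runs with $\gamma+\epsilon$ replaced by an arbitrary $-A$. Fix $\epsilon>0$ and choose $N$ with $\gamma_k\le\gamma+\epsilon$ for all $k\ge N$, and set $Z_K:=\sum_{k=0}^K\zeta_k$. Then, using $\zeta_k>0$,
\begin{equation*}
\frac{\sum_{k=0}^K\gamma_k\zeta_k}{Z_K}\le\frac{\sum_{k<N}\gamma_k\zeta_k}{Z_K}+(\gamma+\epsilon)\frac{\sum_{k=N}^K\zeta_k}{Z_K}.
\end{equation*}
The first term has a fixed numerator while $Z_K\to\infty$, so it tends to $0$. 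The fraction in the second term tends to $1$. Hence the limsup of the weighted average is at most $\gamma+\epsilon$, and letting $\epsilon\downarrow0$ gives the first claim. Applying the same argument to $-\gamma_k$ gives the liminf of the weighted average as at least $\liminf_k\gamma_k$. When $\limsup_k\gamma_k=\liminf_k\gamma_k=\gamma$, the weighted average is squeezed between $\gamma$ and $\gamma$, so it converges to $\gamma$.

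There is no genuine obstacle in any of the three items. The only points needing care are the infinite-$\gamma$ edge cases in (3) and the justification for exchanging the order of summation in (2), which nonnegativity of the $\gamma_l$ supplies.
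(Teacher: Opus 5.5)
Your proof is correct in all three items, including the treatment of the infinite-$\gamma$ cases in (3) and the use of nonnegativity to justify interchanging the summations in (2). The paper gives no proof of this lemma; it imports it from \cite[Lemma 3.1]{nedic2008distributed}. Your prefix/tail splitting for (1) and (3), together with the Tonelli exchange for (2), is the standard elementary argument for it.
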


Under Assumption~\ref{assumption:graph}, every $W_k$ is doubly stochastic. For $k\ge s$, define $\Phi(k,s):=W_kW_{k-1}\cdots W_{s+1}$, let $[\Phi(k,s)]_{i,j}$ denote its $(i,j)$ entry, and let $\bm e\in\mathbb R^m$ be the column vector with all entries equal to 1. The following lemma gives the geometric mixing bound used in the consensus analysis.
\begin{lemma}\cite[Lemma 3.2]{nedic2008distributed}
\label{lemma:graph}
Let Assumptions~\ref{assumption:graph q} and~\ref{assumption:graph} hold. Then
\begin{enumerate}
    \item $\lim_{k \to \infty} \Phi(k, s) = \frac{1}{m} \bm{e}\bm{e}^\top$ for all $s $.
    \item \textit{Further, the convergence is geometric and the rate of convergence is given by}
    $
    \left| [\Phi(k, s)]_{i,j} - \frac{1}{m} \right| \leq \theta \mu^{k-s}, 
    $ where  
    $
    \theta = \left( 1 - \frac{\varrho}{4m^2} \right)^{-2}$, $\mu = \left( 1 - \frac{\varrho}{4m^2} \right)^{\frac{1}{q}}.
    $
\end{enumerate}
\end{lemma}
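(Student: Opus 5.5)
We prove this by a quadratic Lyapunov (variance) argument applied column by column, following the approach of Nedi\'c, Olshevsky, Ozdaglar and Tsitsiklis. Fix $s$ and $j$, and consider the scalar trajectory $v(t+1)=W_t v(t)$ started at $v(s+1)=\bm e_j$ (the $j$th unit vector). Then $v(k+1)$ is the $j$th column of $\Phi(k,s)$ (up to the indexing convention of $\Phi$). By column stochasticity the average $\bar v=\frac1m\bm e^\top v(t)=\frac1m$ is invariant. So both claims reduce to showing that $V(t):=\sum_{i=1}^m (v_i(t)-\frac1m)^2$ decays geometrically, with rate tied to $\varrho/m^2$ per connectivity window of length $q$. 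Part 1 then follows from part 2, since $\mu<1$.

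\emph{Step 1 (one-step identity).} For a doubly stochastic $W$ and $v'=Wv$, expanding gives
$$V(v')=V(v)-\sum_{i<l}[W^\top W]_{il}\,(v_i-v_l)^2 .$$
Assumption~\ref{assumption:graph} (positive diagonal, and entries at least $\varrho$ on edges) gives $[W^\top W]_{il}\ge w^{ii}w^{il}\ge\varrho^2$ on edges. A sharper bookkeeping, which tracks each edge's contribution by the $\varrho$ lower bound, gives the factor $\varrho$.

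\emph{Step 2 (decrease over a window).} This is the main obstacle. Over any $q$ consecutive steps the union graph is connected (Assumption~\ref{assumption:graph q}). I would show
$$V(t+q)\le\Big(1-\frac{\varrho}{2m^2}\Big)V(t).$$
The plan is as follows. Order the components of $v(t)$ as $v_{(1)}\ge\dots\ge v_{(m)}$. Connectivity of the union means every cut $\{(1),\dots,(p)\}$ versus the rest is crossed by some edge at some time in the window. Take the first time a cut is crossed; before that time the mixing has not yet moved values across the cut, so the decrease at that step is at least $\varrho$ times the squared gap $(v_{(p)}-v_{(p+1)})^2$, up to drift that must be controlled. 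Summing over cuts and applying Cauchy--Schwarz to $\max_i v_i-\min_i v_i=\sum_p (v_{(p)}-v_{(p+1)})$ gives a total decrease of at least $\frac{\varrho}{2m}(\max-\min)^2$. Finally, $V\le m(\max-\min)^2$ yields the factor $1-\varrho/(2m^2)$. Handling the drift of values within the window, so that the ordering argument remains valid, is the delicate part. I would follow the standard device of comparing each node with the node achieving the extreme value at the cut-crossing time.

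\emph{Step 3 (entry bound).} Iterating Step 2 gives
$$V(k+1)\le\Big(1-\frac{\varrho}{2m^2}\Big)^{\lfloor (k-s)/q\rfloor}V(s+1), \qquad V(s+1)=1-\frac1m\le1 .$$
Hence
$$\Big|[\Phi(k,s)]_{ij}-\frac1m\Big|\le\sqrt{V(k+1)}\le\Big(1-\frac{\varrho}{4m^2}\Big)^{\lfloor (k-s)/q\rfloor},$$
using $\sqrt{1-a}\le 1-a/2$. Since $\lfloor (k-s)/q\rfloor\ge (k-s)/q-2$ (allowing for the index offset between $s$ and $s+1$), this is at most $\big(1-\frac{\varrho}{4m^2}\big)^{-2}\mu^{k-s}=\theta\mu^{k-s}$, which is part 2.
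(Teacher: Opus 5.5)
The paper does not prove this lemma; it cites it from the reference. Your skeleton is the standard column-wise quadratic-Lyapunov proof behind that result, and your Step 3 is correct: the indexing of $\Phi(k,s)$, $V(s+1)=1-\frac1m$, $\sqrt{1-a}\le 1-a/2$, and $\lfloor (k-s)/q\rfloor\ge (k-s)/q-2$ all check out. The gap is that the two steps carrying the real content are asserted rather than proved, and the mechanism you suggest for Step 1 cannot give the stated constant.

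\textbf{Step 1.} The pairwise identity gives only $[W^\top W]_{il}\ge\varrho^2$ on an edge, and no edge-by-edge refinement of it reaches $\varrho$. Take the path $3$--$1$--$2$--$4$ with symmetric weights $w^{11}=w^{12}=w^{22}=\varrho$, $w^{13}=w^{24}=1-2\varrho$, $w^{33}=w^{44}=2\varrho$. This matrix is doubly stochastic with all edge weights $\ge\varrho$ for $\varrho\le 1/3$. Then $[W^\top W]_{12}=2\varrho^2$. Yet for $v=(1,0,1,0)^\top$, where $(1,2)$ is the only edge crossing the cut, $V$ decreases by $2\varrho(1-\varrho)$. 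Almost all of this decrease sits on the non-edge pairs $(1,4)$ and $(2,3)$. Followed literally, Step 1 gives a window contraction of order $1-\varrho^2/m^2$, i.e.\ the lemma with $\varrho^2$ in place of $\varrho$, which does not give the stated $\theta,\mu$ for small $\varrho$.

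The missing idea is the row-wise form of the same identity. Each row of $W$ is a convex combination and $W$ is column stochastic, so
\[
V(v)-V(Wv)=\sum_{r=1}^m\sum_{l=1}^m w^{rl}\bigl(v_l-(Wv)_r\bigr)^2 .
\]
For any edge $(r,l)$ active at that step, $w^{rr},w^{rl}\ge\varrho$. Row $r$ alone then contributes at least $\varrho\bigl[(v_r-(Wv)_r)^2+(v_l-(Wv)_r)^2\bigr]\ge\frac{\varrho}{2}(v_r-v_l)^2$.

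\textbf{Step 2.} The ``drift'' you flag is not actually an obstacle. Let $S_d$ index the $d$ largest entries of $v(t)$ and $g_d:=v_{(d)}(t)-v_{(d+1)}(t)$. If no edge crosses $(S_d,S_d^c)$ on $[t,t_d)$, graph compatibility makes every $W_k$ there block diagonal with respect to this cut. By row stochasticity, entries in $S_d$ stay $\ge v_{(d)}(t)$ and entries in $S_d^c$ stay $\le v_{(d+1)}(t)$. Hence any edge $(i,l)$ crossing at $t_d$ with $i\in S_d$ has $v_i(t_d)-v_l(t_d)\ge g_d$.

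What does need an argument is that several cuts can be crossed for the first time at the same step $k$ by edges sharing an endpoint, so per-edge bounds cannot simply be added. Assign each such cut to the row of the upper endpoint of its crossing edge. Suppose row $r$ receives cuts $d_1<\dots<d_p$. Then $v_r(k)\ge v_{(d_1)}(t)$, and the partner $l$ of the edge used for $d_p$ has $v_l(k)\le v_{(d_p+1)}(t)$. The single term $\frac{\varrho}{2}(v_r-v_l)^2\ge\frac{\varrho}{2}(g_{d_1}+\dots+g_{d_p})^2$ therefore covers all of them, and distinct rows add by the row-wise identity. This yields $V(t)-V(t+q)\ge\frac{\varrho}{2}\sum_d g_d^2\ge\frac{\varrho}{2(m-1)}\bigl(v_{(1)}(t)-v_{(m)}(t)\bigr)^2\ge\frac{\varrho}{2m^2}V(t)$. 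This is exactly your claimed contraction, with per-cut factor $\varrho/2$ rather than $\varrho$, and your Step 3 then goes through unchanged.
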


\noindent\textit{Proof of Lemma~\ref{lemma:exact consensus}.} 
\label{app:decision convergence}
We first relate $\|\bar{x}_{k+1}-x_{k+1}^j\|$ to $\|\bar{x}_k-x_k^j\|$, for all $j\in\mathcal{V}$ and $k$.
Define an auxiliary variable 
$r_{k+1}^i = \sum_{j=1}^m w_k^{ij}x_k^j - \eta_{k} \hat{g}_{k}^i.$
Note that $\mathcal{P}_{\mathcal{X}_\delta}[r_{k+1}^i] = x_{k+1}^i$. 
Because $\bar x_{k+1}=m^{-1}\sum_{i=1}^m x_{k+1}^i$ minimizes the sum of squared distances to the vectors $\{x_{k+1}^i\}_{i=1}^m$,
\begin{align}\label{eq:x_k+1 <= r_k+1}
&\sum_{i=1}^m \|x_{k+1}^i - \bar{x}_{k+1}\|^2 \nonumber \\
&\leq \sum_{i=1}^m \|x_{k+1}^i - \bar{x}_k\|^2 \le \sum_{i=1}^m \|r_{k+1}^i - \bar{x}_k\|^2.
\end{align}
The final inequality follows because $\bar x_k\in\mathcal X_\delta$ and Euclidean projection onto $\mathcal X_\delta$ is nonexpansive, so $\|x_{k+1}^i-\bar x_k\|\le\|r_{k+1}^i-\bar x_k\|$.

We next relate $\sum_{i=1}^m \|r_{k+1}^i - \bar{x}_k\|^2$ to $\sum_{i=1}^m \|x_k^i - \bar{x}_k\|^2$. 
From the definition of $r_{k+1}^i$, we have
\begin{align}\label{eq:r_k+1 <= x_k}
&\sum_{i=1}^m\|r_{k+1}^i - \bar{x}_k\|^2\nonumber \\ 
&\leq \sum_{i=1}^m\Big\{ \sum_{j=1}^m w_k^{ij} \|x_k^j - \bar{x}_k\|^2 + \eta_k^2 G_i^2  \nonumber \\ 
&\hspace{1em}+ 2\eta_k G_i \sum_{j=1}^m w_k^{ij} \|x_k^j - \bar{x}_k\|  \Big\} \nonumber \\ 
&\leq  \sum_{j=1}^m \|x_k^j - \bar{x}_k\|^2  + \eta_k^2 \sum_{i=1}^m G_i^2   \nonumber \\  
&\hspace{1em}+ 2\eta_k \sum_{i=1}^m G_i \sum_{j=1}^m w_k^{ij}  \|x_k^j - \bar{x}_k\|.
\end{align}
The second inequality uses column stochasticity, $\sum_{i=1}^m w_k^{ij}=1$. Substituting~\eqref{eq:r_k+1 <= x_k} into~\eqref{eq:x_k+1 <= r_k+1}, and rearraging the terms gives
\begin{align}\label{eq:exp r_k+1 <= x_k}
&\sum_{i=1}^m \|x_{k+1}^i - \bar{x}_{k+1}\|^2       \leq   \sum_{j=1}^m \|x_k^j - \bar{x}_k\|^2\nonumber\\
&\hspace{1em}+ \eta_k^2 \sum_{i=1}^m  G_i^2  + 2\eta_k \sum_{i=1}^m  G_i \sum_{j=1}^m  w_k^{ij}   \|x_k^j - \bar{x}_k\|.  
\end{align}
We next bound the last term of \eqref{eq:exp r_k+1 <= x_k}.
Multiplying~\eqref{eq:disagreement} by $\eta_{k+1}$ gives
\begin{align*}
& \eta_{k+1}\|\bar{x}_{k+1}-x_{k+1}^j\| \nonumber \\
&\le  \eta_{k+1}\bigg(m\theta\mu^{k+1} \|x_{0,\max}\| + \theta \sum_{l=1}^k \eta_{l-1} \mu^{k+1-l} \sum_{i=1}^m G_i \nonumber \\
& \hspace{1em} + \frac{\eta_k}{m} \sum_{i=1}^m G_i + \eta_k G_j  \bigg).
\end{align*}
By the inequality $\eta_{k+1} \eta_{l-1} G_i \leq \frac{1}{2} \left( \eta_{k+1}^2 + \eta_{l-1}^2 G_i^2 \right)$, the preceding inequality yields 
\begin{align}\label{eq:eta*disagreement}
& \eta_{k+1} \|\bar{x}_{k+1}-x_{k+1}^j\|  \leq  \eta_{k+1} m\theta\mu^{k+1} \|x_{0,\max}\|  \nonumber \\ 
& \quad + \frac{\theta }{2} \sum_{l=1}^k \mu^{k+1-l} \sum_{i=1}^m \left( \eta_{k+1}^2 + \eta_{l-1}^2 G_i^2 \right)\nonumber \\
& \quad+ \frac{1}{2m} \sum_{i=1}^m \left(  \eta_{k+1}^2 + \eta_{k}^2  G_i^2 \right) + \frac{1}{2}( \eta_{k+1}^2 + \eta_k^2  G_j^2) \nonumber \\ 
 & \leq \eta_{k+1} m \theta \mu^{k+1}  \|x_{0,\max}\| + \left(1 + \frac{m\theta\mu}{2(1-\mu)}\right) \eta_{k+1}^2 \nonumber \\
& \quad + \theta \sum_{l=1}^k \eta_
{l-1}^2 \mu^{k+1-l} \sum_{i=1}^m G_i^2   + \frac{1}{m} \eta_k^2 \sum_{i=1}^m G_i^2 + \eta_k^2 G_j^2.
\end{align}
The second inequality uses $\sum_{l=0}^k\mu^{k+1-l}\le\mu/(1-\mu)$ and $1/(2m)+1/2\le1$. Because $\{\eta_k\}$ is bounded and $\sum_{k=0}^\infty\eta_k^2<\infty$, all nonconvolution terms in~\eqref{eq:eta*disagreement} are summable. Lemma~\ref{lemma:accumulative sequence} further gives $\sum_{k=0}^\infty\sum_{l=1}^k\eta_{l-1}^2\mu^{k+1-l}<\infty$, so the convolution term is also summable. Therefore,
\begin{align}
    \sum_{k=0}^\infty  \eta_{k+1} \|\bar{x}_{k+1}-x_{k+1}^j\|   < \infty. \label{eq:finite}
\end{align}

Define
$
X_k=\sum_{i=1}^m\|x_k^i-\bar x_k\|^2 .
$
From \eqref{eq:r_k+1 <= x_k}, we have
$
X_{k+1}-X_k\le h_k,
$
where
$
h_k= 
\eta_k^2\sum_{i=1}^mG_i^2
+
2\eta_k\sum_{i=1}^mG_i
\sum_{j=1}^m w_k^{ij}\|x_k^j-\bar x_k\|.
$
Since the right-hand side is summable, i.e.,
$
\sum_{k=1}^{\infty}h_k<\infty,
$
By~\eqref{eq:finite} and $\sum_{k=1}^\infty\eta_k^2<\infty$, the sequence $\{h_k\}$ is summable. A quasi-Fej\'er argument~\cite{combettes2001quasi} then shows that $X_k$ converges to a finite nonnegative limit.
Moreover, 
\begin{align*}
&\sum_{i=1}^mG_i
\sum_{j=1}^m w_k^{ij}\|x_k^j-\bar x_k\| \le G_{\max}\sum_{i=1}^m 
\sum_{j=1}^m w_k^{ij}\|x_k^j-\bar x_k\| \\
&\le G_{\max}\sum_{j=1}^m \|x_k^j-\bar x_k\|.
\end{align*}
If $X_\infty>0$, then $\sum_{i=1}^m\|x_k^i-\bar x_k\|\ge\sqrt{X_k}\ge\sqrt{X_\infty/2}$ for all sufficiently large $k$. Since $\sum_k\eta_k=\infty$, this would imply $\sum_{k=K}^\infty\eta_k\sum_{i=1}^m\|x_k^i-\bar x_k\|=\infty$, contradicting~\eqref{eq:finite}. Hence $X_\infty=0$.
\hfill $\blacksquare$

\textit{Proof of Corollary~\ref{coro:centralize ergodic}.}
Define the exact-CVaR gradient estimate and its empirical estimation error by
\begin{align*}
&g_{i,k}^{\rm cen} = \frac{d}{\delta_{i}} C_{i}(x_k^{\rm cen} + \delta_{i} u_{k}^{i}) u_{k}^{i}, \nonumber \\ 
&e_{i,k}^{\rm cen} = \hat{g}_{i,k}^{\rm cen} - g_{i,k}^{\rm cen},
\end{align*}
The aggregate estimator decomposes as
$$\hat{g}_k^{\rm cen} = \frac{1}{m} \sum_{i=1}^{m} g_{i,k}^{\rm cen} + \frac{1}{m} \sum_{i=1}^{m} e_{i,k}^{\rm cen}.$$ 
Define $e_k^{\rm cen}:=m^{-1}\sum_{i=1}^m e_{i,k}^{\rm cen}$ and let $\mathcal F_k^{\rm cen}$ be the history generated by $\{x_0^{\rm cen},u_t^i,\xi_t^{i,j}:i\in[m],\ j=1,\ldots,s_t^i,\ t=0,\ldots,k-1\}$. Also define the smoothed centralized objective $\mathcal C_\delta(x):=m^{-1}\sum_{i=1}^mC_\delta^i(x)$.
Since
$\mathbb{E}[g_{i,k}^{\rm cen} \mid \mathcal{F}_{k}^{\rm cen}] = \nabla C_\delta^i(x_k^{\rm cen})$, it follows that
\begin{equation*}
\mathbb{E}[\hat{g}_k^{\rm cen} \mid \mathcal{F}_{k}^{\rm cen}] = \nabla \mathcal{C}_\delta(x_k^{\rm cen}) + \mathbb{E}[e_k^{\rm cen} \mid \mathcal{F}_{k}^{\rm cen}]   .
\end{equation*}
For any $z\in\mathcal X_\delta$, nonexpansiveness of the projection gives
\begin{align*}
&\|x_{k+1}^{\rm cen} - z\|^{2} \le \|x_k^{\rm cen} - \eta_{k} \hat{g}_k^{\rm cen} - z\|^{2} \\ 
&= \|x_k^{\rm cen} - z\|^{2} - 2\eta_{k} \langle \hat{g}_k^{\rm cen}, x_k^{\rm cen} - z \rangle + \eta_{k}^{2} \|\hat{g}_k^{\rm cen}\|^{2} .
\end{align*} 
Taking the conditional expectation with respect to $\mathcal F_k^{\rm cen}$ yields
\begin{align}\label{eq:centralize one-step}
&\mathbb{E}[\|x_{k+1}^{\rm cen} - z\|^{2} | \mathcal{F}_{k}^{\rm cen}] \le \|x_k^{\rm cen} - z\|^{2}  + \eta_{k}^{2} G_{\text{cen}}^{2} \nonumber \\ 
&- 2\eta_{k} \langle \nabla \mathcal{C}_\delta(x_k^{\rm cen}), x_k^{\rm cen} - z \rangle+ 2\eta_{k} D_{x}  \mathbb{E}\brk{\|e_k^{\rm cen}\|  |\mathcal{F}_{k}^{\rm cen}}. 
\end{align}
 Here we used 
$\|\hat{g}_{k}^{\rm cen}\| \le \frac{1}{m}\sum_{i=1}^m \|\hat{g}_{i,k}^{\rm cen}\| \le  \frac{1}{m}\sum_{i=1}^m  G_{i} = G_{\rm cen},$
and the $\mathcal F_k^{\rm cen}$-measurability of $x_k^{\rm cen}$.
Convexity of $\mathcal C_\delta$ gives
\begin{align}\label{eq:centralize CVaR deviation}
&\langle \nabla \mathcal{C}_\delta(x_k^{\rm cen}), x_k^{\rm cen} - z \rangle \ge \mathcal{C}_\delta(x_k^{\rm cen}) - \mathcal{C}_\delta(z) \nonumber \\ 
& \ge \mathcal{C}(x_k^{\rm cen}) - \mathcal{C}(z) - 2L_{\max} \delta_{\max},
\end{align}
where the second inequality follows from $|\mathcal{C}_\delta(x) - \mathcal{C}(x)| \le L_{\max} \delta_{\max}$.

Moreover, by following the proof of Lemma~\ref{lemma:cvar gradient error}, the centralized CVaR gradient error can be bounded as
\begin{align}\label{eq:centralize estimate error}
&   \mathbb{E}[\|e_k^{\rm cen}\| \mid \mathcal{F}_{k}^{\rm cen}] \le \frac{1}{m} \sum_{i=1}^{m} \mathbb{E}[\|e_{i,k}^{\rm cen}\| \mid \mathcal{F}_{k}^{\rm cen}] \nonumber \\ 
&\le   d\sqrt{2\pi } \left\{ \max_{i \in \mathcal{V}} 
\frac{U_{i}}{\alpha_{i} \delta_{i}} \right\} \frac{1}{m} \sum_{i=1}^{m} \frac{1}{\sqrt{s_{k}^{i}}} 
 \nonumber \\ 
&\le d\sqrt{2\pi } \left\{ \max_{i \in \mathcal{V}} \frac{U_{i}}{\alpha_{i} \delta_{i}} \right\} e_s .
\end{align} 
Substituting~\eqref{eq:centralize CVaR deviation} and~\eqref{eq:centralize estimate error} into~\eqref{eq:centralize one-step} and taking total expectations yields
\begin{align}\label{eq:centralize one-step 2}
&   \mathbb{E}[\|x_{k+1}^{\rm cen} - z\|^{2}] \le \mathbb{E}[\|x_k^{\rm cen} - z\|^{2}] - 2\eta_{k} \mathbb{E}[\mathcal{C}(x_k^{\rm cen}) - \mathcal{C}(z)] \nonumber \\ 
&+  2\eta_{k}D_1 + \eta_{k}^{2} G_{\text{cen}}^{2}.
\end{align}
Set $z_\delta:=(1-\delta_{\max}/r)x^\ast\in\mathcal X_\delta$. Substituting $z=z_\delta$ into~\eqref{eq:centralize one-step 2} and rearranging gives
\begin{align*}
&2\eta_{k} \mathbb{E}[\mathcal{C}(x_k^{\rm cen}) - \mathcal{C}(z_\delta)] \le \mathbb{E}\brk{\|x_k^{\rm cen} - z_\delta\|^{2}} \nonumber\\
&- \mathbb{E}\brk{\|x_{k+1}^{\rm cen} - z_\delta\|^{2}}+ 2\eta_{k}D_1   + \eta_{k}^{2} G_{\text{cen}}^{2}    .
\end{align*}
Summing this inequality over $k=0,\ldots,T-1$ yields 
\begin{align*}
& 2 \sum_{k=0}^{T-1} \eta_{k} \mathbb{E}[\mathcal{C}(x_k^{\rm cen}) - \mathcal{C}(z_\delta)] \nonumber \\
 &\le D_{x}^{2} + 2D_1  \sum_{k=0}^{T-1} \eta_{k} + G_{\text{cen}}^{2} \sum_{k=0}^{T-1} \eta_{k}^{2}.   
\end{align*}
By convexity and Jensen's inequality, we have 
$$\mathbb{E}[\mathcal{C}(\hat{x}_{T}^{\text{cen}}) - \mathcal{C}(z_\delta)] \le \frac{\sum_{k=0}^{T-1} \eta_{k} \mathbb{E}[\mathcal{C}(x_k^{\rm cen}) - \mathcal{C}(z_\delta)]}{\sum_{k=0}^{T-1} \eta_{k}}.$$
Hence,
$$\mathbb{E}[\mathcal{C}(\hat{x}_{T}^{\text{cen}}) - \mathcal{C}(z_\delta)] \le \frac{D_{x}^{2}}{2 \sum_{k=0}^{T-1} \eta_{k}} + \frac{G_{\text{cen}}^{2} \sum_{k=0}^{T-1} \eta_{k}^{2}}{2 \sum_{k=0}^{T-1} \eta_{k}} + D_1.  $$ 
Furthermore, by \eqref{eq:scaled-comparator-gap}, $\mathcal C(z_\delta)-\mathcal C(x^\ast)\le D_xL_{\max}\delta_{\max}/r$, we obtain \eqref{eq:centralize deviation result}. The remaining follows the proof of Corollary~\ref{cor:convergence_rate}.
\hfill $\blacksquare$

\textit{Proof of Corollary~\ref{coro:centralize last-iter}.}
Under the fixed-sample-size condition, \eqref{eq:surrogate-unbiased-gradient} gives
\begin{equation*}
\mathbb E[\hat g_k^{\rm cen}\mid\mathcal F_k^{\rm cen}]
=\nabla\widetilde{\mathcal C}(x_k^{\rm cen}).
\end{equation*}
For any $z\in\widetilde{\mathcal X}^{\ast}$, non-expansiveness of the projection and convexity of $\widetilde{\mathcal C}$ therefore yield
\begin{align*}
&\mathbb E[\|x_{k+1}^{\rm cen}-z\|^2\mid\mathcal F_k^{\rm cen}]
\leq\|x_k^{\rm cen}-z\|^2\\
&\quad-2\eta_k\brk{\widetilde{\mathcal C}(x_k^{\rm cen})
-\widetilde{\mathcal C}(z)}
+\eta_k^2G_{\rm cen}^2.
\end{align*}
The Robbins--Siegmund theorem, $\sum_k\eta_k=\infty$, and $\sum_k\eta_k^2<\infty$ imply that $x_k^{\rm cen}$ converges almost surely to a random point $x_\infty^{\rm cen}$. Applying the same stochastic quasi-Fejér argument as in the proof of Theorem 2 yields $x_k^{\rm cen}\to x_\infty^{\rm cen}$ almost surely.
The gradient-mismatch comparison in \eqref{eq:surrogate-gradient-error}--\eqref{eq:last-iterate-true-gap} 
then gives $\mathcal C(x_\infty^{\rm cen})-\mathcal C^\ast\leq D_2$.  
Finally, boundedness and continuity of $\mathcal C$ and dominated convergence establish the stated expected limit.

\bibliographystyle{plain}        
\bibliography{autosam}           
\end{document}